\newtheorem{thm}{Theorem}[section]
\newtheorem{prop}{Proposition}
\newtheorem{lem}{Lemma}
\theoremstyle{definition}
\theoremstyle{remark}
\newtheorem{rem}[thm]{Remark}
\numberwithin{equation}{section}
\begin{document}
\title[Stabilization of Piezoeletric Beam with  Coleman or Pipkin Gurtin Law]{Stabilization of Piezoeletric Beam  with Coleman or Pipkin Gurtin Thermal Law and under Lorenz Gauge Condition}

\author{Dounya Kechiche$^{*,\textrm{\Letter}}$ and Ammar Khemmoudj$^{*}$}

\address{D. Kechiche$^{*}$: Department of Mathematics, University of Science and Technology Houari Boumediene, Algiers, Algeria.}
\email{kechichdounia19@gmail.com$^{\textrm{\Letter}}$}
\email{akhemmoudj@yahoo.fr}
\newcommand{\Addresses}





\maketitle

\begin{abstract}
In this paper, we present the analysis of stability for a piezoelectric beam subject to a thermal law (Coleman-Gurtin or Gurtin-Pipkin thermal law) adding some viscous damping mechanism to the electric field in $x-$direction and $z-$direction, and we discuss several cases. Then, there is no need to control the electrical field components in $x$-direction and $z-$ direction to establish an exponential decay of solutions when the beam is subjected to a Coleman-Gurtin law, otherwise a polynomial stability is established with Gurtin-Pipkin thermal law in case when the electrical field components are damped. \\
\smallskip
\noindent \textbf{Keywords.}  Piezoelectric beam, Coleman or Pipkin Gurtin thermal law, Lorenz gauge condition.
\end{abstract}
\smallskip
\begingroup
 \hypersetup{linkcolor=blue} 
 \tableofcontents 
\endgroup

\section{Introduction}
\par Piezoeletric beams are structural elements made from materials that exhibit piezoelectric properties. These materials such as  quartz, lead zirconate titante (PZT), polyvinglidene fluoride (PVDF)...etc, are substances that can generate an electric charge when subjected to a mechanical stress, and conversely, they can deform when electric field is applied to them.
This unique property make them ideal for various applications in sensing, actuation, and energy harvesting.  Some of the lattest cutting-edge applications involve: piezoelectric transistor,  structural health monitoring (biogradable piezelectric pressure sensor), energy harvesting (A Shoe Sole), cardiac pacemakers, course-changing bullets...etc.\\
There are mainly three ways to electrically actuate piezoelectric materials: supplying voltage, current or charge to its electrodes. 
  Morris and Özer in \cite{19} considered a fully dynamic magnetic  piezoelectric beam with voltage-driven
electrodes, and they proved that for almost all choices of system parameters
a simple electrical feedback yields strong stability. However,
for other choices of parameters this is not the case.\\
In \cite{2}, Akil  investigated the stabilization of a system of piezoelectric beams under (Coleman or Pipkin)–Gurtin
thermal law with magnetic effect, he proved that the piezoelectric Coleman–Gurtin system is exponentialy stable and the piezoelectric Gurtin–Pipkin system has a polynomial energy decay rate of type $t^{-1}$. \\
Morris and Özer in \cite{20}  used
variational approach  to derive a model for a piezoelectric beam that includes magnetic effects.
It is proven that the partial differential equation model is well-posed. The presence of magnetic effects significantly influences the stability of the control system. For almost all system parameters the piezoelectric
beam can be strongly stabilized, but is not exponentially stable in the energy space. Strong
stabilization is achieved using only electrical feedback. Furthermore, using the same electrical feedback, an exponentially stable closed-loop system can be obtained for a set of system parameters of
zero Lebesgue measure. These results are then compared to those obtained from a beam model devoid of magnetic effects.\\
An et Al. in \cite{6} studied the stability of a piezoelectric beams with magnetic
effects of fractional derivative type and with/ without thermal effects of Fourier’s law; they obtained an
exponential stability by taking two boundary fractional dampings and additional thermal effect.\\
 The biggest disadvantage of voltage-controlled piezoelectric materials is that
there is a huge hysteresis observed between the applied voltage  and the resulting mechanical strain.  A newly developed structure has been considered that is current or charge control that leads to less hysteresis e.g. \cite{10,12}. Then,
 it's worth mentioning the work of  Özer in \cite{21}, the author considered
a piezoelectric beam, that is, an elastic beam covered by
a piezo-electric material on its top and bottom surfaces,
insulated at the edges (to prevent fringing effects), and
connected to an external electric circuit to feed either voltage, current or charge to the electrodes.
 The author introduced a fully-dynamic and electrostatic/quasi-static current-controlled PDE
models obtained by a consistent variational approach. The novelty of the models
is due to the inclusion of electro-magnetic effects through scalar electric and vector
magnetic potentials, using  Lorenz gauge and Coulomb gauge models to provide a well-posed
state-space formulations in the corresponding energy spaces with compatibility conditions. The author proved that this model fails to be asymptotically stable if the material
parameters satisfy certain conditions. To achieve at least asymptotic stability the author
proposed an additional controller. Afilal et Al. in  \cite{1} considered a one-dimensional dissipative system of piezoelectric beams with magnetic effect and localized damping. They proved that the system is exponentialy stable using a damping mechanism acting only on one component and on a small part of the beam.\\
Akil et Al. in \cite{3} worked on the same problem of  Özer in \cite{21} but with viscous damping terms (depending on the parameters a, b and c) acting
on the stretching of the centerline of the beam in $x$- direction, electrical field component in $x$- direction, electrical field component in $z$- direction and magnetic field
component in y-direction.  The authers proved the strong stability as well as the exponential
stability. The terms added to the system with the parameters $a,b$ and $c$ all correspond
to different types of distributed electro-magnetic controllers (We recommend reading  the work of  Özer in \cite{21} for more details). Roughly speaking, they worked on the following system:
\begin{small}
\begin{equation}\label{S1.1}
\left\{\begin{tabular}{ll}
$\rho v_{tt}-\alpha v_{xx}-\gamma (\phi+\eta_{t})_{x}+a v_{t}=0,$ &\;\;in
\;\; $(0,L) \times (0,+\infty)$, \\
$-\xi (\phi_{x}+\theta_{t})_{x}+(\eta_{t}+\phi)-\frac{\gamma}{\epsilon_{3}}v_{x}=0,$& \;\;in $\Omega \times (0,+\infty),$\\
$(\theta_{t}+\phi_{x})_{t}-\frac{\mu}{\xi \epsilon_{3}}(\eta_{x}-\theta)+b(\theta_{t}+\phi_{x})=0,$& \;\; in $\Omega \times (0,+\infty),$\\
$(\eta_{t}+\theta)_{t}-\frac{\mu}{\epsilon_{3}}(\eta_{x}-\theta)_{x}-\frac{\gamma}{\epsilon_{3}}v_{tx}
+c(\eta_{t}+\phi)=0,$& \;\; in $\Omega \times (0,+\infty),$\\
$v(0,t)=\alpha v_{x}(L,t)+\gamma\phi(L,t)+\gamma \eta_{t}(L,t)=0$,&\;\;  $t\in (0,\infty),$\\
$\xi\epsilon_{3} (\theta_{t}+\phi_{x})(0,t)=\xi \epsilon_{3} (\theta_{t}+\phi_{x})(L,t)=0,$& \;\;$ t\in (0,\infty),$\\
$\eta (\theta-\eta_{x})(0,t)=\eta(\theta-\eta_{x})(L,t)=0.$&\;\; $t\in (0,\infty),$
\end{tabular}\right.
\end{equation}
\end{small}
where  $a, b, c \geq 0$. The functions  $v,\theta_{t}+\phi_{x},\eta_{t}+\phi$ and $\theta-\eta_{x}$ represents respectively, the stretching of the
centerline of the beam in x-direction, electrical field component in x-direction, electrical field component in z-direction and magnetic field component in y-direction, and
$\xi= \frac{\epsilon_{1}h_{2}}{12\epsilon_{3}}$ . The natural physical constants $\rho, \alpha, \gamma, \epsilon_{1}, \epsilon_{3}, \mu $ denote the mass density
per unit volume, elastic stiffness, piezoelectric coupling coefficient,  permittivity in x
and z directions, and magnetic permeability respectively. Using Lorenz gauge condition, they reformulated the system to achieve the existence and uniqueness of 
solution. Next, by using general criteria of Arendt–Batty, they proved the strong stability in different cases and they proved that it is sufficient to control the stretching of the
center-line of the beam in x-direction to achieve the exponential stability. Numerical
results are also presented to validate the theoretical result. \\
In our paper, we consider a one dimensional piezoelectric beam coupled with  thermal law (Coleman-Gurtin or Gurtin-Pipkin thermal law), then for $m\in[0,1]$ we introduce our problem as follows:
\begin{small}
\begin{equation}\label{S1.1}
\left\{\begin{tabular}{ll}
$\rho v_{tt}-\alpha v_{xx}-\gamma (\phi+\eta_{t})_{x}+a w_{x}=0,$&\;\;in
 $(0,L) \times (0,+\infty)$, \\
$-\xi (\phi_{x}+\theta_{t})_{x}+(\eta_{t}+\phi)-\frac{\gamma}{\epsilon_{3}}v_{x}=0,$&\;\;in $\Omega \times (0,+\infty),$\\
$(\theta_{t}+\phi_{x})_{t}-\frac{\mu}{\xi \epsilon_{3}}(\eta_{x}-\theta)+b(\theta_{t}+\phi_{x})=0,$&\;\;in  $\Omega \times (0,+\infty),$\\
$(\eta_{t}+\theta)_{t}-\frac{\mu}{\epsilon_{3}}(\eta_{x}-\theta)_{x}
-\frac{\gamma}{\epsilon_{3}}v_{tx}+c(\eta_{t}+\phi)=0,$&\;\; in  $\Omega \times (0,+\infty),$\\
$w_{t}-d(1-m)w_{xx}-dm\int_{0}^{+\infty}g(s)w_{xx}(x,t-s)ds+a v_{xt}=0,$&\;\; in  $\Omega \times (0,+\infty),$\\
$v(0,t)=\alpha v_{x}(L,t)+\gamma\phi(L,t)+\gamma \eta_{t}(L,t)=w(0,t)=w(L,t)=0$,&\;\; $t\in (0,\infty)$,\\
$\xi\epsilon_{3} (\theta_{t}+\phi_{x})(0,t)=\xi \epsilon_{3}(\theta_{t}+\phi_{x})(L,t)=0,$ &\;\;$ t\in (0,\infty),$\\
$\eta (\theta-\eta_{x})(0,t)=\eta(\theta-\eta_{x})(L,t)=0,$&\;\; $ t\in (0,\infty),$
\end{tabular}\right.
\end{equation}
\end{small}
where $a,d>0$ and $b,c\geq0$. We denote by $\varphi_{0}$ the initial past history of $w$, then we have
\begin{equation*}
  w(x,0)=w_{0}(x),\;\;\; w(x,-s)=\varphi_{0}(x,s), \;\;\; \mathrm{for} \;\; s>0, \;\; x\in (0,L).
\end{equation*}
The kernel function $g:[0,+\infty[\rightarrow [0,+\infty[$ is a convex integrable function (non-increasing and vanishing at infinity), taking the form
\begin{equation*}
  g(s)=\int_{s}^{\infty}\sigma(r)dr, \;\; s\geq0,
\end{equation*}
where $\sigma:(0,\infty)\rightarrow [0,\infty)$ is a memory kernel $\sigma$  required to satisfy the following hypothesis
\begin{equation*}\label{C10}
\left\{\begin{tabular}{ll}
$\sigma \in L^{1}(0,\infty)\cap \mathcal{C}^{1}(0,\infty)$ with $\int_{0}^{\infty}\sigma(s)ds=g(0)>0,$&\\
$\sigma(0)=\lim\limits_{s\rightarrow 0}\sigma(s)<\infty,$&\\ 
$\sigma$ satisfies the Dafermos condition $\sigma^{\prime}(s)\leq -d_{\sigma}\sigma(s).$&
\end{tabular}\right.(\mathrm{\mathbf{H}})
\end{equation*}
Speaking on Non-Fourier heat flux laws, we mention some relevant  works; we cite Akil, M. et al (\cite{4}) in 2023,  An, Y. et al. (\cite{6}) in  2021,  Dell’Oro, F. (\cite{13}) in 2014 and (\cite{14}) in 2021,  V.V. Chepyzhov et al. (\cite{9}) in 2006;  Zhang, Q. et al. (\cite{25}) in 2014, Eremenko, A. et al. (\cite{11}) in 2011, Giorgi, C. et al. (\cite{15}), in 2001. We add some works for models with memory in past history framework as Akil, M et al. in (\cite{5}), in 2021; Liu, Z et al. (\cite{18}), in 1996 and Rivera JEM et al. (\cite{24}) in 2008. \\
\par As already mentioned, in this article, we study the stabilization of the piezoelectric beam with Coleman-Gurtin or Gurtin-Pipkin thermal law and under Lorenz gauge condition, discussing several cases depending  on the damping mechanism of the electrical field components. The aim of this study is to show that without viscous dampings on the electrical field components, the semigroup associated to the system of equations (\ref{S1.1}) with Coleman-Gurtion thermal law still exponentially stable, then it is suffices to consider the thermic damping of type Coleman-Gurtin to establish an exponential decay of solutions.  In the second section, we give a reformulation of the system (\ref{S1.1}) into a well-posed system, using a Lorenz gauge condition. In the third section, we give an exponential stability result of the system, in case the piezoelectric beam is only subjected to a Coleman-Gurtin thermal law. A polynomial stability of rate $t^{-1}$ is discussed in the last section in case of Gurtin-Pipkin law. We finish the paper with a conclusion.
\section{Reformulation and well-Posedness of the problem}
In this section, we aim to decouple the equations of electromagnetism in the stretching system by 
employing a specific gauge condition, this allows us to reformulate the problem as a well-posed problem. 
One commonly used gauge condition is the Lorenz gauge, which can be expressed by the following equation:
\begin{equation}\label{C1}
  -\xi \theta_{x}+\eta=\frac{\xi \epsilon_{3}}{\mu}\phi_{t},
\end{equation}
with the boundary conditions
\begin{equation*}
  \theta(0,t)=\theta(L,t)=0.
\end{equation*}
In this case, the terms $-\xi\theta_{tx}+\eta_{t}$ in $(\ref{S1.1})_{2}$ transformed into $\frac{\xi \epsilon_{3}}{\mu}\phi_{tt}$, as well, the terms $\phi_{tx}-\frac{\mu}{\xi\epsilon_{3}}(\eta_{x}-\theta)$ and $\phi_{t}-\frac{\eta}{\epsilon_{3}}(\eta_{x}-\theta)_{x}$ in $(\ref{S1.1})_{3}$ and $(\ref{S1.1})_{4}$ are transformed into $-\frac{\mu}{\xi\epsilon_{3}}(\xi\theta_{xx}-\theta)$ and $-\frac{\mu}{\xi \epsilon_{3}}(\xi \eta_{xx}-\eta)$, respectively.  \\
On the other side, we reformulate the problem (\ref{S1.1}) using the history framework of Dafermos. To this end, for $s>0$, we consider the auxiliary function
\begin{equation*}
  \kappa(x,s)=\int_{0}^{s}w(x,t-r)dr, \;\; x\in (0,L), \;\; s >0.
\end{equation*}
Then, for $m \in [0,1]$ we can write the system (\ref{S1.1}) into the following form
 \begin{small}
 \begin{equation}\label{C2}
\left\{\begin{tabular}{ll}
$\rho v_{tt}-\alpha v_{xx}-\gamma (\phi+\eta_{t})_{x}+a w_{x}=0,$ &\;\;in
 $(0,L) \times (0,+\infty)$, \\
$\phi_{tt}-\frac{\mu}{\epsilon_{3}}\phi_{xx}+\frac{\mu}{\xi \epsilon_{3}}\phi-\frac{\gamma \mu }{\xi \epsilon_{3}^{2}}v_{x}=0,$&\;\; in $(0,L) \times (0,+\infty),$\\
$\theta_{tt}-\frac{\mu}{\epsilon_{3}}\theta_{xx}+\frac{\mu}{\xi \epsilon_{3}}\theta+b(\theta_{t}+\phi_{x})=0,$&\;\; in $(0,L) \times (0,+\infty),$\\
$\eta_{tt}-\frac{\mu}{\epsilon_{3}}\eta_{xx}+\frac{\mu}{\xi \epsilon_{3}}\eta-\frac{\gamma}{\epsilon_{3}}v_{tx}+c(\eta_{t}+\phi)=0,$&\;\; in $(0,L) \times (0,+\infty),$\\
$w_{t}-d(1-m)w_{xx}-dm\int_{0}^{+\infty}\sigma(s)\kappa_{xx}(s)ds+a v_{xt}=0,$&\;\; in $(0,L) \times (0,+\infty),$\\
$\kappa_{t}+\kappa_{s}-w=0,$\;\; in $(0,L) \times (0,+\infty)$,\\
$v(0,t)=\alpha v_{x}(L,t)+\gamma\phi(L,t)+\gamma \eta_{t}(L,t)=w(0,t)=w(L,t)=0$,&\;\;  $t\in (0,\infty),$\\
$\phi_{x}(0,t)=\phi_{x}(L,t)=\eta_{x}(0,t)=\eta_{x}(L,t)=\theta(0,t)=\theta(L,t)=0$,& \;\;$ t\in (0,\infty),$\\
$(v,\phi,\theta,\eta,v_{t},\phi_{t},\theta_{t},\eta_{t})(\cdot,0)
=(v^{0},\phi^{0},\theta^{0},\eta^{0},v^{1},\phi^{1},\theta^{1},\eta^{1}),$&\;\;
$x\in (0,L).$
\end{tabular}\right.
\end{equation}
\end{small}
We give in the following lemma the energy functional of the system (\ref{C2}):
\begin{lem}
  The energy associated to the system (\ref{C2}) is given by the following sum of energies
  \begin{equation*}
  E(t)=E_{k}(t)+E_{p}(t)+E_{B}(t)+E_{elec}(t)+E_{m}(t),
  \end{equation*}
  where
  \begin{equation}
  \begin{tabular}{ll}
  $E_{k}(t)=\frac{\rho}{2}\int_{0}^{L}|v_{t}|^{2}dx$,\;\; $E_{p}(t)=\frac{\alpha}{2}\int_{0}^{L}|v_{x}|^{2}dx$,\;\; $E_{B}(t)=\frac{\mu}{2}\int_{0}^{L}|\theta-\eta_{x}|^{2}dx,$&\\
  $E_{elec}(t)=\frac{1}{2}\int_{0}^{L}[\xi \epsilon_{3}|\theta_{t}+\phi_{x}|^{2}+\epsilon_{3}|\eta_{t}+\phi|^{2}]dx$,&\\
   $E_{m}(t)=\frac{1}{2}\int_{0}^{L}|w|^{2}dx+\frac{md}{2}\int_{0}^{\infty}\int_{0}^{L}\sigma(s)|\eta_{x}|^{2}dsdx$.
  \end{tabular}
  \end{equation}
  and 
  \begin{eqnarray*}
\frac{d}{dt}E(t)&=&-b\xi\epsilon_{3}\int_{0}^{L}|\theta_{t}+\phi_{x}|^{2}dx
-c\epsilon_{3}\int_{0}^{L}|\eta_{t}+\phi|^{2}dx+(m-1)d\int_{0}^{L}|w_{x}|^{2}dx\\
&&+md\int_{0}^{\infty}\int_{0}^{L}\sigma^{\prime}(s)|\kappa_{x}|^{2}dxds.
\end{eqnarray*}
\end{lem}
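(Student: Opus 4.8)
The statement is a standard energy identity, and the plan is to establish it by the multiplier method: differentiate the five pieces of $E(t)$ in $t$, use $(\ref{C2})$ to eliminate $v_{tt}$, $\theta_{tt}$, $\eta_{tt}$, $w_{t}$ and $\kappa_{t}$ — together with the Lorenz gauge $(\ref{C1})$ to handle the time derivatives of $\phi$ — integrate by parts in $x$ (and, for the history piece, also in $s$), and check that every boundary contribution vanishes while every coupling term cancels against its partner, leaving exactly the four dissipative terms claimed.

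I would first treat the mechanical part. Multiplying $(\ref{C2})_{1}$ by $v_{t}$ and integrating over $(0,L)$ yields $\frac{d}{dt}(E_{k}+E_{p})$ together with the boundary term $v_{t}(L,t)\big(\alpha v_{x}(L,t)+\gamma\phi(L,t)+\gamma\eta_{t}(L,t)\big)$, which vanishes by the natural boundary condition, and the coupling terms $-\gamma\int_{0}^{L}v_{tx}(\phi+\eta_{t})\,dx$ and $a\int_{0}^{L}v_{tx}w\,dx$ (the remaining boundary terms drop out because $v_{t}(0,t)=0$ and $w(0,t)=w(L,t)=0$). For the electromagnetic part I would use the gauge $(\ref{C1})$, differentiated once in $x$, to rewrite the combinations occurring in $E_{elec}$: from $(\ref{C2})_{3}$ one obtains $\theta_{tt}+\phi_{xt}=\frac{\mu}{\xi\epsilon_{3}}(\eta_{x}-\theta)-b(\theta_{t}+\phi_{x})$, and from $(\ref{C2})_{4}$ one obtains $\eta_{tt}+\phi_{t}=\frac{\mu}{\epsilon_{3}}(\eta_{x}-\theta)_{x}+\frac{\gamma}{\epsilon_{3}}v_{tx}-c(\eta_{t}+\phi)$. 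Multiplying these by $\xi\epsilon_{3}(\theta_{t}+\phi_{x})$ and $\epsilon_{3}(\eta_{t}+\phi)$, integrating, and using $\phi_{x}=\eta_{x}=\theta=0$ at $x=0,L$ to discard all boundary terms, the terms carrying the factor $\mu(\eta_{x}-\theta)$ combine with $\frac{d}{dt}E_{B}$ and telescope to zero, leaving $\frac{d}{dt}(E_{B}+E_{elec})=-b\xi\epsilon_{3}\int_{0}^{L}|\theta_{t}+\phi_{x}|^{2}\,dx-c\epsilon_{3}\int_{0}^{L}|\eta_{t}+\phi|^{2}\,dx+\gamma\int_{0}^{L}v_{tx}(\eta_{t}+\phi)\,dx$. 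Adding this to $\frac{d}{dt}(E_{k}+E_{p})$, the $\gamma$-coupling terms cancel and the only non-dissipative survivor is $a\int_{0}^{L}v_{tx}w\,dx$.

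It remains to handle the thermal/memory part. Multiplying $(\ref{C2})_{5}$ by $w$ and integrating by parts in $x$ (using $w(0,t)=w(L,t)=0$) gives $\frac{1}{2}\frac{d}{dt}\int_{0}^{L}|w|^{2}\,dx=(m-1)d\int_{0}^{L}|w_{x}|^{2}\,dx-dm\int_{0}^{\infty}\sigma(s)\int_{0}^{L}\kappa_{x}w_{x}\,dx\,ds-a\int_{0}^{L}v_{tx}w\,dx$. Differentiating the history term of $E_{m}$, substituting $\kappa_{t}=w-\kappa_{s}$ from $(\ref{C2})_{6}$, and integrating by parts in $s$ produces a convolution term that exactly cancels the one just obtained from $(\ref{C2})_{5}$, together with the dissipative term $md\int_{0}^{\infty}\int_{0}^{L}\sigma^{\prime}(s)|\kappa_{x}|^{2}\,dx\,ds$ coming from the $s$-derivative falling on $\sigma$ (the $s$-boundary term at $s=0$ vanishes since $\kappa(\cdot,0)\equiv 0$). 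The surviving $-a\int_{0}^{L}v_{tx}w\,dx$ then cancels the last non-dissipative term left over from the previous step, and summing the five contributions gives the asserted formula for $\frac{d}{dt}E(t)$.

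The step I expect to be the main obstacle is the rigorous treatment of the history variable $\kappa$: one must justify the integration by parts in $s$ against the weight $\sigma$ and, crucially, that the boundary term at $s\to\infty$ disappears. This is precisely where hypothesis $(\mathbf{H})$ — in particular the Dafermos condition $\sigma^{\prime}(s)\le-d_{\sigma}\sigma(s)$ — is used, since it forces the exponential decay of $\sigma$ and, combined with the fact that a solution lives in the weighted history space (so that $\int_{0}^{\infty}\sigma(s)\int_{0}^{L}|\kappa_{x}(x,s)|^{2}\,dx\,ds<\infty$), makes $\sigma(s)\int_{0}^{L}|\kappa_{x}(x,s)|^{2}\,dx$ vanish as $s\to\infty$. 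The rest is the usual bookkeeping of matching up the $\gamma$- and $\mu$-cross terms among $v$, $\phi$, $\theta$ and $\eta$, which goes through cleanly once the gauge relation $(\ref{C1})$ is used consistently to eliminate the time derivatives of $\phi$.
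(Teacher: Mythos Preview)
Your proposal is correct and follows essentially the same multiplier approach as the paper: multiply $(\ref{C2})_{1}$ by $v_{t}$, the electromagnetic equations by $\xi\epsilon_{3}(\theta_{t}+\phi_{x})$ and $\epsilon_{3}(\eta_{t}+\phi)$, the heat equation by $w$, the history equation (after an $x$-derivative) by $md\,\sigma(s)\kappa_{x}$, integrate, and sum so that all cross terms cancel. Your write-up is in fact a bit more explicit than the paper's --- you spell out why each boundary term drops (including the $s\to\infty$ term via hypothesis $(\mathbf{H})$) and why the $\gamma$-, $\mu$- and $a$-couplings pair off, whereas the paper defers part of the electromagnetic bookkeeping to the reference \cite{3}.
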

\begin{proof}
Multiplying $(\ref{C2})_{1}$ by $v_{t}$, integrating by parts over $(0,L)$, then taking the real part, we obtain
\begin{equation}\label{C9}
\frac{d}{dt}E_{k}(t)+\frac{d}{dt}E_{p}(t)+\Re(\gamma\int_{0}^{L}(\phi+\eta_{t})\overline{v}_{xt}dx)+a \Re(\int_{0}^{L}w_{x}\overline{v}_{t}dx)=0.
\end{equation}
Multiplying $(\ref{C2})_{3}$, $(\ref{C2})_{4}$ by $\xi \epsilon_{3}(\overline{\theta_{t}+\phi_{x}})$, and $\epsilon_{3}(\overline{\eta_{t}+\phi})$, respectively, then operating as in Akil et Al.  (\cite{3}), we get
\begin{eqnarray}\label{C7}
  \frac{\xi \epsilon_{3}}{2}\frac{d}{dt}\int_{0}^{L}|\theta_{t}+\phi_{x}|^{2}dx +\frac{\mu}{2}\frac{d}{dt}\int_{0}^{L}|\theta|^{2}dx\notag\\
  -\Re(\mu\int_{0}^{L}\eta_{x}\overline{\theta}_{t}dx)-\Re(\mu\int_{0}^{L}\eta_{x}\overline{\phi}_{x}dx)\notag\\
  +\Re(\mu\int_{0}^{L}\theta \overline{\phi}_{x}dx)+b\xi \epsilon_{3}\int_{0}^{L}|\theta_{t}+\phi_{x}|^{2}dx&=& 0
\end{eqnarray}
and
\begin{eqnarray}\label{C8}
\frac{\epsilon_{3}}{2}\frac{d}{dt}\int_{0}^{L}|\eta_{t}+\phi|^{2}dx+\frac{\mu}{2}\frac{d}{dt}\int_{0}^{L}|\eta_{x}|^{2}dx
-\Re(\mu \int_{0}^{L}\eta \overline{\eta}_{xt}dx)\notag\\
+\Re(\mu \int_{0}^{L}\eta_{x}\overline{\phi}_{x}dx)
-\Re(\mu \int_{0}^{L}\theta \overline{\phi}_{x}dx)\notag\\
-\Re(\gamma \int_{0}^{L}v_{xt}(\overline{\phi+\eta_{t}})dx)+c\epsilon\int_{0}^{L}|\eta_{t}+\phi|^{2}dx&=&0.
\end{eqnarray}
Now, multiplying $(\ref{C2})_{5}$ by $\overline{w}$, integrating by parts over $(0,L)$, we get
\begin{eqnarray}\label{C5}
  \frac{1}{2}\frac{d}{dt}\int_{0}^{L}|w|^{2}dx+(1-m)d\int_{0}^{L}|w_{x}|^{2}dx\notag\\
  +\Re(md\int_{0}^{L}\int_{0}^{\infty}\sigma(s)\kappa_{x}(s)\overline{w}_{x}dsdx)
  -a\int_{0}^{L}v_{t}\overline{w}_{x}dx&=&0.
\end{eqnarray}
We differentiate the equation $(\ref{C2})_{6}$ with respect to $x$, we get
\begin{equation}\label{C3}
  \kappa_{xt}+\kappa_{xs}-w_{x}=0.
\end{equation}
Multiplying  (\ref{C3}) by $dm\sigma(s)\overline{\kappa}_{x}$, integrating over  $(0,L)\times (0,\infty)$, we obtain
\begin{eqnarray}\label{C4}
\frac{1}{2}\frac{d}{dt}md\int_{0}^{L}\int_{0}^{\infty}\sigma(s)|\kappa_{x}|^{2}dsdx\notag\\
-\frac{md}{2}\int_{0}^{L}\int_{0}^{\infty}\sigma^{\prime}(s)|\kappa_{x}|^{2}dsdx
&=&\Re(md\int_{0}^{L}\int_{0}^{\infty}\sigma(s)\kappa_{x}\overline{w}_{x}dsdx).
\end{eqnarray}
Inserting (\ref{C3}) in (\ref{C5}), we obtain
\begin{equation}\label{C6}
\frac{d}{dt}E_{m}(t)-a\int_{0}^{L}v_{t}\overline{w}_{x}dx=(m-1)d\int_{0}^{L}|w_{x}|^{2}dx
+\frac{md}{2}\int_{0}^{L}\int_{0}^{\infty}\sigma^{\prime}(s)|\kappa_{x}|^{2}dsdx.
\end{equation}
Summing the result (\ref{C9}), (\ref{C7}), (\ref{C8}), (\ref{C5}) and (\ref{C6}), we obtain
\begin{eqnarray*}
\frac{d}{dt}E(t)&=&-b\xi\epsilon_{3}\int_{0}^{L}|\theta_{t}+\phi_{x}|^{2}dx
-c\epsilon_{3}\int_{0}^{L}|\eta_{t}+\phi|^{2}dx+(m-1)d\int_{0}^{L}|w_{x}|^{2}dx\\
&&+md\int_{0}^{\infty}\int_{0}^{L}\sigma^{\prime}(s)|\kappa_{x}|^{2}dxds.
\end{eqnarray*}
Thus the proof has been completed.
\end{proof}
Now, we define the following state as
\begin{equation*}
U=(v,z,u^{1},u^{2},u^{3},w,\kappa)^\top,
\end{equation*}
with $z=v_{t}$, $u^{1}=\theta-\eta_{x}$, $u^{2}=\theta_{t}+\phi_{x}$ and $u^{3}=\eta_{t}+\phi$.
We make use of Lorenz gauge condition and $(\ref{C2})_{2}$, we write the following compatibility condition:
\begin{equation*}
  \xi u^{2}_{x}-u^{3}+\frac{\gamma}{\epsilon_{3}}v_{x}=0.
\end{equation*}
We define the $\sigma-$ weighted space  $W$ as follow
\begin{equation*}
  W=L^{2}_{\sigma}(\mathbb{R}^{+};H^{1}_{0}(0,L)).
\end{equation*}
endowed with the inner product
\begin{equation*}
  (\kappa_{1},\kappa_{2})_{W}=md\int_{0}^{L}\int_{0}^{\infty}\sigma(s)\kappa_{1x}\overline{\kappa}_{2x}dsdx, \;\; \forall \kappa_{1},\kappa_{2} \in W.
\end{equation*}
Now, we define the following Hilbert space $\mathcal{H}$ as
\begin{eqnarray*}
  \mathcal{H}&=&\{U\in (H^{1}_{L}(0,L)\times (L^{2}(0,L))^{5}\times W), u_{x}^{2}\in L^{2}(0,L), u^{2}(0)=u^{2}(L)=0,\\
&&\xi u^{2}_{x}-u^{3}+\frac{\gamma}{\epsilon_{3}}v_{x}=0\},
\end{eqnarray*}
with the following inner product for $U, W \in \mathcal{H}$
\begin{eqnarray*}
\left\langle U, W \right\rangle_{\mathcal{H}}
 &=& \int_{0}^{L}(\alpha v_{x}\overline{w^{1}_{x}}+\rho z^{1}\overline{w^{2}}+\mu
 u^{1}\overline{w^{3}}+\xi \epsilon_{3}u^{2}\overline{w^{4}}+\epsilon_{3}u^{3}\overline{w^{5}}+w\overline{w^{6}})dx
+(\kappa, w^{7})_{W}.
\end{eqnarray*}
 For $m \in [0,1]$, we define the unbounded linear operator $\mathcal{A}$ as
\begin{eqnarray}\label{A1}
\mathcal{A}U=
 \left [\begin{array}{c}
  z\\
\frac{\alpha}{\rho}v_{xx}+\frac{\gamma}{\rho}u_{x}^{3}-\frac{a}{\rho}w_{x}\\
 u^{2}-u_{x}^{3}\\
   -\frac{\mu }{\xi \epsilon_{3}}u^{1}-bu^{2}\\
    -\frac{\mu}{\epsilon_{3}}u^{1}_{x}+\frac{\gamma}{\epsilon_{3}}z_{x}-cu^{3}\\
    d\Lambda^{m}_{xx}-az_{x}\\
    -\kappa_{s}+w
\end{array}
\right ]
\end{eqnarray}
in a domain $\mathcal{D}(\mathcal{A})\subset \mathcal{H}$ defined as,
\begin{eqnarray}\label{A2}
\mathcal{D}(\mathcal{A})&:=& \{ U:=(v,z,u^{1},u^{2},u^{3},w,k)\in \mathcal{H} \\ \notag
&& z\in H^{1}_{L}(0,L),\;\; v \in H^{2}(0,L)\cap H^{1}_{L}(0,L),\;\; u^{1},u^{2}\in H^{1}_{0}(0,L), \\ \notag
&&u^{3}\in H^{1}(0,L),\;\;w \in H^{1}_{0}(0,L),\;\;  \alpha v_{x}(L)+\gamma u^{3}(L)=0,\\ \notag
&&\Lambda^{m}_{x}\in H^{1}(0,L), \; \kappa_{s}\in W, \; \kappa(.,0)=0\}
\end{eqnarray}
with $\Lambda^{m}=(1-m)w+m\int_{0}^{\infty}\sigma(s)\kappa(s)ds$.
\begin{prop}\label{PROPO1}
For $m\in [0,1]$, under the hypothesis $(\mathrm{\mathbf{H}})$ the unbounded linear operator $\mathcal{A}$  is the infinitesimal generator of a $\mathcal{C}_{0}$-semigroup of contractions on the Hibert space $\mathcal{H}$.
\end{prop}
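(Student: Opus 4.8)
The natural route is the Lumer--Phillips theorem: it suffices to prove that $\mathcal{D}(\mathcal{A})$ is dense in $\mathcal{H}$, that $\mathcal{A}$ is dissipative, and that $I-\mathcal{A}$ maps $\mathcal{D}(\mathcal{A})$ onto $\mathcal{H}$; together these give that $\mathcal{A}$ is m-dissipative and hence generates a $\mathcal{C}_{0}$-semigroup of contractions on $\mathcal{H}$. Density is routine: $\mathcal{D}(\mathcal{A})$ contains all tuples whose first six components are smooth and, in the interior of $(0,L)$, satisfy the boundary and compatibility constraints, while the history component is a finite sum $\kappa(x,s)=\sum_{j}\rho_{j}(s)\psi_{j}(x)$ with $\psi_{j}\in H^{1}_{0}(0,L)$ and $\rho_{j}$ smooth, vanishing near $s=0$ and decaying fast enough that $\kappa,\kappa_{s}\in W$; such tuples are dense in $\mathcal{H}$. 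One also records that $\mathcal{A}$ is well defined, i.e. $\mathcal{A}U\in\mathcal{H}$ for $U\in\mathcal{D}(\mathcal{A})$.

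Dissipativity is, up to notation, the energy identity already proved. Since $E(t)=\frac{1}{2}\|U\|_{\mathcal{H}}^{2}$ when $U=(v,v_{t},\theta-\eta_{x},\theta_{t}+\phi_{x},\eta_{t}+\phi,w,\kappa)$, repeating for a general $U\in\mathcal{D}(\mathcal{A})$ the integrations by parts of the previous Lemma, with the boundary conditions of $\mathcal{D}(\mathcal{A})$, yields
\[
\Re\langle\mathcal{A}U,U\rangle_{\mathcal{H}}=-b\xi\epsilon_{3}\!\int_{0}^{L}|u^{2}|^{2}dx-c\epsilon_{3}\!\int_{0}^{L}|u^{3}|^{2}dx+(m-1)d\!\int_{0}^{L}|w_{x}|^{2}dx+\frac{md}{2}\!\int_{0}^{\infty}\!\!\int_{0}^{L}\sigma^{\prime}(s)|\kappa_{x}|^{2}\,dx\,ds,
\]
which is $\leq 0$: indeed $b,c\geq 0$, $m\leq 1$, and $(\mathbf{H})$ gives $\sigma^{\prime}(s)\leq-d_{\sigma}\sigma(s)\leq 0$.

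The substance of the proof is surjectivity of $I-\mathcal{A}$. Given $F=(f^{1},\dots,f^{7})\in\mathcal{H}$, I look for $U\in\mathcal{D}(\mathcal{A})$ solving $(I-\mathcal{A})U=F$ componentwise. The decisive step is to eliminate the memory variable: the last equation is $\kappa+\kappa_{s}-w=f^{7}$ with $\kappa(\cdot,0)=0$, a linear first-order ODE in $s$ whose unique solution is $\kappa(x,s)=\int_{0}^{s}e^{-(s-r)}\big(w(x)+f^{7}(x,r)\big)\,dr$. Inserting this into $\Lambda^{m}=(1-m)w+m\int_{0}^{\infty}\sigma(s)\kappa(s)\,ds$ gives $\Lambda^{m}=\big((1-m)+m\,\ell_{\sigma}\big)\,w+m\,h$ with $\ell_{\sigma}=\int_{0}^{\infty}\sigma(s)\big(1-e^{-s}\big)\,ds\in(0,g(0))$ and $h$ an explicit element of $H^{1}_{0}(0,L)$ depending on $f^{7}$; the point is that $(1-m)+m\ell_{\sigma}>0$, so the $w$-equation becomes genuinely elliptic of second order. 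Substituting $z=v-f^{1}$ from the first equation and the algebraic expression for $u^{2}$ from the fourth equation (and using the compatibility relation to handle $u^{2}$), the problem reduces to a coupled elliptic system in the remaining unknowns, which one sets in weak form on a product of $H^{1}$-type spaces carrying the boundary conditions $v(0)=w(0)=w(L)=0$ and the natural condition $\alpha v_{x}(L)+\gamma u^{3}(L)=0$. The associated sesquilinear form is continuous and coercive: coercivity is supplied by the diagonal terms (with coefficients $\rho,\alpha,\mu,\xi\epsilon_{3},\epsilon_{3}$, the factors $1+b$ and $1+c$, and $d\big((1-m)+m\ell_{\sigma}\big)$, all positive), whereas the electro-mechanical coupling terms $\big(\gamma u^{3}_{x}\overline{v},\ \mu u^{1}_{x}\overline{u^{3}},\ aw_{x}\overline{v}\big)$ and their conjugates are skew and, after one integration by parts, collapse into boundary contributions that cancel against the natural boundary term. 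Lax--Milgram then produces a unique weak solution; elliptic regularity lifts it to the regularity listed in $\mathcal{D}(\mathcal{A})$, the compatibility relation $\xi u^{2}_{x}-u^{3}+\frac{\gamma}{\epsilon_{3}}v_{x}=0$ and the trace condition $\alpha v_{x}(L)+\gamma u^{3}(L)=0$ are verified on the constructed $U$, and $\kappa$ is defined by the displayed formula, with $\kappa,\kappa_{s}\in W$ guaranteed by the Dafermos condition in $(\mathbf{H})$. Hence $1\in\rho(\mathcal{A})$ and Lumer--Phillips applies.

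I expect the surjectivity step to be the main obstacle. The memory elimination must be done carefully --- checking that $\ell_{\sigma}$ is finite and strictly positive and that the reconstructed $\kappa,\kappa_{s}$ lie in the weighted space $W$, both of which lean on hypothesis $(\mathbf{H})$ --- and the constrained nature of $\mathcal{H}$ forces one to arrange the variational formulation so that the compatibility relation is respected and the several non-symmetric couplings between the mechanical, electrical and thermal variables do not destroy coercivity. Density and dissipativity, by contrast, are essentially bookkeeping on top of the energy identity.
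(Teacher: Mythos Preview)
Your dissipativity computation matches the paper's exactly. The surjectivity step, however, is organised quite differently.

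The paper does not prove that $I-\mathcal{A}$ is onto; it proves that $-\mathcal{A}$ is onto (so $0\in\rho(\mathcal{A})$), and it does so by \emph{explicit construction} rather than Lax--Milgram. From $-\mathcal{A}U=F$ one reads off $z=-f^{1}$ immediately; the history equation $\kappa_{s}-w=f^{7}$ (no zeroth-order term, since $\lambda=0$) integrates to $\kappa(\cdot,s)=sw+\int_{0}^{s}f^{7}(\cdot,\tau)\,d\tau$, which turns $\Lambda^{m}$ into $\tilde m\,w$ plus data with $\tilde m=(1-m)+m\int_{0}^{\infty}s\sigma(s)\,ds>0$. The sixth equation then gives $\Lambda^{m}$ by two antiderivatives, hence $w$ explicitly. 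The remaining unknowns $v,u^{1},u^{2},u^{3}$ are found by hand, splitting into the four cases $(b,c)=(0,0)$, $(0,c)$, $(b,0)$, $(b,c)\neq(0,0)$ and writing down closed-form Green-function expressions in each case; the compatibility relation $\xi u^{2}_{x}-u^{3}+\frac{\gamma}{\epsilon_{3}}v_{x}=0$ is used to close the system.

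Your route---solving $(I-\mathcal{A})U=F$, eliminating $\kappa$ via the Duhamel formula with kernel $e^{-(s-r)}$, and then invoking Lax--Milgram on a coupled elliptic system---is a legitimate alternative and is in fact the more common template in the literature. It has the advantage of avoiding the fourfold case split and of making the role of dissipativity in coercivity transparent (your observation that the electro-mechanical couplings are skew is exactly what the energy identity encodes). The paper's explicit construction, on the other hand, yields $0\in\rho(\mathcal{A})$ directly, which it later reuses when proving $i\mathbb{R}\subset\rho(\mathcal{A})$; it also sidesteps the need to set up and verify a variational formulation on a constrained product space. Both arguments are correct; they simply trade explicitness for generality.
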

\begin{proof}
The unbounded linear operator $\mathcal{A}$ is dissipative for $m\in [0,1]$: Under the hypothesis $(\mathrm{\mathbf{H}})$ and for $U\in \mathcal{D}(\mathcal{A})$, we claim that 
\begin{eqnarray*}
  \Re\langle \mathcal{A}U,U\rangle&=&-b\xi \epsilon_{3}\int_{0}^{L}|u^{2}|^{2}dx-c\epsilon_{3}\int_{0}^{L}|u^{3}|^{2}dx\\
  &&+(m-1)d\int_{0}^{L}|w_{x}|^{2}dx+\frac{md}{2}\int_{0}^{L}\int_{0}^{L}\sigma^{\prime}(s)|\kappa_{x}|^{2}dsdx\leq 0.
\end{eqnarray*}
The unbounded linear operator $-\mathcal{A}$ is surjective.\\
 Indeed, for $F:=(f^{1},f^{2},f^{3},f^{4},f^{5},f^{6},f^{7})^\top\in \mathcal{H}$, we search for $U\in \mathcal{D}(\mathcal{A})$ solution of
\begin{eqnarray}
-z=f^{1},\label{B1}\\
-\alpha v_{xx}-\gamma u_{x}^{3}+aw_{x}=\rho f^{2},\label{B2}\\
 -u^{2}+u_{x}^{3}=f^{3},\label{B3}\\
 \frac{\mu }{\xi \epsilon_{3}}u^{1}+bu^{2}=f^{4}, \label{B4}\\
  \frac{\mu}{\epsilon_{3}}u^{1}_{x}-\frac{\gamma}{\epsilon_{3}}z_{x}+cu^{3}=f^{5}, \label{B5}\\
 -d\Lambda^{m}_{xx}+az_{x}=f^{6},\label{B6}\\
 \kappa_{s}-w=f^{7}.\label{B7}
\end{eqnarray}
From (\ref{B1}), we have
\begin{equation}
z=-f^{1} \;\;\;  and \;\;\;  z\in H^{1}_{L}(0,L).
\end{equation}
From (\ref{B7}), we have
\begin{equation}\label{B8}
\kappa(\cdot, s)=sw+\int_{0}^{s}f^{7}(x,\tau)d\tau.
\end{equation}
Then, from (\ref{B1}) and (\ref{B6}) we obtain
\begin{equation}
\Lambda^{m}_{xx}=-d^{-1}(f^{6}+af_{x}^{1}),
\end{equation}
it follows
\begin{equation}\label{B9}
\Lambda^{m}=-d^{-1}\int_{0}^{x}\int_{0}^{x_{1}}(f^{6}+af_{x_{2}}^{1})dx_{2}dx_{1}+d^{-1}\dfrac{x}{L}\int_{0}^{L}\int_{0}^{x_{1}}(f^{6}+af_{x_{2}}^{1})dx_{2}dx_{1}
\end{equation}
then, we conclude $w$ easily from (\ref{B8}) and (\ref{B9}) defined as 
\begin{equation*}
w=\dfrac{1}{\tilde{m}}\left[\Lambda^{m}(x)-m\int_{0}^{\infty}\sigma(s)\int_{0}^{s}f^{7}(\cdot,\tau)d\tau ds\right]
\end{equation*}
where $\tilde{m}=(1-m)d+md \int_{0}^{\infty} s\sigma(s)ds >0$. Then, we conclude $k(\cdot,s)$ easily from the expression (\ref{B8}).\\
We seek for $u^{1},u^{2},u^{3}$ and $v$ by considering several cases, using Green's function theory, we obtain $U\in \mathcal{D}(\mathcal{A})$ by distinguishing different cases as follow \\
\textbf{Case 1. $(b,c)= (0,0)$}\\
\begin{small}
 \begin{eqnarray*}
 u^{1}(x)&=& \dfrac{ \xi \epsilon_{3}}{\mu}f^{4}(x),\\
 u^{2}(x)&=&\frac{1}{2\sqrt{\alpha_{1}}}\frac{e^{\sqrt{\frac{\alpha_{1}}{\xi \alpha}}x}-e^{-\sqrt{\frac{\alpha_{1}}{\xi \alpha}}x}}{e^{\sqrt{\frac{\alpha_{1}}{\xi \alpha}}L}-e^{-\sqrt{\frac{\alpha_{1}}{\xi \alpha}}L}}\times \notag\\
  &&\left[ \int_{0}^{L}e^{\sqrt{\frac{\alpha_{1}}{\xi \alpha}}(x_{1}-L)}F(x_{1})dx_{1}-\int_{0}^{L}e^{-\sqrt{\frac{\alpha_{1}}{\xi \alpha}}(x_{1}-L)}F(x_{1})dx_{1} \right] \notag\\
  &&+\frac{1}{2\sqrt{\alpha_{1}}}\left[ \int_{0}^{L}e^{\sqrt{\frac{\alpha_{1}}{\xi \alpha}}(x_{1}-x)}F_{1}(x_{1})dx_{1}-\int_{0}^{L}e^{-\sqrt{\frac{\alpha_{1}}{\xi \alpha}}(x_{1}-x)}F_{1}(x_{1})dx_{1} \right],
 \end{eqnarray*}
 \end{small}
 for $\alpha_{1}=\alpha +\frac{\gamma^{2}}{\epsilon_{3}}$ and $F_{1}(x)=-\alpha_{1}f^{3}(x)+\dfrac{\rho \gamma}{\epsilon_{3}}f^{2}(x)-a\dfrac{\gamma}{\epsilon_{3}}w_{x}(x)$. Then, we obtain
\begin{small}
 \begin{equation*}
 v(x)=\rho \int_{0}^{x}\int_{0}^{x_{1}}f^{2}(x_{2})dx_{2}dx_{1}-a\int_{0}^{x}w(x_{1})dx_{1}+\gamma \xi u^{2}(x)+\rho x \int_{0}^{L}f^{2}(x_{1})dx_{1},
 \end{equation*}
 \end{small}
 then we conclude $u^{3}$ from the compatibility condition.\\
 \textbf{Case 2. $b=0$ and $c \neq 0$}\\
 For $x\in (0,L)$, we have
 \begin{small}
 \begin{eqnarray*}
 u^{1}(x)&=&\frac{\xi \epsilon_{3}}{\mu}f^{4}(x),\\
 u^{3}(x)&=& c^{-1}f^{5}(x)-c^{-1}\left(\frac{\mu}{\epsilon_{3}}u^{1}_{x}(x)+\frac{\gamma}{\epsilon_{3}}f^{1}_{x}(x)\right),\\
 v(x)&=&\dfrac{a}{\alpha}\int_{0}^{x}w(x_{1})dx_{1}-\dfrac{\rho}{\alpha} \int_{0}^{x}\int_{0}^{x_{1}}f^{2}(x_{2})dx_{2}dx_{1}-\dfrac{1}{c\alpha}\int_{0}^{x}f^{5}(x_{1})dx_{1}\\
 &&+ \dfrac{\xi}{\alpha c}\int_{0}^{x}f^{4}(x_{1})dx_{1}
 +\dfrac{\gamma}{\alpha\epsilon_{3}c}f^{1}(x)+\rho\dfrac{x}{\alpha}\int_{0}^{L}f^{2}(x_{2})dx_{2},\\
 u^{2}(x)&=&\frac{1}{c\xi}\int_{0}^{x}f^{5}(x_{1})dx_{1}-\frac{1}{c\xi}\left(\xi f^{4}(x)+\frac{\gamma}{\epsilon_{3}}f^{1}(x)\right)-\frac{\gamma}{\epsilon_{3}}v(x).
 \end{eqnarray*}
 \end{small}
\textbf{Case 3. $b\neq0$ and $c=0$}\\ For $x\in (0,L)$, we have
\begin{small}
 \begin{eqnarray*}
 u^{1}(x)&=&\dfrac{\epsilon_{3}}{\mu}\int_{0}^{x}f^{5}(x_{1})dx_{1}-\dfrac{\gamma}{\mu}f^{1}(x),\\
 u^{2}(x)&=&b^{-1}\left[f^{4}(x)-\dfrac{1}{\xi}\int_{0}^{x}f^{5}(x_{1})dx_{1}+\dfrac{\gamma}{\xi\epsilon_{3}}f^{1}(x)\right],\\
 v(x)&=&\alpha^{-1}\left[ a\int_{0}^{x}w(x_{1})dx_{1}-\int_{0}^{x}\int_{0}^{x_{1}}\left( \rho f^{2}(x_{2})+\gamma u^{2}(x_{2})+\gamma f^{3}(x_{2})\right)dx_{2}dx_{1}\right]\\
 &&+\rho \alpha^{-1}x\int_{0}^{L}f^{2}(x_{1})dx_{1},\\
 u^{3}(x)&=&\gamma^{-1}aw(x)-\gamma^{-1}\alpha v_{x}(x)-\rho \int_{0}^{x}f^{2}(x_{1})dx_{1}+\gamma^{-1}\rho \int_{0}^{L}f^{2}(x_{1})dx_{1}.
 \end{eqnarray*}
 \end{small}
 \textbf{Case 4. $(b,c)\neq(0,0)$}\\For $x\in (0,L)$, we have
 \begin{small}
 \begin{equation*}
 u^{1}(x)=-\int_{0}^{x}e^{-\frac{c}{b\xi}(x_{1}-x)}F_{2}(x_{1})dx_{1},
 \end{equation*}
 \end{small}
 with $F_{2}(x)=\dfrac{\epsilon_{3}c}{\mu}f^{4}(x)+\dfrac{bc\epsilon_{3}}{\mu}f^{3}(x)-\dfrac{b\epsilon_{3}}{\mu}f^{5}_{x}(x)
 +\dfrac{b\gamma}{\mu}f^{1}_{xx}(x)$.
 \begin{small}
 \begin{eqnarray*}
 u^{2}(x)&=&b^{-1}\left[f^{4}(x)+\dfrac{\mu}{\xi\epsilon_{3}}\int_{0}^{x}e^{-\frac{c}{b\xi}(x_{1}-x)}F_{2}(x_{1})dx_{1}\right],\\
 v(x)&=& \frac{1}{\alpha_{2}}\int_{0}^{x}w(x_{1})dx_{1}-\frac{\gamma}{\alpha_{2}}u^{2}(x)-\frac{\rho}{\alpha_{2}} \int_{0}^{x}\int_{0}^{x_{1}}f^{2}(x_{2})dx_{2}dx_{1}+\frac{x}{\alpha_{2}}\int_{0}^{L}f^{2}(x_{1})dx_{1}, 
 \end{eqnarray*}
 \end{small}
 for $\alpha_{2}=\alpha+\frac{\gamma}{\epsilon_{3}}$. Then, we can conclude  $u^{3}$ using compatibility condition.\\
\end{proof}
Thus, the well-posedness result is giving in the following theorem, relying on the result of the proposition \ref{PROPO1}:
\begin{thm}
For $m\in[0,1]$, under the hypothesis $(\mathrm{\mathbf{H}})$ and for any initial datum $U_{0}\in \mathcal{H}$ there exists a unique solution $U$ of the
problem (\ref{S1.1}) such that
\begin{equation*}
  U\in C([0,+\infty ),\mathcal{H}).
\end{equation*}
Moreover, if $U_{0}\in \mathcal{D}(\mathcal{A})$, then
\begin{equation*}
  U\in C([0,+\infty ),\mathcal{D}(\mathcal{A}))\cap C^{1}([0,+\infty),
\mathcal{H}).
\end{equation*}
\end{thm}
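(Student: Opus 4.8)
The plan is to recognize that this theorem is an immediate corollary of Proposition \ref{PROPO1} together with the classical Lumer--Phillips / Hille--Yosida generation theory. First I would recast the reformulated system (\ref{C2}) as an abstract Cauchy problem on $\mathcal{H}$: setting $U=(v,z,u^{1},u^{2},u^{3},w,\kappa)^{\top}$ with $z=v_{t}$, $u^{1}=\theta-\eta_{x}$, $u^{2}=\theta_{t}+\phi_{x}$, $u^{3}=\eta_{t}+\phi$, and using the Lorenz gauge together with $(\ref{C2})_{2}$ to enforce the compatibility constraint $\xi u^{2}_{x}-u^{3}+\frac{\gamma}{\epsilon_{3}}v_{x}=0$ (which is preserved by the dynamics), one checks line by line that the evolution equations in (\ref{C2}) are exactly
\begin{equation*}
\left\{\begin{tabular}{l}
$U_{t}=\mathcal{A}U,\quad t>0,$\\
$U(0)=U_{0},$
\end{tabular}\right.
\end{equation*}
with $\mathcal{A}$ and $\mathcal{D}(\mathcal{A})$ as in (\ref{A1})--(\ref{A2}). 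This identification is routine: the only points worth spelling out are that $(\ref{C2})_{5}$--$(\ref{C2})_{6}$ combine, via the auxiliary variable $\kappa(x,s)=\int_{0}^{s}w(x,t-r)\,dr$ and the notation $\Lambda^{m}=(1-m)w+m\int_{0}^{\infty}\sigma(s)\kappa(s)\,ds$, into the sixth component $d\Lambda^{m}_{xx}-az_{x}$ and the transport component $-\kappa_{s}+w$, and that the boundary conditions in (\ref{C2}) are precisely those encoded in $\mathcal{D}(\mathcal{A})$.

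Next I would invoke Proposition \ref{PROPO1}: under hypothesis $(\mathrm{\mathbf{H}})$ and for $m\in[0,1]$, $\mathcal{A}$ is the infinitesimal generator of a $C_{0}$-semigroup of contractions $\big(e^{t\mathcal{A}}\big)_{t\geq 0}$ on $\mathcal{H}$. Standard semigroup theory then gives both assertions at once. For $U_{0}\in\mathcal{H}$, the function $U(t):=e^{t\mathcal{A}}U_{0}$ is the unique mild solution and satisfies $U\in C([0,+\infty),\mathcal{H})$, while uniqueness in the mild sense is automatic (two mild solutions of a linear problem with the same data coincide because the semigroup is uniquely determined by its generator). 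If moreover $U_{0}\in\mathcal{D}(\mathcal{A})$, then $t\mapsto e^{t\mathcal{A}}U_{0}$ is continuously differentiable with $\frac{d}{dt}e^{t\mathcal{A}}U_{0}=\mathcal{A}e^{t\mathcal{A}}U_{0}=e^{t\mathcal{A}}\mathcal{A}U_{0}$, so $U\in C^{1}([0,+\infty),\mathcal{H})$ and, since $\mathcal{A}e^{t\mathcal{A}}U_{0}=e^{t\mathcal{A}}\mathcal{A}U_{0}$ is continuous in $\mathcal{H}$ and $e^{t\mathcal{A}}$ leaves $\mathcal{D}(\mathcal{A})$ invariant, $U\in C([0,+\infty),\mathcal{D}(\mathcal{A}))$ when $\mathcal{D}(\mathcal{A})$ is given its graph norm.

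Finally I would translate back: since the map $U\mapsto(v,\phi,\theta,\eta,w,\varphi_{0})$ recovering the original unknowns from the state vector is a bounded bijection compatible with the gauge reduction, the solution $U$ of the abstract problem yields the unique solution of (\ref{S1.1}) with the stated regularity. I do not expect any genuine obstacle here — the substantive work (dissipativity of $\mathcal{A}$ and surjectivity of $-\mathcal{A}$, hence maximality) is already contained in Proposition \ref{PROPO1}; the only mild care needed is in writing out the abstract reformulation precisely enough that the reader sees (\ref{S1.1}), (\ref{C2}) and $U_{t}=\mathcal{A}U$ are the same problem, and in noting that the compatibility condition defining $\mathcal{H}$ is consistent with the dynamics so that the semigroup indeed acts on $\mathcal{H}$.
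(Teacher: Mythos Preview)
Your proposal is correct and follows exactly the paper's approach: the paper's own proof consists of a single sentence invoking Proposition~\ref{PROPO1} (Lumer--Phillips) and the standard semigroup consequences. Your write-up is simply a more detailed version of the same argument.
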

\begin{proof}
The proof of this theorem rely on the application of Lumer-Philips used in  proposition \ref{PROPO1}.
\end{proof}
\section{Exponential stability result of the piezoelectric beam with Coleman-Gurtin thermal law}
The objective of this section is to analyse the exponential decay of solution for the piezoelectric beam with Coleman-
Gurtin law ($m\in (0,1)$) distinguishing four cases. We start with the following case:\\
\subsection{The electrical field components in $x$- and $z$- direction are damped $\mathbf{(b,c)\neq (0,0)}$}
We give the following result considering a piezoelectric beam with Coleman-Gurtin thermal 
law and the electrical field components are damped:
\begin{thm}\label{THEOREM1}
For $m\in (0,1)$, under the hypothesis $(\mathrm{\mathbf{H\psi}})$, the $\mathcal{C}_{0}-$semigroup of contractions $(e^{t\mathcal{A}})_{t\geq0}$ is exponentially stable, i.e. there exists $M\geq 1$ and $\epsilon >0$ such  that
\begin{equation*}
  \|e^{t\mathcal{A}}U_{0}\|_{\mathcal{H}}\leq Me^{-\epsilon t} \|U_{0}\|_{\mathcal{H}}, \;\; \mathrm{for}  \;\; t \geq 0.
\end{equation*}
\end{thm}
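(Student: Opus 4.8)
The plan is to apply the frequency–domain characterization of exponential stability for bounded $C_{0}$–semigroups (Gearhart–Huang–Pr\"uss): since, by Proposition \ref{PROPO1}, $(e^{t\mathcal A})_{t\ge 0}$ is a contraction semigroup and $0\in\rho(\mathcal A)$ (the explicit resolvent constructed there, together with dissipativity, gives invertibility at the origin), it suffices to prove that (i) $i\mathbb R\subset\rho(\mathcal A)$, and (ii) $\sup_{\lambda\in\mathbb R}\|(i\lambda I-\mathcal A)^{-1}\|_{\mathcal L(\mathcal H)}<\infty$. Both are established by contradiction. If (ii) fails there exist $\lambda_{n}\in\mathbb R$ and $U_{n}=(v_{n},z_{n},u^{1}_{n},u^{2}_{n},u^{3}_{n},w_{n},\kappa_{n})\in\mathcal D(\mathcal A)$ with $\|U_{n}\|_{\mathcal H}=1$ and $F_{n}:=(i\lambda_{n}I-\mathcal A)U_{n}\to 0$ in $\mathcal H$; the case $(\lambda_{n})$ bounded reduces, after extracting a convergent subsequence and using that the resolvent norm blows up near $\partial\rho(\mathcal A)$, to statement (i), so one may assume $|\lambda_{n}|\to\infty$. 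The goal is then to reach the contradiction $U_{n}\to 0$.

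The first step is the dissipation estimate: taking the real part of $\langle(i\lambda_{n}I-\mathcal A)U_{n},U_{n}\rangle_{\mathcal H}$ and using the identity for $\Re\langle\mathcal AU,U\rangle$ from Proposition \ref{PROPO1} yields
\[ b\xi\epsilon_{3}\!\int_{0}^{L}\!|u^{2}_{n}|^{2}dx+c\epsilon_{3}\!\int_{0}^{L}\!|u^{3}_{n}|^{2}dx+(1-m)d\!\int_{0}^{L}\!|w_{n,x}|^{2}dx+md\!\int_{0}^{\infty}\!\!\int_{0}^{L}\!(-\sigma^{\prime}(s))|\kappa_{n,x}|^{2}dxds\longrightarrow 0. \]
Since $m\in(0,1)$ the third term is genuinely present, so $w_{n}\to 0$ in $H^{1}_{0}(0,L)$ by Poincar\'e; the Dafermos condition $-\sigma^{\prime}\ge d_{\sigma}\sigma$ in $(\mathbf{H})$ converts the last term into control of $\|\kappa_{n}\|_{W}$, hence $\kappa_{n}\to 0$ in $W$; moreover $u^{2}_{n}\to 0$ in $L^{2}(0,L)$ if $b\neq 0$, respectively $u^{3}_{n}\to 0$ in $L^{2}(0,L)$ if $c\neq 0$.

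The second step is a bootstrap recovering $v_{n}\to 0$ in $H^{1}_{L}(0,L)$, $z_{n}\to 0$ in $L^{2}(0,L)$, $u^{1}_{n}\to 0$ in $L^{2}(0,L)$, and whichever of $u^{2}_{n},u^{3}_{n}$ is not yet controlled, contradicting $\|U_{n}\|_{\mathcal H}=1$. As in the surjectivity proof of Proposition \ref{PROPO1} this splits into the four sub-cases according to which of $b,c$ vanish, the remaining unknowns being read off from the scalar resolvent equations $i\lambda_{n}U_{n}-\mathcal AU_{n}=F_{n}$ together with suitable multipliers. The decisive structural tools are: the compatibility relation $\xi u^{2}_{n,x}-u^{3}_{n}+\frac{\gamma}{\epsilon_{3}}v_{n,x}=0$, which transfers smallness between $v_{n,x}$ and $u^{3}_{n}$; the third and fourth resolvent equations, which express $u^{1}_{n}$ through $u^{2}_{n}$ (when $b\neq 0$) or through an integration of the fifth equation (when $c\neq 0$); and the pairing of the second and fifth resolvent equations, which share the coupling term $az_{n,x}=ia\lambda_{n}v_{n,x}$ and allow the stretching–velocity pair $(v_{n},z_{n})$ to be absorbed once $w_{n}$ and $\kappa_{n}$ are small. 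Statement (i) follows from the same scheme with $(\lambda_{n})$ bounded (equivalently, by a direct eigenfunction argument): a purely imaginary eigenvalue forces, via the dissipation identity, $u^{2}=u^{3}=w=\kappa=0$, and the cascade through the resolvent equations — in particular $\Lambda^{m}\equiv 0$, hence $z_{x}=0$ in the fifth equation — then gives $U=0$; combined with $0\in\rho(\mathcal A)$ and the standard closedness of $\{\lambda\in\mathbb R:\,i\lambda\in\rho(\mathcal A)\}$ this yields $i\mathbb R\subset\rho(\mathcal A)$.

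The main obstacle is the high–frequency regime $|\lambda_{n}|\to\infty$: the dissipation controls $u^{2}_{n},u^{3}_{n},w_{n},\kappa_{n}$ only in weak norms, with no compensating power of $\lambda_{n}$, so one cannot solve for $u^{1}_{n}$ naively from $i\lambda_{n}u^{2}_{n}+\frac{\mu}{\xi\epsilon_{3}}u^{1}_{n}+bu^{2}_{n}=f^{4}_{n}$, since $\lambda_{n}u^{2}_{n}$ need not be small. One must instead produce a $\lambda_{n}$–uniform bound on $u^{1}_{n}$ — e.g.\ by testing the $u^{3}_{n}$–equation against a primitive of $u^{3}_{n}$ when $c\neq 0$, and by a more delicate combination of the third and fourth equations when only $b\neq 0$ — and feed it into the mechanical block. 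Controlling the stretching $v_{n}$ in the absence of any damping acting directly on $v$ is the crux: its smallness has to propagate solely through the coupling with the now–small $w_{n}$ (via $aw_{n,x}$ in equation two and $az_{n,x}$ in equation five) and through the compatibility relation with $u^{3}_{n}$ — precisely the mechanism the paper advertises, namely that the Coleman–Gurtin thermal dissipation alone suffices. Keeping every estimate uniform in $n$ while correctly tracking the $\lambda_{n}$– and $\lambda_{n}^{-1}$–weights is the technical heart of the argument.
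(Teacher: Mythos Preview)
Your framework (Gearhart--Huang--Pr\"uss, contradiction, dissipation identity) matches the paper's. But two points deserve correction.

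First, a scope issue: Theorem~\ref{THEOREM1} is stated in the subsection where \emph{both} $b>0$ and $c>0$; the four-way case split you set up is the layout of Section~3 as a whole (Theorems~\ref{THEOREM1}--\ref{THEOREM4}), not of this single theorem. So here the dissipation identity already gives $u^{2}_{n}\to 0$ \emph{and} $u^{3}_{n}\to 0$ simultaneously, and no sub-cases are needed.

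Second, and more substantively, your proposed mechanism for controlling $v_{n,x}$ is not the one the paper uses for this theorem, and your first-named tool has a gap as stated. You write that the compatibility relation $\xi u^{2}_{n,x}-u^{3}_{n}+\frac{\gamma}{\epsilon_{3}}v_{n,x}=0$ ``transfers smallness between $v_{n,x}$ and $u^{3}_{n}$'', but dissipation only controls $u^{2}_{n}$ in $L^{2}$, not $u^{2}_{n,x}$, so this relation alone does not make $v_{n,x}$ small. The paper instead differentiates the fourth resolvent equation, multiplies by $\bar v_{x}$, combines it with the fifth equation multiplied by $\bar v_{x}$ so that the $u^{1}_{x}$ terms cancel, and then uses the compatibility relation for $U$ \emph{and} for $F$ to reach the exact identity
\[
  b\,\frac{\gamma}{\epsilon_{3}}\int_{0}^{L}|v_{x}|^{2}\,dx=(b-c)\int_{0}^{L}u^{3}\bar v_{x}\,dx,
\]
which immediately yields $\alpha\|v_{x}\|^{2}\lesssim\|U\|_{\mathcal H}\|F\|_{\mathcal H}$, uniformly in $\lambda$. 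After that, $\|z\|$ and $\|u^{1}\|$ follow by straightforward multipliers. The thermal-coupling route you sketch---pairing the $w$--equation (this is the \emph{sixth} resolvent equation, carrying $az_{x}$, not the fifth) with the mechanical equation to absorb $(v_{n},z_{n})$---is precisely what the paper does for the harder cases $b=0$ or $c=0$ (Lemma~\ref{LEMMA1} and Lemma~\ref{CASE14}), where it produces $\lambda$-dependent bounds that must then be shown to vanish. For Theorem~\ref{THEOREM1} that detour is unnecessary; the algebraic identity above does all the work.
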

Following Hung and Pr\"{u}ss in \cite{17,23}, we have to check the following conditions:
\begin{equation*}
i\mathbb{R}\subset \rho(\mathcal{A}) \;\;\;\;\;\;\;\;\;\;\;\;\;\;\;\;\;\; \;\;\;\;\;\;\;\;\;\;\;\;\;\;\;\;\;\;\;\;\;\;\;\;\;\;\;\;\;\;(\mathrm{\mathbf{C1}})
\end{equation*}
and
\begin{equation*}
\sup_{\lambda \in \mathbb{R}}\|(i\lambda I-\mathcal{A})^{-1}\|_{\mathcal{L}(\mathcal{H})}=o(1). \;\;\;\;\;\;\;\;\;\;\;\;\;\;\;\;\;\; (\mathrm{\mathbf{C2}})
\end{equation*}
  Let $(\lambda,U=(v,z,u^{1},u^{2},u^{3},w,\kappa)^\top)\in \mathbb{R}^{\ast}\times\mathcal{D}(\mathcal{A})$, with $|\lambda|\geq1$, such that
  \begin{equation}\label{CF12}
  (i\lambda I- \mathcal{A})U=F:=(f^{1},f^{2},f^{3},f^{4},f^{5},f^{6},f^{7})^\top\in \mathcal{H}.
  \end{equation}
  We detail as
  \begin{eqnarray}
   i\lambda v-z &=& f^{1}, \label{CA3} \\
    i\lambda z-\frac{\alpha}{\rho}v_{xx}-\frac{\gamma}{\rho}u^{3}_{x}+\frac{a}{\rho}w_{x} &=& f^{2},\label{CA4} \\
    i\lambda u^{1}-u^{2}+u^{3}_{x} &=& f^{3},\label{CA5} \\
    i\lambda u^{2}+\frac{\mu}{\xi \epsilon_{3}}u^{1}+bu^{2} &=& f^{4}, \label{CA6}\\
    i\lambda u^{3}+\frac{\mu}{\epsilon_{3}}u^{1}_{x}-\frac{\gamma}{\epsilon_{3}}z_{x}+cu^{3} &=& f^{5}, \label{CA7}\\
    i\lambda w-\lambda \Lambda^{m}_{xx}+az_{x} &=& f^{6}, \label{CA8}\\
    i\lambda k+k_{s}-w &=& f^{7}.\label{CA9}
  \end{eqnarray}
  
The proof of this theorem  is given through the use of  several lemmas, we start with the following lemma:
\begin{lem}\label{CA10}
For $m\in (0,1)$, assume that the hypothesis  $(\mathrm{\mathbf{H}})$ holds.
 The solution  $U\in \mathcal{D}(\mathcal{A})$ satisfy the following estimates
\begin{eqnarray}
\int_{0}^{L}|w_{x}|^{2}dx&\leq&\mathcal{S}_{1}\|U\|_{\mathcal{H}}\|F\|_{\mathcal{H}}, \label{CA24}\\
\int_{0}^{L}|w|^{2}dx&\leq&\mathcal{S}_{2}\|U\|_{\mathcal{H}}\|F\|_{\mathcal{H}}, \label{CA25}\\
\int_{0}^{L}|u^{2}|^{2}dx&\leq&\mathcal{S}_{3}\|U\|_{\mathcal{H}}\|F\|_{\mathcal{H}},\label{CA26}\\
\int_{0}^{L}|u^{3}|^{2}dx&\leq&\mathcal{S}_{4}\|U\|_{\mathcal{H}}\|F\|_{\mathcal{H}},\label{CA27}
\end{eqnarray}
and
\begin{eqnarray}
  \int_{0}^{+\infty} \int_{0}^{L}\sigma(s)|\kappa_{x}|^{2}dxds&\leq& \mathcal{S}_{5}\|U\|_{\mathcal{H}}\|F\|_{\mathcal{H}}, \label{CA28}\\
  \int_{0}^{L}|\Lambda_{x}^{m}|^{2}dx &\leq& \mathcal{S}_{6}\|U\|_{\mathcal{H}}\|F\|_{\mathcal{H}},\label{CA29}
\end{eqnarray}
with
\begin{eqnarray*}
\mathcal{S}_{1}&=&\dfrac{1}{(1-m)d},\;\; \mathcal{S}_{2}=\dfrac{C_{P}}{(1-m)d},\;\; 
\mathcal{S}_{3}=\dfrac{1}{b\xi\epsilon_{3}},\\
\mathcal{S}_{4}&=&\dfrac{1}{c\epsilon_{3}},\;\; \mathcal{S}_{5}=\frac{2}{mdd_{\sigma}},\;\; \mathcal{S}_{6}=\dfrac{2(1-m)}{d}+\dfrac{4g(0)}{mdd_{\sigma}}.
\end{eqnarray*}
\end{lem}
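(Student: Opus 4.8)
The plan is to read off each estimate from the dissipation identity $\Re\langle\mathcal{A}U,U\rangle\le 0$ computed in Proposition \ref{PROPO1}. Taking the inner product of \eqref{CF12} with $U$ in $\mathcal{H}$, one has $\Re\langle(i\lambda I-\mathcal{A})U,U\rangle=\Re\langle F,U\rangle$; since $\Re\langle i\lambda U,U\rangle=0$, this gives $-\Re\langle\mathcal{A}U,U\rangle=\Re\langle F,U\rangle\le\|U\|_{\mathcal H}\|F\|_{\mathcal H}$. Substituting the explicit formula for $\Re\langle\mathcal{A}U,U\rangle$ and using the Dafermos condition $\sigma'(s)\le-d_\sigma\sigma(s)$ from $(\mathbf{H})$, we obtain
\begin{equation*}
b\xi\epsilon_3\!\int_0^L\!|u^2|^2dx+c\epsilon_3\!\int_0^L\!|u^3|^2dx+(1-m)d\!\int_0^L\!|w_x|^2dx+\frac{mdd_\sigma}{2}\!\int_0^\infty\!\!\int_0^L\!\sigma(s)|\kappa_x|^2dxds\le\|U\|_{\mathcal H}\|F\|_{\mathcal H}.
\end{equation*}
Since every term on the left is nonnegative, each one is individually bounded by $\|U\|_{\mathcal H}\|F\|_{\mathcal H}$, which yields \eqref{CA24}, \eqref{CA26}, \eqref{CA27} and \eqref{CA28} with the constants $\mathcal S_1,\mathcal S_3,\mathcal S_4,\mathcal S_5$ as stated. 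Estimate \eqref{CA25} then follows from \eqref{CA24} by the Poincaré inequality on $(0,L)$ (with constant $C_P$), using $w\in H^1_0(0,L)$.

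For \eqref{CA29} I would expand $\Lambda^m_x=(1-m)w_x+m\int_0^\infty\sigma(s)\kappa_x(s)\,ds$, apply the triangle inequality in $L^2(0,L)$ followed by $|a+b|^2\le 2|a|^2+2|b|^2$, and estimate the memory term by Cauchy–Schwarz in the $s$-variable against the weight $\sigma$: since $\int_0^\infty\sigma(s)\,ds=g(0)$,
\begin{equation*}
\int_0^L\Big|m\int_0^\infty\sigma(s)\kappa_x(s)\,ds\Big|^2dx\le m^2 g(0)\int_0^L\int_0^\infty\sigma(s)|\kappa_x(s)|^2\,ds\,dx.
\end{equation*}
Combining this with \eqref{CA24} and \eqref{CA28} gives $\int_0^L|\Lambda^m_x|^2dx\le 2(1-m)^2\mathcal S_1\|U\|_{\mathcal H}\|F\|_{\mathcal H}+2m^2g(0)\,\mathcal S_5\|U\|_{\mathcal H}\|F\|_{\mathcal H}$, and bounding $(1-m)^2\le 1-m$ and $m^2\le 1$ (valid since $m\in(0,1)$) produces exactly $\mathcal S_6=\frac{2(1-m)}{d}+\frac{4g(0)}{mdd_\sigma}$.

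The computation is essentially bookkeeping once the dissipation identity is in hand, so there is no serious obstacle; the only point requiring a little care is the sign/weight handling of the memory term — making sure the Dafermos inequality is applied in the correct direction so that $\int\int\sigma'|\kappa_x|^2\le-d_\sigma\int\int\sigma|\kappa_x|^2$, and that the factor $\tfrac12$ in $\Re\langle\mathcal AU,U\rangle$ is tracked correctly to land on $\mathcal S_5=\tfrac{2}{mdd_\sigma}$. I would also note explicitly at the start that $b\neq0$ and $c\neq0$ are implicitly used for \eqref{CA26}–\eqref{CA27} (dividing by $b$ and $c$), consistent with the standing case assumption $(b,c)\neq(0,0)$; in the degenerate subcases these particular estimates are simply omitted or replaced.
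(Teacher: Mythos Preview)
Your proposal is correct and follows essentially the same approach as the paper: take the inner product of the resolvent equation with $U$, use the explicit dissipation formula from Proposition~\ref{PROPO1} together with the Dafermos condition to extract \eqref{CA24}, \eqref{CA26}--\eqref{CA28}, apply Poincar\'e for \eqref{CA25}, and for \eqref{CA29} expand $\Lambda^m_x$, use $|a+b|^2\le 2|a|^2+2|b|^2$ and Cauchy--Schwarz in $s$ against the weight $\sigma$. Your extra bookkeeping remarks (tracking the $m^2$ factor and noting that $b,c>0$ are needed to divide) are accurate and do not deviate from the paper's argument.
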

\begin{proof}
Taking the inner product of (\ref{CF12}) with $U\in \mathcal{D}(\mathcal{A})$, we find
\begin{eqnarray*}
b\xi \epsilon_{3}\int_{0}^{L}|u^{2}|^{2}dx
+c\epsilon_{3}\int_{0}^{L}|u^{3}|^{2}dx +(1-m)d\int_{0}^{L}|w_{x}|^{2}dx \notag\\
-\frac{md}{2}\int_{0}^{L}\int_{0}^{\infty}\sigma^{\prime}(s)|\kappa_{x}|^{2}dsdx=-\Re(\mathcal{A}U,U)_{\mathcal{H}}&\leq& \|F\|_{\mathcal{H}}\|U\|_{\mathcal{H}}.
\end{eqnarray*}
Using the hypothesis $(\mathcal{\mathbf{H}})$, we get the following estimation
\begin{eqnarray*}
b\xi \epsilon_{3}\int_{0}^{L}|u^{2}|^{2}dx
c\epsilon_{3}\int_{0}^{L}|u^{3}|^{2}dx+(1-m)d\int_{0}^{L}|w_{x}|^{2}dx \notag\\
+\frac{dd_{\sigma}m}{2}\int_{0}^{L}\int_{0}^{\infty}\sigma(s)|\kappa_{x}|^{2}dsdx=-\Re(\mathcal{A}U,U)_{\mathcal{H}}&\leq& \|F\|_{\mathcal{H}}\|U\|_{\mathcal{H}},
\end{eqnarray*}
then since $m\in (0,1)$ the inequalities (\ref{CA24}), (\ref{CA26}), (\ref{CA27}) and  (\ref{CA28})
 are established. Making use of Poincaré's inequality in the result (\ref{CA24}) we obtain (\ref{CA25}). On the other side,
   the inequality $2ab\leq a^{2}+b^{2}$ and Cauchy-Schwartz inequality leads to
\begin{eqnarray*}
|\Lambda_{x}|^{2}&\leq& 2(1-m)^{2}|w_{x}|^{2}+ 2\left(\int_{0}^{+\infty}|\sigma(s)||\kappa_{x}(s)|ds\right)^{2}\\
&\leq& 2 (1-m)|w_{x}|^{2}+2g(0)\int_{0}^{+\infty}\sigma(s)|\kappa_{x}(s)|^{2}ds,
\end{eqnarray*}
then,
\begin{eqnarray*}
\int_{0}^{L}|\Lambda_{x}|^{2}dx&\leq&2(1-m)^{2}\int_{0}^{L}|w_{x}|^{2}dx
+2g(0)\int_{0}^{L}\int_{0}^{+\infty}\sigma(s)|\kappa_{x}(s)|^{2}dsdx\\
&\leq&\left( 2(1-m)^{2}\mathcal{S}_{1}+2g(0)\mathcal{S}_{5}\right)\|U\|_{\mathcal{H}}\|F\|_{\mathcal{H}}.
\end{eqnarray*}
Thus, the proof is completed.
\end{proof}
\begin{lem}\label{CA11}
For $m\in (0,1)$ and under the hypothesis $(\mathrm{\mathbf{H}})$, 
 the solution \\$U=(v,z,u^{1},u^{2},u^{3},w,k)^\top\in \mathcal{D}(\mathcal{A})$ satisfies 
\begin{equation*}
  \alpha \int_{0}^{L}|v_{x}|^{2}dx\leq \mathcal{S}_{7}\|U\|_{\mathcal{H}}\|F\|_{\mathcal{H}}.
\end{equation*}
where $\mathcal{S}_{7}=\alpha(b-c)^{2} \dfrac{\epsilon_{3}^{2}}{b^{2}\gamma^{2}}\mathcal{S}_{4}$.
\end{lem}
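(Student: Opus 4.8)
The plan is to derive a pointwise algebraic identity expressing $v_{x}$ in terms of $u^{3}$ and the data $F$, and then to feed it into the estimate \eqref{CA27} already established in Lemma~\ref{CA10}. First I would differentiate the resolvent equation \eqref{CA6} with respect to $x$ --- legitimate since $U\in\mathcal{D}(\mathcal{A})$ and membership of $F$ in $\mathcal{H}$ force the relevant components into $H^{1}(0,L)$ --- obtaining $(i\lambda+b)u^{2}_{x}+\frac{\mu}{\xi\epsilon_{3}}u^{1}_{x}=f^{4}_{x}$, hence $\frac{\mu}{\epsilon_{3}}u^{1}_{x}=\xi f^{4}_{x}-\xi(i\lambda+b)u^{2}_{x}$. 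Substituting this into \eqref{CA7} eliminates $u^{1}$; then the compatibility condition $\xi u^{2}_{x}-u^{3}+\frac{\gamma}{\epsilon_{3}}v_{x}=0$ lets me replace $\xi u^{2}_{x}=u^{3}-\frac{\gamma}{\epsilon_{3}}v_{x}$, and \eqref{CA3} gives $z_{x}=i\lambda v_{x}-f^{1}_{x}$. The decisive cancellation is that the two $i\lambda u^{3}$ contributions cancel and the two $i\lambda v_{x}$ contributions reduce to $b\frac{\gamma}{\epsilon_{3}}v_{x}$, so the factor $i\lambda$ disappears entirely, leaving
\begin{equation*}
b\,\frac{\gamma}{\epsilon_{3}}\,v_{x}=(b-c)\,u^{3}+f^{5}-\xi f^{4}_{x}-\frac{\gamma}{\epsilon_{3}}f^{1}_{x}.
\end{equation*}

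From here the bound is routine. I would rewrite this as $v_{x}=\frac{\epsilon_{3}}{b\gamma}[(b-c)u^{3}+f^{5}-\xi f^{4}_{x}-\frac{\gamma}{\epsilon_{3}}f^{1}_{x}]$, take the $L^{2}(0,L)$-norm, apply the triangle inequality, and multiply by $\alpha$. The leading term is $\alpha\frac{\epsilon_{3}^{2}}{b^{2}\gamma^{2}}(b-c)^{2}\int_{0}^{L}|u^{3}|^{2}\,dx$, which by \eqref{CA27} is bounded by $\alpha(b-c)^{2}\frac{\epsilon_{3}^{2}}{b^{2}\gamma^{2}}\mathcal{S}_{4}\|U\|_{\mathcal{H}}\|F\|_{\mathcal{H}}=\mathcal{S}_{7}\|U\|_{\mathcal{H}}\|F\|_{\mathcal{H}}$. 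The remaining contributions involve only $\|f^{5}\|_{L^{2}}$, $\|f^{4}_{x}\|_{L^{2}}$ and $\|f^{1}_{x}\|_{L^{2}}$, all controlled by $\|F\|_{\mathcal{H}}$, and for $|\lambda|\ge 1$ they are lower order and absorbed into the stated bound.

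I expect the only real difficulty to be the bookkeeping in the first step: the substitutions must be carried out in the right order so that every occurrence of $i\lambda$ cancels --- a careless substitution leaves a spurious $i\lambda v_{x}$ term and the identity is useless. A secondary point is justifying $f^{4}_{x}\in L^{2}(0,L)$, which is what makes the differentiation of \eqref{CA6} meaningful; this is guaranteed by the regularity built into the definition of the phase space $\mathcal{H}$. No sign or boundary-term subtleties arise, since the whole argument is algebraic and pointwise in $x$ and uses no integration by parts.
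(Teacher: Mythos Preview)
Your pointwise identity
\[
b\,\frac{\gamma}{\epsilon_{3}}\,v_{x}=(b-c)\,u^{3}+f^{5}-\xi f^{4}_{x}-\frac{\gamma}{\epsilon_{3}}f^{1}_{x}
\]
is correct, and this route is actually more direct than the paper's, which reaches the same relation in integrated form by multiplying \eqref{CA7} and the derivative of \eqref{CA6} by $\bar v_{x}$ and combining. The cancellation of the $i\lambda$ terms you describe is exactly the mechanism at work.

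There is, however, a genuine gap in the final step. You assert that the data terms $f^{5}-\xi f^{4}_{x}-\frac{\gamma}{\epsilon_{3}}f^{1}_{x}$ are ``lower order'' for $|\lambda|\ge 1$ and can be absorbed. But these terms carry no power of $\lambda$ at all; squaring your identity would produce a contribution of size $\|F\|_{\mathcal H}^{2}$, which is \emph{not} dominated by $\|U\|_{\mathcal H}\|F\|_{\mathcal H}$, and the stated bound with the exact constant $\mathcal S_{7}$ would fail. What you have missed is that the compatibility condition built into the definition of $\mathcal H$ applies equally to $F$: for $F\in\mathcal H$ one has $\xi f^{4}_{x}-f^{5}+\frac{\gamma}{\epsilon_{3}}f^{1}_{x}=0$, so the data terms vanish \emph{identically}. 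This is precisely the step the paper invokes. Once you use it, your identity collapses to $b\frac{\gamma}{\epsilon_{3}}v_{x}=(b-c)u^{3}$, and then $\alpha\int_{0}^{L}|v_{x}|^{2}\,dx=\alpha\frac{\epsilon_{3}^{2}(b-c)^{2}}{b^{2}\gamma^{2}}\int_{0}^{L}|u^{3}|^{2}\,dx\le \mathcal S_{7}\|U\|_{\mathcal H}\|F\|_{\mathcal H}$ by \eqref{CA27}, with the exact constant and no absorption needed.
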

\begin{proof}
Inserting the equation (\ref{CA3}) in (\ref{CA7}), we get
\begin{equation}
  i\lambda u^{3}+\frac{\mu}{\epsilon_{3}}u^{1}_{x}-i\lambda \frac{\gamma}{\epsilon_{3}}v_{x}
  +\frac{\gamma}{\epsilon_{3}}f^{1}_{x}+cu^{3}=f^{5}. \label{CA12}
\end{equation}
Multiplying the result by $i\lambda^{-1}\bar{v_{x}}$, we integrate over $(0,L)$ as a result we obtain
\begin{eqnarray}
  -\int_{0}^{L}u^{3}v_{x}dx+i\lambda^{-1}\frac{\mu}{ \epsilon_{3}}\int_{0}^{L}u^{1}_{x}v_{x}dx\notag\\
  +\frac{\gamma}{\epsilon_{3}}\int_{0}^{L}|v_{x}|^{2}dx
  +i\lambda^{-1}\frac{\gamma}{\epsilon_{3}}\int_{0}^{L}f_{x}^{1}v_{x}dx\notag\\
  +i\lambda c\int_{0}^{L}u^{3}v_{x}dx&=&i\lambda^{-1}\int_{0}^{L}f^{5}v_{x}dx. \label{CA13}
\end{eqnarray}
Using the compatibility condition $\xi u^{2}_{x}-u^{3}+\frac{\gamma}{\epsilon_{3}}v_{x}=0$ 
 in the first integral of  (\ref{CA13}), we write
\begin{eqnarray}
  -\xi\int_{0}^{L}u^{2}_{x}\bar{v_{x}}dx+i\lambda^{-1}\frac{\mu}{ \epsilon_{3}}\int_{0}^{L}u^{1}_{x}v_{x}dx \notag\\
  +i\lambda^{-1}\frac{\gamma}{\epsilon_{3}}\int_{0}^{L}f_{x}^{1}v_{x}dx+i\lambda^{-1} c\int_{0}^{L}u^{3}v_{x}dx&=&i\lambda^{-1}\int_{0}^{L}f^{5}v_{x}dx.
  \label{CA14}
\end{eqnarray}
We have from (\ref{CA6}), the following equation
\begin{equation*}
i\lambda u^{2}_{x}+\frac{\mu}{\xi\epsilon_{3}}u^{1}_{x}+bu^{2}_{x}=f_{x}^{4}.
\end{equation*}
Multiplying this later with $-i\xi \lambda^{-1}\bar{v_{x}}$,  we integrate over $(0,L)$ then summing with (\ref{CA6}), one has
\begin{eqnarray*}
-b\xi\int_{0}^{L}u^{2}_{x}v_{x}dx+c\int_{0}^{L}u^{3}v_{x}dx+\frac{\gamma}{\epsilon_{3}}\int_{0}^{L}f^{1}v_{x}dx
&=&\int_{0}^{L}f^{5}v_{x}dx\\
&&-\xi\int_{0}^{L}f^{4}_{x}v_{x}dx.
\end{eqnarray*}
Using the compatibility condition $\xi u^{2}_{x}=u^{3}-\frac{\gamma}{\epsilon_{3}}v_{x}$ in this later gives the following 
\begin{eqnarray}\label{CA15}
b \frac{\gamma}{\epsilon_{3}}\int_{0}^{L}|v_{x}|^{2}dx&=&(b-c)\int_{0}^{L}u^{3}v_{x}dx-\frac{\gamma}{\epsilon_{3}}\int_{0}^{L}f^{1}_{x}v_{x}dx
\notag\\
&&+\int_{0}^{L}f^{5}v_{x}dx-\xi \int_{0}^{L}f_{x}^{4}v_{x}dx.
\end{eqnarray}
On the other side, using the compatibility condition for $F\in \mathcal{H}$, $\xi f^{4}_{x}-f^{5}+\frac{\gamma}{\epsilon_{3}}f^{1}_{x}=0$  in (\ref{CA15}) we get 
\begin{equation*}
b \dfrac{\gamma}{\epsilon_{3}}\int_{0}^{L}|v_{x}|^{2}dx=(b-c)\int_{0}^{L}u^{3}v_{x}dx,
\end{equation*}
based on the Young's inequality, we arrive from the right hand side of the previous equality to the following
\begin{eqnarray*}
  |b-c|\int_{0}^{L}|u^{3}||v_{x}|dx &\leq& \frac{(b-c)^{2} r_{1}}{2}\int_{0}^{1}|u^{3}|^{2}dx+\frac{1}{2r_{1}}\int_{0}^{1}|v_{x}|^{2}dx \\
  &\leq& \frac{(b-c)^{2} r_{1}}{2} \mathcal{S}_{4}
   \|U\|_{\mathcal{H}}\|F\|_{\mathcal{H}}+\frac{1}{2r_{1}}\int_{0}^{L}|v_{x}|^{2}dx.
\end{eqnarray*}
Taking $r_{1}=\dfrac{\epsilon_{3}}{b\gamma}$, we obtain
\begin{equation}
\alpha \int_{0}^{L}|v_{x}|^{2}dx\leq \alpha(b-c)^{2} \dfrac{\epsilon_{3}^{2}}{b^{2}\gamma^{2}}\mathcal{S}_{4}\|U\|_{\mathcal{H}}\|F\|_{\mathcal{H}}.
\end{equation}
That achieve the result.
\end{proof}
\begin{lem}\label{LEMMA3}
For $m\in (0,1)$ and under the hypothesis $(\mathrm{\mathbf{H}})$, 
 the solution\\
  $U=(v,z,u^{1},u^{2},u^{3},w,k)^\top\in \mathcal{D}(\mathcal{A})$ satisfies 
\begin{equation}
\rho \int_{0}^{L}|z|^{2}dx \leq \mathcal{S}_{8} \|U\|_{\mathcal{H}}\|F\|_{\mathcal{H}},
\end{equation}
where $\mathcal{S}_{8}=\mathcal{S}_{7}+1+\dfrac{\gamma}{2} \left(\mathcal{S}_{4}+ \mathcal{S}_{7}\right)+\dfrac{\gamma}{\sqrt{\alpha}\sqrt{\epsilon_{3}}}+\dfrac{a^{2}}{2\rho}\mathcal{S}_{1}+\dfrac{1}{\sqrt{\rho}}$.
\end{lem}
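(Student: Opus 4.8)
The plan is to obtain an explicit identity for $\rho\int_0^L|z|^2dx$ by testing the momentum equation $(\ref{CA4})$ against $\rho\bar v$, and then to bound every resulting term by the estimates already available in Lemmas~\ref{CA10} and \ref{CA11}. Concretely, I would multiply $(\ref{CA4})$ by $\rho\bar v$ and integrate over $(0,L)$. Integrating the terms containing $v_{xx}$ and $u^3_x$ by parts produces boundary contributions at $x=0$, which vanish because $v(0)=0$, and at $x=L$, which combine into $(\alpha v_x(L)+\gamma u^3(L))\overline{v(L)}$ and hence also vanish thanks to the boundary condition $\alpha v_x(L)+\gamma u^3(L)=0$ imposed in $\mathcal D(\mathcal A)$. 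Using $(\ref{CA3})$ in the form $i\lambda v=z+f^1$ — so that $i\lambda\bar v=-(\bar z+\bar f^1)$ and therefore $\rho\int_0^L i\lambda z\,\bar v\,dx=-\rho\int_0^L|z|^2dx-\rho\int_0^L z\bar f^1\,dx$ — and taking real parts, one arrives at the identity
\[ \rho\int_0^L|z|^2dx=\alpha\int_0^L|v_x|^2dx+\gamma\,\Re\!\int_0^L u^3\bar v_x\,dx+a\,\Re\!\int_0^L w_x\bar v\,dx-\rho\,\Re\!\int_0^L z\bar f^1\,dx-\rho\,\Re\!\int_0^L f^2\bar v\,dx. \]

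It then remains to estimate the right-hand side by a multiple of $\|U\|_{\mathcal H}\|F\|_{\mathcal H}$. The first term is exactly Lemma~\ref{CA11}, contributing the summand $\mathcal S_7$. For $\gamma\,\Re\int_0^L u^3\bar v_x\,dx$ I would apply Young's inequality and then insert $\int_0^L|u^3|^2dx\le\mathcal S_4\|U\|_{\mathcal H}\|F\|_{\mathcal H}$ (Lemma~\ref{CA10}) together with the bound on $\int_0^L|v_x|^2dx$ from Lemma~\ref{CA11}, which yields the summand $\tfrac{\gamma}{2}(\mathcal S_4+\mathcal S_7)$. For $a\,\Re\int_0^L w_x\bar v\,dx$, Young's inequality with weight $\rho$ gives $\tfrac{a^2}{2\rho}\int_0^L|w_x|^2dx+\tfrac{\rho}{2}\int_0^L|v|^2dx$; the first piece is controlled by $\int_0^L|w_x|^2dx\le\mathcal S_1\|U\|_{\mathcal H}\|F\|_{\mathcal H}$ (Lemma~\ref{CA10}), giving $\tfrac{a^2}{2\rho}\mathcal S_1$, while $\|v\|_{L^2}$ is controlled through $(\ref{CA3})$ and $|\lambda|\ge1$ together with Poincaré's inequality and Lemma~\ref{CA11}. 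Finally the two data integrals $\rho\,\Re\int_0^L z\bar f^1\,dx$ and $\rho\,\Re\int_0^L f^2\bar v\,dx$ are handled by Cauchy--Schwarz, using the weighted-norm comparisons $\sqrt{\rho}\,\|z\|_{L^2}\le\|U\|_{\mathcal H}$, $\sqrt{\rho}\,\|f^2\|_{L^2}\le\|F\|_{\mathcal H}$, $\sqrt{\alpha}\,\|f^1_x\|_{L^2}\le\|F\|_{\mathcal H}$ and $\sqrt{\epsilon_3}\,\|u^3\|_{L^2}\le\|U\|_{\mathcal H}$ (and again $|\lambda|\ge1$), which accounts for the remaining summands $1$, $\tfrac{\gamma}{\sqrt{\alpha}\sqrt{\epsilon_3}}$ and $\tfrac{1}{\sqrt{\rho}}$. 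Summing all contributions gives $\rho\int_0^L|z|^2dx\le\mathcal S_8\|U\|_{\mathcal H}\|F\|_{\mathcal H}$.

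The main obstacle is the boundary-term analysis: one has to verify that the two surface terms generated by the integrations by parts recombine precisely into $(\alpha v_x(L)+\gamma u^3(L))\overline{v(L)}$, so that the coupled boundary condition at the free end of the beam makes them vanish; without this cancellation the displayed identity would not close and no clean estimate would follow. Everything else is routine bookkeeping of constants, made possible by the fact that all the quadratic quantities appearing on the right-hand side ($\int|v_x|^2$, $\int|u^3|^2$, $\int|w_x|^2$) have already been absorbed into $\|U\|_{\mathcal H}\|F\|_{\mathcal H}$ by the two preceding lemmas, and by the decay gain coming from $|\lambda|\ge1$ which keeps the data terms of lower order.
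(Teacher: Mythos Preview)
Your approach is correct and essentially the same as the paper's, with one cosmetic variation: you test $(\ref{CA4})$ against $\rho\bar v$, whereas the paper tests against $-i\rho\lambda^{-1}\bar z$. Since $i\lambda v=z+f^1$ these multipliers differ only by a data term, and the boundary cancellation $(\alpha v_x(L)+\gamma u^3(L))\overline{v(L)}=0$ that you identify is exactly what the paper uses (with $\bar z(L)$ in place of $\bar v(L)$).

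The only substantive difference is in the $w_x$ term: the paper obtains $a|\lambda|^{-1}\int|w_x||z|\,dx$, applies Young with weight $r_2=a/\rho$, and \emph{absorbs} the resulting $\tfrac{\rho}{2}\int|z|^2$ back into the left-hand side (this is why their final displayed inequality carries $\tfrac{\rho}{2}$ on the left). Your route instead produces $a\int|w_x||v|\,dx$ and controls $\|v\|$ via Poincar\'e together with Lemma~\ref{CA11}, avoiding the absorption step. Both work, but your version necessarily picks up an extra contribution of the form $\tfrac{\rho C_P^2}{2\alpha}\mathcal S_7$ from the $\tfrac{\rho}{2}\int|v|^2$ piece, so the constants you obtain will not sum \emph{exactly} to the stated $\mathcal S_8$. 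This is harmless for the use of the lemma (and, in fairness, the paper's own final line does not match its stated $\mathcal S_8$ either), but you should not claim to recover the identical constant.
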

\begin{proof}
We multiply the equation (\ref{CA4}) by $-\rho i\lambda^{-1}\bar{z}$ then we integrate over $(0,L)$, we obtain
\begin{eqnarray*}
  \rho \int_{0}^{L}|z|^{2}dx+i\lambda^{-1}\alpha \int_{0}^{L}v_{xx}z dx+i \lambda^{-1}\gamma\int_{0}^{L}u_{x}^{3}\bar{z}dx\\
 -i\lambda^{-1}a\int_{0}^{L}w_{x}z dx&=&-i\lambda^{-1}\rho \int_{0}^{L}f^{2}zdx,
\end{eqnarray*}
it follows
\begin{eqnarray}
\rho \int_{0}^{L}|z|^{2}dx&=&i\lambda^{-1}\alpha \int_{0}^{L}v_{x}z_{x}dx+i \lambda^{-1}\gamma\int_{0}^{L}u^{3}\bar{z_{x}}dx\notag\\
&&+i\lambda^{-1}a\int_{0}^{L}w_{x}z dx
-i\lambda^{-1}\rho \int_{0}^{L}f^{2}zdx.\label{CA1A6}
\end{eqnarray}
Inserting the equation (\ref{CA3}) in (\ref{CA1A6}), one has
\begin{eqnarray*}
  \rho \int_{0}^{L}|z|^{2}dx &=& \alpha \int_{0}^{L}|v_{x}|^{2}dx-i\lambda^{-1}\alpha \int_{0}^{L}v_{x}f_{x}^{1}dx+\gamma\int_{0}^{L}u^{3}v_{x}dx\\
  &&-i\gamma \lambda^{-1}\int_{0}^{L}u^{3}f_{x}^{1}dx
  +ia\lambda^{-1}\int_{0}^{L}w_{x}zdx-i\lambda^{-1}\rho \int_{0}^{L}f^{2}zdx,
\end{eqnarray*}
then, we get
\begin{eqnarray}\label{CA18}
  \rho\int_{0}^{L}|z|^{2}dx&\leq& \alpha \int_{0}^{L}|v_{x}|^{2}dx+\alpha|\lambda^{-1}| \int_{0}^{1}|v_{x}||f_{x}^{1}|dx+\gamma \int_{0}^{L}|u^{3}||v_{x}|dx\notag\\
  &&+\gamma |\lambda^{-1}|\int_{0}^{L}|u^{3}||f_{x}^{1}|dx
  +a|\lambda^{-1}|\int_{0}^{L}|w_{x}||z|dx\notag\\
  &&+\rho|\lambda^{-1}|\int_{0}^{L}|f^{2}||z|dx.
\end{eqnarray}
We shall give an estimation for each term of the inequality (\ref{CA18}), hence we start by using Young's inequality, lemma \ref{CA10} and lemma \ref{CA11}, to give the following
\begin{eqnarray*}
  \gamma \int_{0}^{L}|u^{3}||v_{x}|dx&\leq& \dfrac{\gamma}{2}\int_{0}^{L}|u^{3}|^{2}dx+\dfrac{\gamma}{2}  \mathcal{S}_{7}\|U\|_{\mathcal{H}}\|F\|_{\mathcal{H}}\\
  &\leq&\dfrac{\gamma}{2} \left(\mathcal{S}_{4}+\mathcal{S}_{7}\right)\|U\|_{\mathcal{H}}\|F\|_{\mathcal{H}}
\end{eqnarray*}
and
\begin{eqnarray}
   a\int_{0}^{L}|w_{x}||z|dx&\leq&\dfrac{ar_{2}}{2}\int_{0}^{L}|w_{x}|^{2}dx+ \dfrac{a}{2r_{2}}\int_{0}^{L}|z|^{2}dx\notag\\
   &\leq&\dfrac{a r_{2}}{2}\mathcal{S}_{1}\|U\|_{\mathcal{H}}\|F\|_{\mathcal{H}}
   +\dfrac{a}{2r_{2}}\int_{0}^{L}|z|^{2}dx.\label{1}
\end{eqnarray}
On the other side, since we have $\sqrt{\alpha}\|v_{x}\|\leq \|U\|_{\mathcal{H}}$, $\sqrt{\alpha}\|\|f_{x}^{1}\|\leq \|F\|_{\mathcal{H}}$, $\sqrt{\epsilon_{3}}\|u^{3}\|\leq \|U\|_{\mathcal{H}}$ and $\sqrt{\rho}\|z\|\leq \|U\|_{\mathcal{H}}$, then one can declare that we have
\begin{eqnarray}
|\lambda^{-1}|\alpha \int_{0}^{L}|v_{x}||f_{x}^{1}|dx&\leq& \|U\|_{\mathcal{H}}\|F\|_{\mathcal{H}}, \notag\\
\gamma |\lambda^{-1}|\int_{0}^{L}|u^{3}||f_{x}^{1}|&\leq& \dfrac{\gamma}{\sqrt{\alpha}\sqrt{\epsilon_{3}}}\|U\|_{\mathcal{H}}\|F\|_{\mathcal{H}},\notag\\
 |\lambda^{-1}|\int_{0}^{L}|f^{2}||z|dx &\leq&\dfrac{1}{\rho}\|U\|_{\mathcal{H}}\|F\|_{\mathcal{H}}. \label{CAS25}
\end{eqnarray}
By choosing $r_{2}=\dfrac{a}{\rho}$, and inserting all the previous inequalities in (\ref{CA18}), we give rise to the following inequality 
\begin{eqnarray*}
\dfrac{\rho}{2} \int_{0}^{L}|z|^{2}dx&\leq&\left(\mathcal{S}_{7}+1+\dfrac{\gamma}{2} \left(\mathcal{S}_{4}
+ \mathcal{S}_{7}\right)+\dfrac{\gamma}{\sqrt{\alpha}\sqrt{\epsilon_{3}}}+\dfrac{a^{2}}{2\rho}\mathcal{S}_{1}
+\dfrac{1}{\rho}\right)
\|U\|_{\mathcal{H}}\|F\|_{\mathcal{H}}.
\end{eqnarray*}
Thus, the proof is completed.
\end{proof}
In the remaining part of this subsection, we need to estimate  the term $\mu\int_{0}^{L}|u^{1}|^{2}dx$, thereby, we give the following lemma:
\begin{lem}\label{LEMMA4}
For $m\in (0,1)$,  assume that the hypothesis $(\mathrm{\mathbf{H}})$ holds, we have the following estimation
\begin{equation*}
  \mu \int_{0}^{L}|u^{1}|^{2}dx\leq \mathcal{S}_{9}\|U\|_{\mathcal{H}}\|F\|_{\mathcal{H}},
\end{equation*}
where $\mathcal{S}_{9}=2\xi \epsilon_{3} \left( \mathcal{S}_{3}+ \dfrac{b^{2}\xi\epsilon_{3}}{2} \mathcal{S}_{3}+2\dfrac{1}{\sqrt{\xi \epsilon \mu}}+ \left(\left(\dfrac{1}{\xi}+\dfrac{\gamma }{2\xi \epsilon_{3}}\right)\mathcal{S}_{4}+\dfrac{\gamma}{2\alpha \xi \epsilon_{3}}\mathcal{S}_{7}\right)\right).$
\end{lem}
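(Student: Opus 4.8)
The plan is to isolate $u^{1}$ from the resolvent equation \eqref{CA6}, test it against $\bar u^{1}$, and dispose of the only term in which $\lambda$ enters with a positive power by feeding in the conjugate of equation \eqref{CA5}.

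First I would rewrite \eqref{CA6} as $\frac{\mu}{\xi\epsilon_{3}}u^{1}=f^{4}-(i\lambda+b)u^{2}$, multiply by $\bar u^{1}$, integrate over $(0,L)$ and take real parts, obtaining
\[
\frac{\mu}{\xi\epsilon_{3}}\int_{0}^{L}|u^{1}|^{2}\,dx=\Re\int_{0}^{L}f^{4}\bar u^{1}\,dx-\Re\int_{0}^{L}i\lambda u^{2}\bar u^{1}\,dx-b\,\Re\int_{0}^{L}u^{2}\bar u^{1}\,dx .
\]
The first and third terms are already of order $\|U\|_{\mathcal{H}}\|F\|_{\mathcal{H}}$, but $\int_{0}^{L}i\lambda u^{2}\bar u^{1}\,dx$ is not. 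Since $\lambda$ is real, $\int_{0}^{L}i\lambda u^{2}\bar u^{1}\,dx=\int_{0}^{L}u^{2}\,\overline{(-i\lambda u^{1})}\,dx$, and \eqref{CA5} yields $i\lambda u^{1}=u^{2}-u^{3}_{x}+f^{3}$. Substituting, then integrating by parts in $\int_{0}^{L}u^{2}\,\overline{u^{3}_{x}}\,dx$ — whose boundary contributions vanish because $u^{2}(0)=u^{2}(L)=0$ — and finally invoking the compatibility condition $\xi u^{2}_{x}=u^{3}-\frac{\gamma}{\epsilon_{3}}v_{x}$, the identity becomes
\[
\frac{\mu}{\xi\epsilon_{3}}\int_{0}^{L}|u^{1}|^{2}\,dx=\Re\int_{0}^{L}f^{4}\bar u^{1}\,dx+\int_{0}^{L}|u^{2}|^{2}\,dx+\frac{1}{\xi}\int_{0}^{L}|u^{3}|^{2}\,dx-\frac{\gamma}{\xi\epsilon_{3}}\Re\int_{0}^{L}v_{x}\bar u^{3}\,dx+\Re\int_{0}^{L}u^{2}\bar f^{3}\,dx-b\,\Re\int_{0}^{L}u^{2}\bar u^{1}\,dx .
\]

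Next I would estimate the right-hand side term by term. The two source terms are bounded by Cauchy--Schwarz together with the elementary norm inequalities $\sqrt{\mu}\,\|u^{1}\|\le\|U\|_{\mathcal{H}}$, $\sqrt{\xi\epsilon_{3}}\,\|u^{2}\|\le\|U\|_{\mathcal{H}}$, $\sqrt{\xi\epsilon_{3}}\,\|f^{4}\|\le\|F\|_{\mathcal{H}}$ and $\sqrt{\mu}\,\|f^{3}\|\le\|F\|_{\mathcal{H}}$, giving together a contribution of the form $\tfrac{2}{\sqrt{\xi\epsilon_{3}\mu}}\|U\|_{\mathcal{H}}\|F\|_{\mathcal{H}}$; the terms $\int_{0}^{L}|u^{2}|^{2}$ and $\int_{0}^{L}|u^{3}|^{2}$ are handled directly by \eqref{CA26} and \eqref{CA27} of Lemma \ref{CA10}; the mixed term $\int_{0}^{L}v_{x}\bar u^{3}$ is treated with Young's inequality followed by Lemma \ref{CA11} and \eqref{CA27}; and the last term is treated with Young's inequality using a parameter tuned so that the resulting $\tfrac12\frac{\mu}{\xi\epsilon_{3}}\int_{0}^{L}|u^{1}|^{2}$ can be absorbed into the left-hand side, the remaining multiple of $\int_{0}^{L}|u^{2}|^{2}$ being bounded once more by \eqref{CA26}. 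Collecting the constants and multiplying through by $2\xi\epsilon_{3}$ yields the asserted inequality with the stated value of $\mathcal{S}_{9}$.

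The main obstacle is the $\lambda$-dependence: the quantity $i\lambda u^{2}$ is not controlled by the dissipation recorded in Lemma \ref{CA10}, so the argument hinges on the observation that pairing it with $\bar u^{1}$ lets one trade it, via the conjugate of \eqref{CA5}, for a combination of the already-estimated norms $\|u^{2}\|$, $\|u^{3}\|$ and $\|v_{x}\|$, the derivative $u^{3}_{x}$ that appears along the way being removed by the integration by parts and the compatibility condition. The only other point requiring care is the bookkeeping in the Young inequality applied to the $b\int_{0}^{L}u^{2}\bar u^{1}$ term, whose parameter must be chosen precisely so that the $|u^{1}|^{2}$ contribution is genuinely absorbable.
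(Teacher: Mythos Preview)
Your proposal is correct and follows essentially the same route as the paper: both arguments combine \eqref{CA6} tested against $\bar u^{1}$ with \eqref{CA5} tested against $\bar u^{2}$ so that the $i\lambda$ contributions cancel upon taking real parts, then expand $\int u^{3}\bar u^{2}_{x}$ via the compatibility condition and estimate the remaining terms with Young's inequality, Lemma~\ref{CA10}, and Lemma~\ref{CA11}, choosing the Young parameter in the $b\int u^{2}\bar u^{1}$ term exactly as you describe to absorb the resulting $|u^{1}|^{2}$ contribution. The only cosmetic difference is that you substitute for $-i\lambda u^{1}$ explicitly whereas the paper adds the two tested equations directly; the computations and the resulting constant $\mathcal S_{9}$ coincide.
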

\begin{proof}
Multiplying (\ref{CA6}) by $\bar{u_{1}}$, then integrating over $(0,L)$, we get
\begin{equation*}
  i\lambda \int_{0}^{L}u^{2}\bar{u^{1}}dx+\dfrac{\mu}{\xi \epsilon_{3}}\int_{0}^{L}|u^{1}|^{2}dx+b \int_{0}^{L}u^{2}\bar{u^{1}}dx=\int_{0}^{L}f^{4}\bar{u^{1}}dx.
\end{equation*}
Multiplying (\ref{CA5}) by $\bar{u^{2}}$ integrating over $(0,L)$, we get
\begin{equation*}
  i\lambda \int_{0}^{L}u^{1}\bar{u^{2}}dx-\int_{0}^{L}|u^{2}|^{2}dx-\int_{0}^{L}u^{3}\bar{u^{2}_{x}}dx=\int_{0}^{L}f^{3}\bar{u^{2}}dx.
\end{equation*}
Summing the two previous results and taking the real part of the two identities, we obtain
\begin{eqnarray}\label{CA19}
\dfrac{\mu}{\xi \epsilon_{3}}\int_{0}^{L}|u^{1}|^{2}dx&=&\int_{0}^{L}|u^{2}|^{2}dx-\Re{\left(b\int_{0}^{L}u^{2}\bar{u^{1}}dx\right)}+
\Re{\left(\int_{0}^{L}u^{3}\bar{u_{x}^{2}}dx\right)} \notag\\
&&+\Re{\left(\int_{0}^{L}f^{4}\bar{u^{1}}dx\right)}+\Re{\left(\int_{0}^{L}f^{3}\bar{u^{2}}dx\right)}.
\end{eqnarray}
We want to estimate each part of the inequality (\ref{CA19}), for that we make use of Young's inequality and lemma \ref{CA10} to write
\begin{eqnarray*}
  \left|\Re{\left(b\int_{0}^{L}u^{2}\bar{u^{1}}dx\right)}\right| &\leq& \dfrac{b r_{3}}{2}\int_{0}^{L}|u^{2}|^{2} dx+\dfrac{b}{2r_{3}}\int_{0}^{L}|u^{1}|^{2} dx,\notag\\
  &\leq& \dfrac{b r_{3}}{2}\mathcal{S}_{3}\|U\|_{\mathcal{H}}\|F\|_{\mathcal{H}}+\dfrac{b}{2r_{3}}\int_{0}^{L}|u^{1}|^{2} dx.
\end{eqnarray*}
Since we have $\sqrt{\xi \epsilon_{3}}\|f^{4}\| \leq \|F\|_{\mathcal{H}}$, $\sqrt{\mu}\|f^{3}\| \leq \|F\|_{\mathcal{H}}$, $\sqrt{\mu}\|u^{1}\| \leq \|U\|_{\mathcal{H}}$ and $\sqrt{\xi \epsilon_{3}}\|u^{2}\| \leq \|U\|_{\mathcal{H}}$, then we can give the following integrals
\begin{eqnarray*}
  \Re{\left(\int_{0}^{L}f^{3}\bar{u^{2}}dx\right)}&\leq& \dfrac{1}{\sqrt{\xi \epsilon \mu}}
  \|U\|_{\mathcal{H}}\|F\|_{\mathcal{H}},\\
\Re{\left(\int_{0}^{L}f^{4}\bar{u^{1}}dx\right)}&\leq& \dfrac{1}{\sqrt{\xi \epsilon \mu}}
\|U\|_{\mathcal{H}}\|F\|_{\mathcal{H}}.
\end{eqnarray*}
Using the compatibility condition $\xi u^{2}_{x}-u^{3}+\dfrac{\gamma}{\epsilon_{3}}v_{x}=0$ and Young's inequality, we can write
\begin{eqnarray*}
\Re{\left(\int_{0}^{L}u^{3}\bar{u_{x}^{2}}dx\right)}&=&\dfrac{1}{\xi}\int_{0}^{L}|u^{3}|^{2}dx-\dfrac{\gamma}{\xi \epsilon_{3}}\int_{0}^{L}u^{3}v_{x}dx\notag\\
&\leq& \dfrac{1}{\xi}\mathcal{S}_{4}\|U\|_{\mathcal{H}}\|F\|_{\mathcal{H}}
+\dfrac{\gamma }{2\xi \epsilon_{3}}\int_{0}^{L}|u^{3}|^{2}dx
+\dfrac{\gamma}{2\xi \epsilon_{3}}\int_{0}^{L}|v_{x}|^{2}dx\notag\\
&\leq&\left(\left(\dfrac{1}{\xi}+\dfrac{\gamma }{2\xi \epsilon_{3}}\right)
\mathcal{S}_{4}+\dfrac{\gamma}{2\alpha \xi \epsilon_{3}}\mathcal{S}_{7}\right)
\|U\|_{\mathcal{H}}\|F\|_{\mathcal{H}}.
\end{eqnarray*}
Using the previous estimations of the right hand side parts of (\ref{CA19}), in (\ref{CA19}), then  choosing $r_{3}=\dfrac{\mu}{b\xi \epsilon_{3}}$, we get
\begin{small}
\begin{eqnarray*}
\dfrac{\mu}{\xi \epsilon_{3}}\int_{0}^{L}|u^{1}|^{2}dx&\leq&2
 \left( \mathcal{S}_{3}+ \dfrac{b^{2}\xi\epsilon_{3}}{2} \mathcal{S}_{3}+2\dfrac{1}{\sqrt{\xi \epsilon \mu}}\right.\\
 &&\left.\left(\dfrac{1}{\xi}+\dfrac{\gamma }{2\xi \epsilon_{3}}\right)\mathcal{S}_{4}
+\dfrac{\gamma}{2\alpha \xi \epsilon_{3}}\mathcal{S}_{7}\right)\|U\|_{\mathcal{H}}\|F\|_{\mathcal{H}}.
\end{eqnarray*}
\end{small}
Thus the result of the lemma holds true.
\end{proof}
\begin{proof}[Proof of theorem \ref{THEOREM1}]
First, we prove the first condition $(\mathrm{\mathbf{C1}})$ by using a contradiction argument. Suppose that the condition $(\mathrm{\mathbf{C1}})$ doesn't hold, then there exists $l\in \mathbb{R}$ such that $il\notin\rho(\mathcal{A})$, it follows that there exists a sequence $\{(\lambda_{n},U_{n})_{n\geq1}\}\subset \mathbb{R}\times\mathcal{D}(\mathcal{A})$ such that $|\lambda_{n}|\rightarrow l$ as $n\rightarrow \infty $ with $\|U_{n}\|_{\mathcal{H}}=1$ and 
\begin{equation*}
i\lambda_{n}U_{n}-\mathcal{A}U_{n}=F_{n}\rightarrow 0, \;\; in \;\; \mathcal{H}, \; as \;\; n \rightarrow \infty. 
\end{equation*}
Using lemma \ref{CA10}, lemma \ref{CA11}, lemma \ref{LEMMA3} and lemma \ref{LEMMA4}, we obtain $\|U_{n}\|_{\mathcal{H}}\rightarrow 0$ which contradict $\|U_{n}\|_{\mathcal{H}}=1$, thus the condition $(\mathrm{\mathbf{C1}})$ is satisfied.
Now, from lemma \ref{CA10}, lemma \ref{CA11}, lemma \ref{LEMMA3} and lemma \ref{LEMMA4} and  for every $U\in \mathcal{D}(\mathcal{A})$ we have
\begin{equation*}
\|U\|_{\mathcal{H}} \leq K \|F\|_{\mathcal{H}}\|U\|_{\mathcal{H}},
\end{equation*}
it follows
\begin{equation*}
\|U\|_{\mathcal{H}}=\|(i\lambda I-\mathcal{A})^{-1}F\|_{\mathcal{H}}\leq C\|F\|_{\mathcal{H}}, \;\; for \; every \;\; F\in \mathcal{H}.
\end{equation*}
Hence, the second condition $(\mathrm{\mathbf{C2}})$ holds true. thus we can apply Hung and Pruss theorem to conclude the result of theorem \ref{THEOREM1}.
\end{proof}
\subsection{The electrical field component in $z$-direction is damped $\mathbf{b=0}$ and $\mathbf{c\neq0}$}
We give the following result considering a piezoelectric beam with Coleman-Gurtin thermal 
law and the electrical field component in $z$-direction is damped :
\begin{thm}\label{THEOREM2}
For $m\in(0,1)$, under the hypothesis $(\mathrm{\mathbf{H}})$, the $\mathcal{C}_{0}-$semigroup of contractions $(e^{t\mathcal{A}})_{t\geq0}$ is exponentially stable, i.e. there exists $M\geq 1$ and $\epsilon >0$ such  that
\begin{equation*}
  \|e^{t\mathcal{A}}U_{0}\|_{\mathcal{H}}\leq Me^{-\epsilon t} \|U_{0}\|_{\mathcal{H}}, \;\; \mathrm{for} \;\; t \geq 0.
\end{equation*}
\end{thm}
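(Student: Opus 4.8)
The plan is to follow the same route as the proof of Theorem \ref{THEOREM1}: verify the two conditions $(\mathrm{\mathbf{C1}})$ and $(\mathrm{\mathbf{C2}})$ of the criterion of Hung and Pr\"uss, by deriving, for $(\lambda,U)\in\mathbb{R}^{\ast}\times\mathcal{D}(\mathcal{A})$ with $|\lambda|\ge1$ and $(i\lambda I-\mathcal{A})U=F$ (so that \eqref{CA3}--\eqref{CA9} hold), an inequality of the form $\|U\|_{\mathcal{H}}^{2}\le K\,\|U\|_{\mathcal{H}}\|F\|_{\mathcal{H}}$. Once this is in place, $(\mathrm{\mathbf{C2}})$ is immediate and $(\mathrm{\mathbf{C1}})$ follows by the usual contradiction argument (a normalised sequence $\{U_{n}\}\subset\mathcal{D}(\mathcal{A})$ with $i\lambda_{n}U_{n}-\mathcal{A}U_{n}\to0$ would be forced to satisfy $\|U_{n}\|_{\mathcal{H}}\to0$), and the criterion of Hung and Pr\"uss then yields the exponential decay. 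The first step is the dissipativity identity: taking the $\mathcal{H}$-inner product of $(i\lambda I-\mathcal{A})U=F$ with $U$ and using $(\mathrm{\mathbf{H}})$; since now $b=0$, this controls $\int_{0}^{L}|u^{3}|^{2}$, $\int_{0}^{L}|w_{x}|^{2}$, $\int_{0}^{L}|w|^{2}$ (Poincar\'e), $\int_{0}^{\infty}\!\int_{0}^{L}\sigma(s)|\kappa_{x}|^{2}$ and $\int_{0}^{L}|\Lambda^{m}_{x}|^{2}$ by $\|U\|_{\mathcal{H}}\|F\|_{\mathcal{H}}$, i.e.\ it reproduces Lemma \ref{CA10} except for the bound on $\int_{0}^{L}|u^{2}|^{2}$, which is exactly what is lost when $b=0$.

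The heart of the proof is the estimate of $\int_{0}^{L}|v_{x}|^{2}$. The transparent pointwise identity $v_{x}=\frac{\epsilon_{3}(b-c)}{b\gamma}u^{3}$ underlying Lemma \ref{CA11} is not available here, since it was obtained by dividing by $b$. Instead the plan is to extract a term $\frac{\gamma}{\epsilon_{3}}\int_{0}^{L}|v_{x}|^{2}$ by testing \eqref{CA12} (i.e.\ \eqref{CA7} after \eqref{CA3} has been inserted) against $i\lambda^{-1}\overline{v_{x}}$, together with a further identity obtained from the momentum equation \eqref{CA4} (tested against $\rho\overline{v}$, or against $-\rho i\lambda^{-1}\overline{z}$), integrating by parts and using that the boundary contributions cancel thanks to $v(0)=0$ and $\alpha v_{x}(L)+\gamma u^{3}(L)=0$. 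The non-energy quantities that arise ($u^{1}_{x}$ and $z_{x}$) would be re-expressed through \eqref{CA6} (with $b=0$), \eqref{CA3} and the compatibility condition $\xi u^{2}_{x}=u^{3}-\frac{\gamma}{\epsilon_{3}}v_{x}$, while the terms in $f^{1},f^{4},f^{5}$ are handled with the compatibility condition for $F$, namely $\xi f^{4}_{x}-f^{5}+\frac{\gamma}{\epsilon_{3}}f^{1}_{x}=0$; Young's inequality and the Step~1 bounds then let one absorb the residual $\int_{0}^{L}|v_{x}|^{2}$ and $\int_{0}^{L}|v|^{2}$ on the right-hand side and reach $\alpha\int_{0}^{L}|v_{x}|^{2}\le\mathcal{S}\,\|U\|_{\mathcal{H}}\|F\|_{\mathcal{H}}$. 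Any factor $|\lambda^{-1}|$ produced along the way is harmless since $|\lambda|\ge1$.

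With $v_{x}$ controlled, the remaining components follow as in Section 3.1. The bound on $\int_{0}^{L}|u^{2}|^{2}$ is then free: the compatibility condition $\xi u^{2}_{x}=u^{3}-\frac{\gamma}{\epsilon_{3}}v_{x}$ bounds $\int_{0}^{L}|u^{2}_{x}|^{2}$ by the already-controlled $\int_{0}^{L}|u^{3}|^{2}$ and $\int_{0}^{L}|v_{x}|^{2}$, and since $u^{2}\in H^{1}_{0}(0,L)$, Poincar\'e's inequality gives $\int_{0}^{L}|u^{2}|^{2}$. Next, $\int_{0}^{L}|u^{1}|^{2}$ is obtained as in Lemma \ref{LEMMA4} --- multiply \eqref{CA6} (with $b=0$) by $\overline{u^{1}}$, multiply \eqref{CA5} by $\overline{u^{2}}$, add, take real parts, and use the compatibility condition, the bound on $\int_{0}^{L}|u^{2}|^{2}$ just obtained and Young --- and $\int_{0}^{L}|z|^{2}$ as in Lemma \ref{LEMMA3}, by testing \eqref{CA4} with $-\rho i\lambda^{-1}\overline{z}$ and inserting \eqref{CA3}. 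Summing these estimates gives $\|U\|_{\mathcal{H}}^{2}\le K\,\|U\|_{\mathcal{H}}\|F\|_{\mathcal{H}}$, hence a uniform bound on $\|(i\lambda I-\mathcal{A})^{-1}\|_{\mathcal{L}(\mathcal{H})}$, and the proof concludes as above.

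I expect the main obstacle to be precisely the $v_{x}$ estimate of the second step. Because $b=0$, the thermal-type dissipation no longer controls the $x$-direction electrical field $u^{2}$, the clean relation that drives Lemma \ref{CA11} degenerates, and $\int_{0}^{L}|v_{x}|^{2}$ must be coaxed out of a careful combination of \eqref{CA4}--\eqref{CA7}, the compatibility conditions for $U$ and for $F$, and the boundary condition at $x=L$, keeping close track of which intermediate quantities (here $u^{1}_{x}$ and $z_{x}$) are not a priori in the energy norm and therefore have to be re-expressed through the equations. Everything downstream of that estimate --- the bounds on $u^{2}$, $u^{1}$ and $z$ --- is a routine adaptation of the lemmas already proved in Section 3.1.
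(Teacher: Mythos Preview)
Your overall architecture and the downstream steps (bounds on $u^{2}$ via the compatibility condition and Poincar\'e, then $u^{1}$ and $z$ as in Lemmas~\ref{LEMMA4} and~\ref{LEMMA3}) match the paper. The genuine gap is the $v_{x}$-estimate. When $b=0$, testing \eqref{CA12} against $i\lambda^{-1}\overline{v_{x}}$ does produce the term $\tfrac{\gamma}{\epsilon_{3}}\int_{0}^{L}|v_{x}|^{2}$, but as soon as you re-express $u^{1}_{x}$ through \eqref{CA6} (with $b=0$) you get $\tfrac{\mu}{\epsilon_{3}}u^{1}_{x}=\xi(f^{4}_{x}-i\lambda u^{2}_{x})$, and inserting the compatibility $\xi u^{2}_{x}=u^{3}-\tfrac{\gamma}{\epsilon_{3}}v_{x}$ returns exactly $-\tfrac{\gamma}{\epsilon_{3}}\int_{0}^{L}|v_{x}|^{2}$: the good term cancels identically, and after using the $F$-compatibility what survives is only $c\int_{0}^{L}u^{3}\overline{v_{x}}=0$. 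Coupling with \eqref{CA4} tested against $\rho\overline{v}$ (or $-\rho i\lambda^{-1}\overline{z}$) does not rescue this, since that identity yields $\alpha\int|v_{x}|^{2}-\rho\int|z|^{2}=\text{(controlled)}$, which is circular once you try to close on $v_{x}$.

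The paper's route for this case is different and is the point you are missing: it extracts $\int_{0}^{L}|v_{x}|^{2}$ from the \emph{thermal} equation \eqref{CA8}. Writing $i\lambda a v_{x}=-i\lambda w+d\Lambda^{m}_{xx}+f^{6}+af^{1}_{x}$ and testing with $-i\lambda^{-1}\overline{v_{x}}$ brings in the term $d\lambda^{-1}\int\Lambda^{m}_{x}\overline{v_{xx}}$ and the boundary trace $d\lambda^{-1}\Lambda^{m}_{x}(0)\overline{v_{x}(0)}$; these are handled via Gagliardo--Nirenberg, using $\|\Lambda^{m}_{x}\|^{2}\lesssim\|U\|\,\|F\|$ and the crude bounds $\|v_{xx}\|,\|\Lambda^{m}_{xx}\|\lesssim(|\lambda|+1)\|U\|+\|F\|$ read off from \eqref{CA4}, \eqref{CA5}, \eqref{CA8}. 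The resulting inequality is \emph{not} of the clean form $\alpha\int|v_{x}|^{2}\le\mathcal{S}\|U\|\|F\|$ you announce; it involves mixed powers $\|U\|^{1/2}\|F\|^{1/2}$, $\|U\|^{3/4}\|F\|^{1/4}$ and factors $|\lambda|^{\pm1}$, and consequently both $(\mathrm{\mathbf{C1}})$ and $(\mathrm{\mathbf{C2}})$ are closed by a contradiction argument (normalised sequence, $F_{n}\to0$, $|\lambda_{n}|\to l$ or $|\lambda_{n}|\to\infty$) rather than by a direct uniform resolvent bound.
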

Following Hung and Pr\"{u}ss in \cite{17,23}, we have to check  the following conditions :
\begin{equation*}
i\mathbb{R}\subset \rho(\mathcal{A}) \;\;\;\;\;\;\;\;\;\;\;\;\;\;\;\;\;\; \;\;\;\;\;\;\;\;\;\;\;\;\;\;\;\;\;\;\;\;\;\;\;\;\;\;\;\;\;\;(\mathrm{\mathbf{C1}})
\end{equation*}
and
\begin{equation*}
\sup_{\lambda \in \mathbb{R}}\|(i\lambda I-\mathcal{A})^{-1}\|_{\mathcal{L}(\mathcal{H})}=o(1). \;\;\;\;\;\;\;\;\;\;\;\;\;\;\;\;\;\; (\mathrm{\mathbf{C2}})
\end{equation*}
The proof of $(\mathrm{\mathbf{C2}})$ is presented below through the use of several lemmas. Let $(\lambda,U=(v,z,u^{1},u^{2},u^{3},w,\kappa)^\top)\in \mathbb{R}^{\ast}\times\mathcal{D}(\mathcal{A})$, with $|\lambda|\geq1$, such that
  \begin{equation}
  (i\lambda I- \mathcal{A})U=F:=(f^{1},f^{2},f^{3},f^{4},f^{5},f^{6},f^{7})^\top\in \mathcal{H},
  \end{equation}
  detailed as
  \begin{eqnarray}
    i\lambda v-z &=& f^{1}, \label{CAS3} \\
    i\lambda z-\frac{\alpha}{\rho}v_{xx}-\frac{\gamma}{\rho}u^{3}_{x}+\frac{a}{\rho}w_{x} &=& f^{2},\label{CAS4} \\
    i\lambda u^{1}-u^{2}+u^{3}_{x} &=& f^{3},\label{CAS5} \\
    i\lambda u^{2}+\frac{\mu}{\xi \epsilon_{3}}u^{1} &=& f^{4}, \label{CAS6}\\
    i\lambda u^{3}+\frac{\mu}{\epsilon_{3}}u^{1}_{x}-\frac{\gamma}{\epsilon_{3}}z_{x}+cu^{3} &=& f^{5}, \label{CAS7}\\
    i\lambda w-d \Lambda^{m}_{xx}+az_{x} &=& f^{6}, \label{CAS8}\\
    i\lambda k+k_{s}-w &=& f^{7}.\label{CAS9}
  \end{eqnarray}
We start with the following lemma:
\begin{lem}\label{CAS10}
For $m\in (0,1)$, assume that the condition  $(\mathrm{\mathbf{H}})$ holds. The solution $U\in \mathcal{D}(\mathcal{A})$ satisfy the following estimates
\begin{eqnarray*}
\int_{0}^{L}|w_{x}|^{2}dx&\leq&\mathcal{M}_{1}\|U\|_{\mathcal{H}}\|F\|_{\mathcal{H}},\\
\int_{0}^{L}|w|^{2}dx&\leq&\mathcal{M}_{2}\|U\|_{\mathcal{H}}\|F\|_{\mathcal{H}},\\
\int_{0}^{L}|u^{3}|^{2}dx&\leq&\mathcal{M}_{3}\|U\|_{\mathcal{H}}\|F\|_{\mathcal{H}},
\end{eqnarray*}
and
\begin{eqnarray*}
  \int_{0}^{+\infty} \int_{0}^{L}\sigma(s)|\kappa_{x}|^{2}dxds&\leq& \mathcal{M}_{4}\|U\|_{\mathcal{H}}\|F\|_{\mathcal{H}}, \\
  \int_{0}^{L}|\Lambda_{x}^{m}|^{2}dx &\leq& \mathcal{M}_{5}\|U\|_{\mathcal{H}}\|F\|_{\mathcal{H}},
\end{eqnarray*}
with
\begin{eqnarray*}
\mathcal{M}_{1}&=&\dfrac{1}{(1-m)d},\;\; \mathcal{M}_{2}=\dfrac{C_{P}}{(1-m)d},\;\;
 \mathcal{M}_{3}=\dfrac{1}{c\epsilon_{3}},\\
 \mathcal{M}_{4}&=&\frac{2}{mdd_{\sigma}},\;\; \mathcal{M}_{5}=\dfrac{2(1-m)}{d}+\dfrac{4g(0)}{mdd_{\sigma}}.
\end{eqnarray*}
\end{lem}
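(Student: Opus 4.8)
The plan is to mirror the proof of Lemma \ref{CA10}, the only structural difference being that with $b=0$ the dissipation no longer produces a bound on $\int_{0}^{L}|u^{2}|^{2}\,dx$, while it still controls the thermal/memory quantities and $\int_{0}^{L}|u^{3}|^{2}\,dx$. First I would take the inner product of the resolvent equation $(i\lambda I-\mathcal{A})U=F$ with $U$ in $\mathcal{H}$ and pass to the real part. Since $b=0$, the dissipativity computation recorded in Proposition \ref{PROPO1} gives
\[
\Re\langle F,U\rangle_{\mathcal{H}}=-\Re\langle\mathcal{A}U,U\rangle_{\mathcal{H}}=c\epsilon_{3}\int_{0}^{L}|u^{3}|^{2}dx+(1-m)d\int_{0}^{L}|w_{x}|^{2}dx-\frac{md}{2}\int_{0}^{L}\int_{0}^{\infty}\sigma^{\prime}(s)|\kappa_{x}|^{2}dsdx.
\]
Applying the Dafermos condition $\sigma^{\prime}(s)\le-d_{\sigma}\sigma(s)$ from $(\mathrm{\mathbf{H}})$ to the last term and Cauchy--Schwarz on the left-hand side, I obtain
\[
c\epsilon_{3}\int_{0}^{L}|u^{3}|^{2}dx+(1-m)d\int_{0}^{L}|w_{x}|^{2}dx+\frac{mdd_{\sigma}}{2}\int_{0}^{L}\int_{0}^{\infty}\sigma(s)|\kappa_{x}|^{2}dsdx\le\|F\|_{\mathcal{H}}\|U\|_{\mathcal{H}}.
\]
Since $m\in(0,1)$, each of the three nonnegative summands on the left is individually dominated by $\|F\|_{\mathcal{H}}\|U\|_{\mathcal{H}}$; dividing by the corresponding coefficient yields the claimed bounds with $\mathcal{M}_{1}=\frac{1}{(1-m)d}$, $\mathcal{M}_{3}=\frac{1}{c\epsilon_{3}}$ and $\mathcal{M}_{4}=\frac{2}{mdd_{\sigma}}$.

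For the estimate on $\int_{0}^{L}|w|^{2}\,dx$, I would invoke Poincaré's inequality $\|w\|_{L^{2}}^{2}\le C_{P}\|w_{x}\|_{L^{2}}^{2}$, legitimate because $w\in H^{1}_{0}(0,L)$, which gives $\mathcal{M}_{2}=C_{P}\mathcal{M}_{1}$. For $\int_{0}^{L}|\Lambda_{x}^{m}|^{2}\,dx$, recalling $\Lambda^{m}=(1-m)w+m\int_{0}^{\infty}\sigma(s)\kappa(s)\,ds$, I would argue exactly as in Lemma \ref{CA10}: the elementary inequality $2ab\le a^{2}+b^{2}$ together with $\left(\int_{0}^{\infty}\sigma(s)|\kappa_{x}(s)|\,ds\right)^{2}\le g(0)\int_{0}^{\infty}\sigma(s)|\kappa_{x}(s)|^{2}\,ds$ (where $g(0)=\int_{0}^{\infty}\sigma$) yields the pointwise bound $|\Lambda_{x}^{m}|^{2}\le 2(1-m)^{2}|w_{x}|^{2}+2g(0)\int_{0}^{\infty}\sigma(s)|\kappa_{x}(s)|^{2}\,ds$; integrating over $(0,L)$ and substituting the two bounds just obtained gives $\mathcal{M}_{5}=\frac{2(1-m)}{d}+\frac{4g(0)}{mdd_{\sigma}}$.

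There is essentially no delicate point in this lemma — it is the $b=0$ specialization of Lemma \ref{CA10} and the proof is the same bookkeeping. The one thing worth flagging is that, in contrast with the case $(b,c)\ne(0,0)$, the dissipation here does \emph{not} directly furnish a bound on $\int_{0}^{L}|u^{2}|^{2}\,dx$; recovering control of $u^{2}$ (and subsequently of $v_{x}$, $z$ and $u^{1}$) is the real work of the lemmas that follow, and it will rely on the compatibility condition $\xi u^{2}_{x}-u^{3}+\frac{\gamma}{\epsilon_{3}}v_{x}=0$ combined with equations (\ref{CAS5})--(\ref{CAS7}).
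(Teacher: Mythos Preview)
Your proof is correct and follows exactly the approach the paper intends: the paper's own proof of this lemma is simply ``proceed similarly as the proof of Lemma~\ref{CA10}'', and you have carried out precisely that specialization to $b=0$, including the same Poincar\'e and Cauchy--Schwarz steps for $\mathcal{M}_{2}$ and $\mathcal{M}_{5}$.
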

\begin{proof}
To get the estimations of this lemma, we proceed similarly as the proof of  lemma \ref{CA10}.
\end{proof}
\begin{lem}\label{LEMMA1}
For $m\in (0,1)$, suppose that the hypothesis ($\mathrm{\mathbf{H}}$) holds, we have the following estimation
\begin{eqnarray*}
 \alpha \int_{0}^{L}|v_{x}|^{2}dx &\lesssim&\mathcal{M}_{12} \left(|\lambda^{-1}|+1\right)\|U\|_{\mathcal{H}}\|F\|_{\mathcal{H}}\\
 &&+|\lambda^{-1}| \|U\|_{\mathcal{H}}^{\frac{1}{2}}\|F\|_{\mathcal{H}}^{\frac{1}{2}}
 \left(\left(|\lambda|+1\right)\|U\|_{\mathcal{H}}+\|F\|_{\mathcal{H}}\right)\\
&&+|\lambda^{-1}|\|F\|^{\frac{1}{4}}_{\mathcal{H}} \left( \mathbb{B}(U,F)^{2}\|U\|^{\frac{3}{4}}_{\mathcal{H}}
   +\mathbb{B}(U,F) \|U\|^{\frac{5}{4}}_{\mathcal{H}}\right),
 \end{eqnarray*}
where $\mathbb{B}(U,F)=\left( (|\lambda|^{\frac{1}{2}}+1)\|U\|^{\frac{1}{2}}_{\mathcal{H}}+ \|F\|^{\frac{1}{2}}_{\mathcal{H}}\right).$
\end{lem}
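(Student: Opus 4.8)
The plan is to read off $\int_0^L|v_x|^2\,dx$ from the compatibility condition and then control the resulting cross terms by the algebraic resolvent equations (\ref{CAS3})--(\ref{CAS9}) together with the a priori bounds of Lemma \ref{CAS10}. First I would multiply the compatibility identity $\xi u^2_x-u^3+\frac{\gamma}{\epsilon_3}v_x=0$ by $\overline{v_x}$, integrate over $(0,L)$ and take real parts, which gives
\[
\frac{\gamma}{\epsilon_3}\int_0^L|v_x|^2\,dx=\Re\int_0^L u^3\,\overline{v_x}\,dx-\xi\,\Re\int_0^L u^2_x\,\overline{v_x}\,dx .
\]
The term $\int u^3\,\overline{v_x}$ is harmless: Young's inequality and the bound on $\int|u^3|^2$ in Lemma \ref{CAS10} control it by $C\|U\|_{\mathcal H}\|F\|_{\mathcal H}$ plus a small multiple of $\int|v_x|^2$ that will be absorbed on the left at the end.

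For the main term $\int u^2_x\,\overline{v_x}$, since $u^2\in H^1_0(0,L)$ an integration by parts removes the boundary contribution and yields $\int u^2_x\,\overline{v_x}=-\int u^2\,\overline{v_{xx}}$; substituting $v_{xx}$ from the beam equation (\ref{CAS4}) then produces the terms $\lambda\int u^2\,\overline z$, $\int u^2\,\overline{u^3_x}$, $\int u^2\,\overline{w_x}$ and $\int u^2\,\overline{f^2}$. The last three are handled by Lemma \ref{CAS10} together with $\|u^2\|\lesssim|\lambda|^{-1}(\|U\|_{\mathcal H}+\|F\|_{\mathcal H})$ read off from (\ref{CAS6}) (for $\int u^2\,\overline{u^3_x}$ one integrates by parts again, using $u^2\in H^1_0$, and replaces $u^2_x$ by $u^3-\frac{\gamma}{\epsilon_3}v_x$). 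For the term $\lambda\int u^2\,\overline z$ I would use (\ref{CAS6}) in the form $i\lambda u^2=f^4-\frac{\mu}{\xi\epsilon_3}u^1$, which reduces it to $\int f^4\,\overline z\ \bigl(\lesssim\|U\|_{\mathcal H}\|F\|_{\mathcal H}\bigr)$ and the genuinely critical quantity $\int_0^L u^1\,\overline z\,dx$.

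Estimating $\int u^1\,\overline z$ is the main obstacle, and the reason the bound is more involved than in the case $b\ne0$ of Lemma \ref{CA11}: with $b=0$ the $x$-direction electrical field component carries no direct damping, so no $\mathcal O(\|U\|_{\mathcal H}\|F\|_{\mathcal H})$ estimate for $u^1$ is available. Here I would use the resolvent equations (\ref{CAS5}), (\ref{CAS6}) together with Lemma \ref{CAS10} to reduce $\int u^1\,\overline z$, up to source terms, to $\int u^3_x\,\overline z$; an integration by parts turns the latter into $-\int u^3\,\overline{z_x}$ plus a boundary term $u^3(L)\,\overline{z(L)}$. The derivative norms appearing here are controlled with one power of $\lambda$ to spare, namely $\|u^1_x\|\lesssim|\lambda|\,\|U\|_{\mathcal H}+\|F\|_{\mathcal H}$ from (\ref{CAS7}) (using $z_x=i\lambda v_x-f^1_x$) and $\|u^3_x\|\lesssim|\lambda|\,\|U\|_{\mathcal H}+\|F\|_{\mathcal H}$ from (\ref{CAS5}), while the boundary term is bounded via the one-dimensional Gagliardo--Nirenberg inequality $\|f\|_{L^\infty}\lesssim\|f\|_{L^2}^{1/2}\|f_x\|_{L^2}^{1/2}$ --- this is precisely where $\mathbb{B}(U,F)=(|\lambda|^{1/2}+1)\|U\|_{\mathcal H}^{1/2}+\|F\|_{\mathcal H}^{1/2}$ enters, since $\mathbb{B}^2\lesssim(|\lambda|+1)\|U\|_{\mathcal H}+\|F\|_{\mathcal H}$ is exactly the size of those derivative norms. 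Because Lemma \ref{CAS10} supplies $\|u^3\|,\|w_x\|\lesssim(\|U\|_{\mathcal H}\|F\|_{\mathcal H})^{1/2}$, every term retains a strictly positive power of $\|F\|_{\mathcal H}$ and no bare $\|U\|_{\mathcal H}^2$ survives, and the factors of $|\lambda|$ always arrive paired with the $|\lambda^{-1}|$ produced when solving for $u^1$, so the bound is $\lambda$-uniform after simplification. Inserting all of this into the displayed identity, using Young's inequality to absorb the accumulated small multiples of $\int|v_x|^2$ into the left-hand side and multiplying by $\frac{\alpha\epsilon_3}{\gamma}$ yields the asserted inequality. The only delicate point is the bookkeeping of the $\lambda$-weights in the estimate of $\int u^1\,\overline z$; everything else reduces to Cauchy--Schwarz, Young's inequality and Lemma \ref{CAS10}.
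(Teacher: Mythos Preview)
Your route is genuinely different from the paper's, and it has a gap. The paper does \emph{not} start from the compatibility condition; it extracts $\int|v_x|^2$ from the \emph{thermal} equation \eqref{CAS8}. Writing $z_x=i\lambda v_x-f^1_x$ in \eqref{CAS8} gives $ia\lambda v_x=-i\lambda w+d\Lambda^m_{xx}+f^6+af^1_x$, and after multiplying by $-i\lambda^{-1}\overline{v_x}$ and integrating by parts one is left with $\int w\,\overline{v_x}$, $\lambda^{-1}\int\Lambda^m_x\,\overline{v_{xx}}$, source terms, and the boundary contribution $\lambda^{-1}\Lambda^m_x(0)\,v_x(0)$; the Gagliardo--Nirenberg step and $\mathbb{B}(U,F)$ therefore arise from $|\Lambda^m_x(0)|$ and $|v_x(0)|$, not from $|u^3(L)|$ and $|z(L)|$. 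The point is that every term produced this way carries one of the damped quantities $w$, $w_x$, or $\Lambda^m_x$, each already bounded by $(\|U\|_{\mathcal H}\|F\|_{\mathcal H})^{1/2}$ thanks to Lemma~\ref{CAS10}, so the $\|F\|_{\mathcal H}$ factor is built in from the start.

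In your approach the reduction of $\int u^1\,\overline z$ via \eqref{CAS5} leaves the residue $\lambda^{-1}\int u^2\,\overline z$. Since $b=0$, Lemma~\ref{CAS10} gives no bound on $\|u^2\|$ with an $\|F\|_{\mathcal H}$ factor; the best you have is $\|u^2\|\lesssim|\lambda|^{-1}(\|U\|_{\mathcal H}+\|F\|_{\mathcal H})$ from \eqref{CAS6}, yielding a term of size $|\lambda|^{-2}\|U\|_{\mathcal H}^2$ with \emph{no} power of $\|F\|_{\mathcal H}$ at all. This contradicts your claim that ``no bare $\|U\|_{\mathcal H}^2$ survives'', it does not fit into the three terms of the stated inequality, and it would spoil the application to condition $(\mathrm{\mathbf{C1}})$ (where $\lambda_n$ stays bounded, so $|\lambda_n|^{-2}\|U_n\|_{\mathcal H}^2$ does not tend to~$0$). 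Eliminating $u^2$ by combining \eqref{CAS5}--\eqref{CAS6} introduces the resonant denominator $\lambda-\mu/(\lambda\xi\epsilon_3)$, which does not help uniformly. The paper's choice to go through the thermal equation is precisely what circumvents this; the compatibility condition is the natural starting point only when both electrical dampings are present (compare Lemma~\ref{CA11}).
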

\begin{proof}
From (\ref{CAS8}) and (\ref{CAS3}), we have
\begin{equation}\label{CAS11}
i\lambda a v_{x}=-i\lambda w+d\Lambda_{xx}^{m}+f^{6}+af^{1}_{x}.
\end{equation}
Multiplying (\ref{CAS11}) by $-i\lambda^{-1}\bar{v_{x}}$, integrating over $(0,L)$, we get
\begin{eqnarray*}
a\int_{0}^{L}|v_{x}|^{2}dx&=&-\int_{0}^{L}wv_{x}dx-id\lambda^{-1}\int_{0}^{L}\Lambda_{xx}^{m}v_{x}dx\\
&&-i\lambda^{-1}\int_{0}^{L}f^{6}v_{x}dx-i\lambda^{-1}a\int_{0}^{L}f^{1}_{x}v_{x}dx,
\end{eqnarray*}
then,
\begin{eqnarray}
a\int_{0}^{L}|v_{x}|^{2}dx&=&-\int_{0}^{L}wv_{x}dx+id\lambda^{-1}\int_{0}^{L}\Lambda^{m}_{x}v_{xx}dx+i\lambda^{-1} d \Lambda_{x}(0)v_{x}(0)\notag\\
&&-i\lambda^{-1}\int_{0}^{L}f^{6}v_{x}dx
-i\lambda^{-1}a\int_{0}^{L}f^{1}_{x}v_{x}d x\notag\\
&\leq&\int_{0}^{L}|w||v_{x}|dx+d|\lambda^{-1}|\int_{0}^{L}|\Lambda^{m}_{x}||v_{xx}|dx+d|\lambda^{-1}||\Lambda^{m}_{x}(0)||v_{x}(0)| \notag\\
&&+|\lambda^{-1}| \int_{0}^{L}|f^{6}||v_{x}|dx
+a|\lambda^{-1}|\int_{0}^{L}|f_{x}^{1}||v_{x}|dx.\label{CAS20}
\end{eqnarray}
We make use of  Young's inequality and lemma \ref{CAS10} to write
\begin{eqnarray}
   \int_{0}^{L}|w||v_{x}|dx&\leq& \dfrac{\delta_{1}}{2}\int_{0}^{L}|w|^{2}dx+\dfrac{1}{2\delta_{1}}\int_{0}^{L}|v_{x}|^{2}dx\notag \\
  &\leq& \delta_{1}\dfrac{\mathcal{M}_{2}}{2}\|U\|_{\mathcal{H}}\|F\|_{\mathcal{H}}
  +\dfrac{1}{2\delta_{1}}\int_{0}^{L}|v_{x}|^{2}dx. \label{CAS18}
\end{eqnarray}
Using the inequalities $\sqrt{\alpha}\|f_{x}^{1}\|\leq \|F\|_{\mathcal{H}}$, $\|f^{6}\|\leq \|F\|_{\mathcal{H}}$
 and $\sqrt{\alpha} \|v_{x}\|\leq \|U\|_{\mathcal{H}}$ to give these following
\begin{equation}
 |\lambda^{-1}| \int_{0}^{L}|f^{6}||v_{x}|dx
+a|\lambda^{-1}|\int_{0}^{L}|f_{x}^{1}||v_{x}|dx\leq|\lambda^{-1}| \left(\dfrac{1}{\sqrt{\alpha}}
+\dfrac{a}{\alpha}\right)\|U\|_{\mathcal{H}}\|F\|_{\mathcal{H}}.\label{CAS19}
\end{equation}
From (\ref{CAS4}), we have
\begin{equation}\label{CAS12}
v_{xx}=\dfrac{\rho}{\alpha}\left[ i\lambda z-\dfrac{\gamma}{\rho}u^{3}_{x}+\dfrac{a}{\rho}w_{x}-f^{2}\right].
\end{equation}
Inserting (\ref{CAS5}) in (\ref{CAS12}), we obtain
\begin{equation}\label{CAS13}
  v_{xx}=\dfrac{i\lambda \rho}{\alpha}z+\dfrac{i\lambda \gamma}{\alpha}u^{1}-\dfrac{\gamma}{\alpha}u^{2}-\dfrac{\gamma}{\alpha}f^{3}+\dfrac{a}{\alpha}w_{x}-\dfrac{\rho}{\alpha}f^{2}.
\end{equation}
Using the inequalities $\sqrt{\rho}\|f^{2}\|^{2}\leq \|F\|_{\mathcal{H}}$, $\sqrt{\epsilon_{3}}\|f^{5}\|\leq \|F\|_{\mathcal{H}}$, $\sqrt{\xi\epsilon_{3}}\|u^{2}\|\leq \|U\|_{\mathcal{H}}$, $\sqrt{\mu}\|u^{1}\|\leq \|U\|_{\mathcal{H}}$ and Young's inequality $ab\leq \dfrac{a^{2}}{2}+\dfrac{b^{2}}{2}$, we estimate
\begin{eqnarray*}
\|v_{xx}\|&\leq& |\lambda|\dfrac{\sqrt{\rho}}{\alpha}\|U\|_{\mathcal{H}}
+|\lambda| \dfrac{\gamma}{\alpha \sqrt{\mu}}\|U\|_{\mathcal{H}}+ \dfrac{\gamma}{\alpha \sqrt{\epsilon_{3}\xi}}\|U\|_{\mathcal{H}}\\
&&+\dfrac{\gamma}{\alpha \sqrt{\mu}}\|F\|_{\mathcal{H}}+\dfrac{\sqrt{\rho}}{\alpha}\|F\|_{\mathcal{H}}
+\dfrac{a}{\alpha}\sqrt{\mathcal{M}_{1}}\|U\|_{\mathcal{H}}^{\frac{1}{2}}\|F\|_{\mathcal{H}}^{\frac{1}{2}}\\
&\leq&\mathcal{M}_{7} \left((|\lambda|+1)\|U\|_{\mathcal{H}}+\|U\|_{\mathcal{H}}^{\frac{1}{2}}\|F\|_{\mathcal{H}}^{\frac{1}{2}}+\|F\|_{\mathcal{H}}\right)\\
&\leq& \mathcal{M}_{8} \left((|\lambda|+1)\|U\|_{\mathcal{H}}+\|F\|_{\mathcal{H}}\right),
\end{eqnarray*}
where $\mathcal{M}_{7}=\max \left( \dfrac{\sqrt{\rho}}{\alpha}+\dfrac{\gamma}{\alpha \sqrt{\mu}},\dfrac{\gamma}{\alpha \sqrt{\xi \epsilon_{3}}}, \dfrac{a}{\alpha }\sqrt{\mathcal{M}_{1}}\right)$ and $\mathcal{M}_{8}=\dfrac{3}{2}\mathcal{M}_{7}$.
Using lemma \ref{CAS10}, it follows
\begin{equation}\label{CAS21}
  d|\lambda^{-1}|\int_{0}^{L}|\Lambda^{m}_{x}||v_{xx}|dx\leq d \mathcal{M}_{8}  \sqrt{\mathcal{M}_{5}}|\lambda^{-1}|\|U\|_{\mathcal{H}}^{\frac{1}{2}}\|F\|_{\mathcal{H}}^{\frac{1}{2}}\left( (|\lambda|+1)\|U\|_{\mathcal{H}}+\|F\|_{\mathcal{H}}\right).
\end{equation}
From (\ref{CAS11}), we give the following inequality
\begin{eqnarray}\label{CAS14}
  \|\Lambda_{xx}\| &\leq& \mathcal{M}_{9}\left( |\lambda|\|U\|_{\mathcal{H}}+\|F\|_{\mathcal{H}}\right),
\end{eqnarray}
where $\mathcal{M}_{9}=\dfrac{1}{d}\left(\dfrac{a}{\sqrt{\alpha}}+1\right)$.\\
We use Gagliardo-Nirenberg inequality, the inequality (\ref{CAS14}) and lemma \ref{CAS10},  we obtain
\begin{eqnarray*}
  |\Lambda_{x}(0)| &\leq& \mathcal{C}_{2}\|\Lambda_{xx}\|^{\frac{1}{2}}\|\Lambda_{x}\|^{\frac{1}{2}}+\mathcal{C}_{2}\|\Lambda_{x}\|\\
  &\leq& \max(C_{1},C_{2})\|\Lambda_{x}\|^{\frac{1}{2}}\left(\|\Lambda_{xx}\|^{\frac{1}{2}}+\|\Lambda_{x}\|^{\frac{1}{2}}\right)\\
  &\leq&  \max(C_{1},C_{2}) \mathcal{M}_{5}^{\frac{1}{4}}\|U\|^{\frac{1}{4}}_{\mathcal{H}}\|F\|^{\frac{1}{4}}_{\mathcal{H}}\left( \mathcal{M}_{9}^{\frac{1}{2}}\left( |\lambda|\|U\|_{\mathcal{H}}+\|F\|_{\mathcal{H}}\right)^{\frac{1}{2}}
  +\mathcal{M}_{5}^{\frac{1}{4}}\|U\|^{\frac{1}{4}}_{\mathcal{H}}\|F\|^{\frac{1}{4}}_{\mathcal{H}}\right),
\end{eqnarray*}
then using the fact that we have $\sqrt{a+b}\leq \sqrt{a}+\sqrt{b}$,  and Young inequality $ab\leq a^{2}+\dfrac{b^{2}}{4}$ we claim the following
\begin{eqnarray}\label{CAS15}
|\Lambda_{x}(0)|&\leq&\mathcal{M}_{10}\|U\|^{\frac{1}{4}}_{\mathcal{H}}\|F\|^{\frac{1}{4}}_{\mathcal{H}}\left( (|\lambda|^{\frac{1}{2}}+1)\|U\|^{\frac{1}{2}}_{\mathcal{H}}+ \|F\|^{\frac{1}{2}}_{\mathcal{H}}\right),
\end{eqnarray}
where $\mathcal{M}_{10}=\dfrac{5}{4}\max(C_{1},C_{2}) \mathcal{M}_{5}^{\frac{1}{4}}\max(\mathcal{M}_{9}^{\frac{1}{2}},\mathcal{M}_{5}^{\frac{1}{4}}).$\\
We use again Gagliardo-Nirenberg inequality, to estimate $|v_{x}(0)|$, we have
\begin{eqnarray}
|v_{x}(0)|&\leq& C_{1}\|v_{xx}\|^{\frac{1}{2}}\|v_{x}\|^{\frac{1}{2}}+C_{2}\|v_{x}\| \notag\\
&\leq&\dfrac{C_{1}}{\sqrt{\alpha}}\mathcal{M}_{8}^{\frac{1}{2}}\|U\|^{\frac{1}{2}}_{\mathcal{H}}
\left((|\lambda|^{\frac{1}{2}}+1)\|U\|_{\mathcal{H}}+\|F\|_{\mathcal{H}}\right)^{\frac{1}{2}}+\dfrac{C_{2}}{\sqrt{\alpha}}\|U\|_{\mathcal{H}}\notag\\
&\leq&\mathcal{M}_{11}
\left(\left((|\lambda|^{\frac{1}{2}}+1)\|U\|_{\mathcal{H}}^{\frac{1}{2}}
+\|F\|_{\mathcal{H}}^{\frac{1}{2}}\right)\|U\|^{\frac{1}{2}}_{\mathcal{H}}+\|U\|_{\mathcal{H}}\right),
 \label{CAS16}
\end{eqnarray}
  with $\mathcal{M}_{11}=\max(\dfrac{C_{1}}{\sqrt{\sqrt{\alpha}}}\mathcal{M}_{8}^{\frac{1}{2}},\dfrac{C_{2}}{\sqrt{\alpha}})$.\\
 Making use of (\ref{CAS15}) and (\ref{CAS16}), we estimate
 \begin{equation}\label{CAS17}
   d|\lambda^{-1}||\Lambda^{m}_{x}(0)||v_{x}(0)|\leq d|\lambda^{-1}|\mathcal{M}_{10}\mathcal{M}_{11}\|F\|^{\frac{1}{4}}_{\mathcal{H}} \left( \mathcal{B}(U,F)^{2}\|U\|^{\frac{3}{4}}_{\mathcal{H}}
   +\mathcal{B}(U,F) \|U\|^{\frac{5}{4}}_{\mathcal{H}}\right),
 \end{equation}
 where $\mathcal{B}(U,F)=\left( (|\lambda|^{\frac{1}{2}}+1)\|U\|^{\frac{1}{2}}_{\mathcal{H}}+ \|F\|^{\frac{1}{2}}_{\mathcal{H}}\right)$.\\
 Inserting (\ref{CAS18}), (\ref{CAS19}), (\ref{CAS21}) and (\ref{CAS17})  in (\ref{CAS20}), then choosing $\delta_{1}=\dfrac{1}{2(a-\alpha)}$ we obtain
 \begin{eqnarray*}
 \alpha \int_{0}^{L}|v_{x}|^{2}dx &\leq& \dfrac{1}{2(a-\alpha)}\dfrac{\mathcal{M}_{2}}{2}\|U\|_{\mathcal{H}}\|F\|_{\mathcal{H}}+|\lambda^{-1}| \left(\dfrac{1}{\sqrt{\alpha}}
+\dfrac{a}{\alpha}\right)\|U\|_{\mathcal{H}}\|F\|_{\mathcal{H}}\\
&&+d |\lambda^{-1}|\mathcal{M}_{8}  \sqrt{\mathcal{M}_{5}}\|U\|_{\mathcal{H}}^{\frac{1}{2}}\|F\|_{\mathcal{H}}^{\frac{1}{2}}\left( (|\lambda|+1)\|U\|_{\mathcal{H}}+\|F\|_{\mathcal{H}}\right)\\
&&+d|\lambda^{-1}| \mathcal{M}_{10}\mathcal{M}_{11}\|F\|^{\frac{1}{4}}_{\mathcal{H}} \left( \mathcal{B}(U,F)^{2}\|U\|^{\frac{3}{4}}_{\mathcal{H}}
   +\mathcal{B}(U,F) \|U\|^{\frac{5}{4}}_{\mathcal{H}}\right),
 \end{eqnarray*}
  by taking $\mathcal{M}_{12}=\max\left(\dfrac{1}{2(a-\alpha)}\dfrac{\mathcal{M}_{2}}{2}, \left(\dfrac{1}{\sqrt{\epsilon_{3}\alpha}}
+\dfrac{a}{\alpha}\right), d \mathcal{M}_{8}  \mathcal{M}_{5} ,d \mathcal{M}_{10}\mathcal{M}_{11}\right)$, we claim our result.
\end{proof}
\begin{lem}\label{LEMMA5}
For $m \in (0,1)$, assume that the condition $(\mathrm{\mathbf{H}})$ holds. Then
\begin{equation*}
  \xi\epsilon_{3}\int_{0}^{L}|u^{2}|^{2}dx\leq \mathcal{M}_{13}\left(\int_{0}^{L}|v_{x}|^{2}dx+\|U\|_{\mathcal{H}}\|F\|_{\mathcal{H}}\right),
\end{equation*}
with $\mathcal{M}_{13}=\max\left(\dfrac{\gamma^{2}(\epsilon_{3}+1)}{2\xi \epsilon_{3}},\dfrac{\epsilon_{3}+1}{2\xi}\mathcal{M}_{3}\right).$
\end{lem}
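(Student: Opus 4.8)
The plan is to recover the undamped component $u^{2}$ directly from the compatibility condition, because in the present case $b=0$ the dissipation identity behind Lemma \ref{CAS10} carries the factor $b$ in front of $\int_{0}^{L}|u^{2}|^{2}dx$ and therefore yields no information on it. The point is that the compatibility relation $\xi u^{2}_{x}-u^{3}+\frac{\gamma}{\epsilon_{3}}v_{x}=0$ ties $u^{2}_{x}$ to the \emph{damped} component $u^{3}$ (already controlled by Lemma \ref{CAS10}, since $c\neq0$) and to $v_{x}$, which is the quantity handled by Lemma \ref{LEMMA1}; it is thus natural to leave $\int_{0}^{L}|v_{x}|^{2}dx$ as a free term on the right-hand side and to absorb it only later, when Lemma \ref{LEMMA1} and the present estimate are combined.

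Concretely, I would rewrite the compatibility condition as $u^{2}_{x}=\frac{1}{\xi}\bigl(u^{3}-\frac{\gamma}{\epsilon_{3}}v_{x}\bigr)$ and, using $u^{2}(0)=0$ (recall $u^{2}\in H^{1}_{0}(0,L)$), integrate to get $u^{2}(x)=\frac{1}{\xi}\int_{0}^{x}\bigl(u^{3}(s)-\frac{\gamma}{\epsilon_{3}}v_{x}(s)\bigr)\,ds$. Cauchy--Schwarz gives $|u^{2}(x)|^{2}\le\frac{x}{\xi^{2}}\int_{0}^{L}\bigl|u^{3}-\frac{\gamma}{\epsilon_{3}}v_{x}\bigr|^{2}ds$, and integrating in $x$ over $(0,L)$ yields $\int_{0}^{L}|u^{2}|^{2}dx\le\frac{1}{2\xi^{2}}\int_{0}^{L}\bigl|u^{3}-\frac{\gamma}{\epsilon_{3}}v_{x}\bigr|^{2}dx$. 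I would then expand the square with Young's inequality using the weight $\epsilon_{3}$, i.e. $\bigl|u^{3}-\frac{\gamma}{\epsilon_{3}}v_{x}\bigr|^{2}\le\frac{\epsilon_{3}+1}{\epsilon_{3}}|u^{3}|^{2}+\frac{(\epsilon_{3}+1)\gamma^{2}}{\epsilon_{3}^{2}}|v_{x}|^{2}$, multiply by $\xi\epsilon_{3}$, and insert $\int_{0}^{L}|u^{3}|^{2}dx\le\mathcal{M}_{3}\|U\|_{\mathcal{H}}\|F\|_{\mathcal{H}}$ from Lemma \ref{CAS10}. Collecting the coefficient $\frac{(\epsilon_{3}+1)\gamma^{2}}{2\xi\epsilon_{3}}$ of $\int_{0}^{L}|v_{x}|^{2}dx$ and the coefficient $\frac{\epsilon_{3}+1}{2\xi}\mathcal{M}_{3}$ of $\|U\|_{\mathcal{H}}\|F\|_{\mathcal{H}}$ into their maximum $\mathcal{M}_{13}$ delivers the stated inequality.

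I do not expect a genuine obstacle here: once one decides to route $u^{2}$ through the compatibility condition rather than through the energy balance, everything is elementary. The only points needing care are the precise choice of the Young weight (it must be $\epsilon_{3}$ so that the constant collapses to the form written in the statement) and the deliberate choice not to substitute Lemma \ref{LEMMA1} for $\int_{0}^{L}|v_{x}|^{2}dx$ at this stage, keeping the estimate in the clean form required for the later absorption step.
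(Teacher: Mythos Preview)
Your approach is correct and follows the same route as the paper: both exploit the compatibility relation $\xi u^{2}_{x}=u^{3}-\frac{\gamma}{\epsilon_{3}}v_{x}$ to reduce $u^{2}$ to the already-controlled $u^{3}$ and the deferred term $v_{x}$. The only differences are cosmetic: the paper multiplies the relation by $\epsilon_{3}\bar u^{2}_{x}$, integrates, and uses Young's inequality with parameter $\delta_{2}=\frac{\epsilon_{3}+1}{\xi\epsilon_{3}}$ to absorb the $|u^{2}_{x}|^{2}$ term (its final displayed inequality is actually for $\|u^{2}_{x}\|^{2}$ rather than $\|u^{2}\|^{2}$), whereas you integrate in $x$ and apply Cauchy--Schwarz; note also that your step $\int_{0}^{L}|u^{2}|^{2}dx\le\frac{1}{2\xi^{2}}\int_{0}^{L}\bigl|u^{3}-\frac{\gamma}{\epsilon_{3}}v_{x}\bigr|^{2}dx$ tacitly sets $L=1$, since $\int_{0}^{L}x\,dx=L^{2}/2$.
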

\begin{proof}
 Multiplying the compatibility condition $\xi u_{x}^{2}-u^{3}+\frac{\gamma}{\epsilon_{3}}v_{x}=0$ by $\epsilon_{3}u^{2}_{x}$, then we integrate over $(0,L)$, we obtain
\begin{equation*}
  \xi \epsilon_{3}\int_{0}^{L}|u^{2}_{x}|^{2}dx=\epsilon_{3} \int_{0}^{L}u^{2}_{x}u^{3}dx-\gamma \int_{0}^{L}u^{2}_{x}v_{x}dx,
\end{equation*}
then it follows
\begin{eqnarray*}
  \xi \epsilon_{3}\int_{0}^{L}|u^{2}_{x}|^{2}dx &\leq& \epsilon_{3}\int_{0}^{L}|u^{2}_{x}||u^{3}|dx+\gamma\int_{0}^{L}|u^{2}_{x}||v_{x}|dx\\
  &\leq&\dfrac{\epsilon_{3}+1}{ 2\delta_{2}} \int_{0}^{L}|u_{x}^{2}|^{2}dx+\dfrac{\gamma^{2}\delta_{2}}{2}\int_{0}^{L}|v_{x}|^{2}dx+
  \dfrac{\epsilon_{3}\delta_{2}}{2}\int_{0}^{L}\int_{0}^{L}|u^{3}|^{2}dx.
\end{eqnarray*}
Taking $\delta_{2}=\dfrac{\epsilon_{3}+1}{\xi\epsilon_{3}}$ then using lemma \ref{CAS10} and lemma \ref{LEMMA1}, we get
\begin{equation*}
   \xi \epsilon_{3}\int_{0}^{L}|u^{2}_{x}|^{2}dx\leq \dfrac{\gamma^{2}(\epsilon_{3}+1)}{2\xi \epsilon_{3}} \int_{0}^{L}|v_{x}|^{2}dx
   +\dfrac{\epsilon_{3}+1}{2\xi}\mathcal{M}_{3}\|U\|_{\mathcal{H}}\|F\|_{\mathcal{H}}.
\end{equation*}
Thus the proof is completed.
\end{proof}
\begin{lem}\label{LEMMA6}
For $m \in (0,1)$, assume that the hypothesis $(\mathrm{\mathbf{H}})$ holds. Then
\begin{equation*}
  \rho\int_{0}^{L}|z|^{2}dx\leq \alpha\int_{0}^{L}|v_{x}|^{2}dx+\mathcal{M}_{13}\|U\|_{\mathcal{H}}\|F\|_{\mathcal{H}},
\end{equation*}
with $\mathcal{M}_{13}=2\left(1+\dfrac{\gamma^{2}}{2\alpha} \mathcal{S}_{4}
  +\dfrac{\gamma}{\sqrt{\alpha}\sqrt{\epsilon_{3}}}+\dfrac{a^{2}}{2\rho}\mathcal{S}_{1}+\dfrac{1}{\sqrt{\rho}}\right).$
\end{lem}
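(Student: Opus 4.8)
The plan is to reproduce, for the system (\ref{CAS3})--(\ref{CAS9}) with $b=0$, the energy identity used for Lemma \ref{LEMMA3}. First I would multiply the momentum equation (\ref{CAS4}) by $-\rho\, i\lambda^{-1}\bar z$ and integrate over $(0,L)$: the term stemming from $i\lambda z$ is exactly $\rho\int_{0}^{L}|z|^{2}dx$, and integrating by parts once in the terms containing $v_{xx}$ and $u^{3}_{x}$ the resulting boundary values combine into $-i\lambda^{-1}\bar z(L)\bigl(\alpha v_{x}(L)+\gamma u^{3}(L)\bigr)$, which vanishes because the domain $\mathcal{D}(\mathcal{A})$ imposes $\alpha v_{x}(L)+\gamma u^{3}(L)=0$ together with $z(0)=0$. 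This produces
\[
\rho\int_{0}^{L}|z|^{2}dx=i\lambda^{-1}\alpha\int_{0}^{L}v_{x}\overline{z_{x}}\,dx+i\lambda^{-1}\gamma\int_{0}^{L}u^{3}\overline{z_{x}}\,dx+i\lambda^{-1}a\int_{0}^{L}w_{x}\bar z\,dx-i\lambda^{-1}\rho\int_{0}^{L}f^{2}\bar z\,dx.
\]

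Next I would insert $z_{x}=i\lambda v_{x}-f^{1}_{x}$, obtained by differentiating (\ref{CAS3}), into the first two integrals. The leading contribution of the first integral becomes $\alpha\int_{0}^{L}|v_{x}|^{2}dx$ (the term the lemma keeps on the right-hand side), and the second integral becomes $\gamma\int_{0}^{L}u^{3}\overline{v_{x}}\,dx$ plus an $O(|\lambda^{-1}|)$ remainder. Every remaining term is lower order and is treated routinely: by Cauchy--Schwarz together with the embeddings $\sqrt{\alpha}\|v_{x}\|\le\|U\|_{\mathcal{H}}$, $\sqrt{\rho}\|z\|\le\|U\|_{\mathcal{H}}$, $\sqrt{\epsilon_{3}}\|u^{3}\|\le\|U\|_{\mathcal{H}}$, $\sqrt{\alpha}\|f^{1}_{x}\|\le\|F\|_{\mathcal{H}}$, $\sqrt{\rho}\|f^{2}\|\le\|F\|_{\mathcal{H}}$, by $|\lambda^{-1}|\le1$, and by the bounds $\int_{0}^{L}|w_{x}|^{2}dx\le\mathcal{S}_{1}\|U\|_{\mathcal{H}}\|F\|_{\mathcal{H}}$ and $\int_{0}^{L}|u^{3}|^{2}dx\le\mathcal{S}_{4}\|U\|_{\mathcal{H}}\|F\|_{\mathcal{H}}$ from Lemma \ref{CAS10} (with $\mathcal{S}_{1}=\mathcal{M}_{1}$, $\mathcal{S}_{4}=\mathcal{M}_{3}$). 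Young's inequality is applied to $\gamma\int_{0}^{L}|u^{3}||v_{x}|\,dx$ so as to extract the factor $\tfrac{\gamma^{2}}{2\alpha}\mathcal{S}_{4}$, and to $a|\lambda^{-1}|\int_{0}^{L}|w_{x}||z|\,dx$ with weight $a/\rho$, so that its $z$-part reproduces $\tfrac{\rho}{2}\int_{0}^{L}|z|^{2}dx$.

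Finally I would move the $\tfrac{\rho}{2}\int_{0}^{L}|z|^{2}dx$ term to the left, leaving $\tfrac{\rho}{2}\int_{0}^{L}|z|^{2}dx$ controlled by $\int_{0}^{L}|v_{x}|^{2}dx$ plus $\bigl(1+\tfrac{\gamma^{2}}{2\alpha}\mathcal{S}_{4}+\tfrac{\gamma}{\sqrt{\alpha}\sqrt{\epsilon_{3}}}+\tfrac{a^{2}}{2\rho}\mathcal{S}_{1}+\tfrac{1}{\sqrt{\rho}}\bigr)\|U\|_{\mathcal{H}}\|F\|_{\mathcal{H}}$; multiplying through by $2$ yields the constant $\mathcal{M}_{13}$ in the statement. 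I do not anticipate a genuine obstacle: this is a pure energy estimate, and the only delicate points are the vanishing of the boundary terms (which is precisely the role of the boundary and compatibility conditions defining $\mathcal{D}(\mathcal{A})$) and the bookkeeping of the Young weights so that the $\int_{0}^{L}|z|^{2}$ contribution can be absorbed while the advertised constant is recovered.
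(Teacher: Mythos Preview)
Your proposal is correct and follows essentially the same route as the paper: the paper also multiplies (\ref{CAS4}) by $-\rho i\lambda^{-1}\bar z$, integrates by parts, substitutes $z_{x}=i\lambda v_{x}-f^{1}_{x}$ from (\ref{CAS3}), and then treats the resulting terms exactly as you describe, the only change from Lemma~\ref{LEMMA3} being the Young weight $\delta_{3}=\gamma/\alpha$ on $\gamma\int_{0}^{L}|u^{3}||v_{x}|\,dx$ so that the $|v_{x}|^{2}$ contribution is kept rather than absorbed. Your explicit check of the boundary contribution $\bar z(L)(\alpha v_{x}(L)+\gamma u^{3}(L))=0$ is in fact more detailed than what the paper writes out.
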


\begin{proof}
The proof of the lemma at hand bears resemblance to the proof of lemma \ref{LEMMA3}, the only different is on the estimation of a $\alpha \|v_{x}\|^{2}$. Hence, i shall present only the distinguishing parts between the two cases, for the sake of clarity, we start by giving the following estimation as in lemma \ref{LEMMA3}
\begin{eqnarray*}
  \rho\int_{0}^{L}|z|^{2}dx&\leq& \alpha \int_{0}^{L}|v_{x}|^{2}dx+\alpha|\lambda^{-1}| \int_{0}^{1}|v_{x}||f_{x}^{1}|dx+\gamma \int_{0}^{L}|u^{3}||v_{x}|dx\\ \notag
  &&+\gamma |\lambda^{-1}|\int_{0}^{L}|u^{3}||f_{x}^{1}|dx
  +a|\lambda^{-1}|\int_{0}^{L}|w_{x}||z|dx+\rho|\lambda^{-1}|\int_{0}^{L}|f^{2}||w|dx.
\end{eqnarray*}
Estimating only the third term in the right hand side, we obtain
\begin{eqnarray*}
\gamma \int_{0}^{L}|u^{3}||v_{x}|dx &\leq&  \dfrac{\gamma\delta_{3}}{2}\int_{0}^{L}|u^{3}|^{2}dx+\dfrac{\gamma}{2\delta_{3}}\int_{0}^{L}|v_{x}|^{2}dx.
\end{eqnarray*}
Choosing $\delta_{3}=\dfrac{\gamma}{\alpha}$, we get by using the same argument as in the proof of (\ref{1}) and (\ref{CAS25}) in lemma \ref{LEMMA3}, to claim that we have 
\begin{eqnarray*}
  \rho\int_{0}^{L}|z|^{2}dx &\leq&\alpha \int_{0}^{L}|v_{x}|^{2}dx
  + 2\left(1+\dfrac{\gamma^{2}}{2\alpha} \mathcal{S}_{4}
  +\dfrac{\gamma}{\sqrt{\alpha}\sqrt{\epsilon_{3}}}\right.\\
  &&\left.+\dfrac{a^{2}}{2\rho}\mathcal{S}_{1}+\dfrac{1}{\sqrt{\rho}}\right)\|U\|_{\mathcal{H}}\|F\|_{\mathcal{H}}.
\end{eqnarray*}
Hence, the result holds.
\end{proof}
\begin{lem}\label{LEMMA7}
For $m \in (0,1)$, assume that the hypothesis $(\mathrm{\mathbf{H}})$ holds. Then, the solution $U\in \mathcal{D}(\mathcal{A})$ satisfy
\begin{eqnarray*}
  \mu\int_{0}^{L}|u^{1}|^{2}dx\leq \alpha \int_{0}^{L}|v_{x}|^{2}dx+\mathcal{M}_{13}\|U\|_{\mathcal{H}}\|F\|_{\mathcal{H}},
\end{eqnarray*}
with $\mathcal{M}_{13}=2\xi \epsilon_{3} \left( \mathcal{S}_{3}+ \dfrac{b^{2}\xi\epsilon_{3}}{2} \mathcal{S}_{3}+2\dfrac{1}{\sqrt{\xi \epsilon \mu}}+ \left(\dfrac{1}{\xi}+\dfrac{\gamma^{2} }{4\xi \epsilon_{3}}\right)\mathcal{S}_{4}\right).$
\end{lem}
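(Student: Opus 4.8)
The plan is to reproduce almost verbatim the argument of Lemma~\ref{LEMMA4}, because the target constant $\mathcal{M}_{13}$ is structurally the same as $\mathcal{S}_{9}$ and so must arise from the identical combination of multipliers. First I would multiply $(\ref{CAS6})$ by $\overline{u^{1}}$ and integrate over $(0,L)$, and separately multiply $(\ref{CAS5})$ by $\overline{u^{2}}$ and integrate, using $u^{2}(0)=u^{2}(L)=0$ to integrate $\int_{0}^{L}u^{3}_{x}\overline{u^{2}}\,dx$ by parts into $-\int_{0}^{L}u^{3}\overline{u^{2}_{x}}\,dx$. Summing and taking real parts, the conjugate cross-terms $i\lambda\int_{0}^{L}u^{2}\overline{u^{1}}\,dx$ and $i\lambda\int_{0}^{L}u^{1}\overline{u^{2}}\,dx$ cancel, leaving the analogue of $(\ref{CA19})$, namely $\frac{\mu}{\xi\epsilon_{3}}\int_{0}^{L}|u^{1}|^{2}\,dx=\int_{0}^{L}|u^{2}|^{2}\,dx-\Re(b\int_{0}^{L}u^{2}\overline{u^{1}}\,dx)+\Re(\int_{0}^{L}u^{3}\overline{u^{2}_{x}}\,dx)+\Re(\int_{0}^{L}f^{4}\overline{u^{1}}\,dx)+\Re(\int_{0}^{L}f^{3}\overline{u^{2}}\,dx)$. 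Keeping the $b$-dependent cross-term (rather than discarding it) is exactly what will generate the $\frac{b^{2}\xi\epsilon_{3}}{2}\mathcal{S}_{3}$ contribution in $\mathcal{M}_{13}$.

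Next I would estimate the right-hand side exactly as in Lemma~\ref{LEMMA4}. The term $\int_{0}^{L}|u^{2}|^{2}\,dx$ and the $b$-cross-term are controlled by Young's inequality with parameter $r_{3}=\frac{\mu}{b\xi\epsilon_{3}}$ together with the available bound on $\int_{0}^{L}|u^{2}|^{2}\,dx$, producing the $\mathcal{S}_{3}$ and $\frac{b^{2}\xi\epsilon_{3}}{2}\mathcal{S}_{3}$ pieces; the $|u^{1}|^{2}$ part of the split is absorbed into the left-hand side, which is what forces the global prefactor $2\xi\epsilon_{3}$. The forcing terms $\Re(\int_{0}^{L}f^{4}\overline{u^{1}}\,dx)$ and $\Re(\int_{0}^{L}f^{3}\overline{u^{2}}\,dx)$ are bounded by Cauchy--Schwarz and the norm inequalities $\sqrt{\mu}\|u^{1}\|\leq\|U\|_{\mathcal{H}}$, $\sqrt{\xi\epsilon_{3}}\|u^{2}\|\leq\|U\|_{\mathcal{H}}$, $\sqrt{\xi\epsilon_{3}}\|f^{4}\|\leq\|F\|_{\mathcal{H}}$ and $\sqrt{\mu}\|f^{3}\|\leq\|F\|_{\mathcal{H}}$, giving the two copies of $\frac{1}{\sqrt{\xi\epsilon\mu}}$.

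The decisive step, and the only genuine departure from Lemma~\ref{LEMMA4}, is the treatment of $\Re(\int_{0}^{L}u^{3}\overline{u^{2}_{x}}\,dx)$. Using the compatibility condition $\xi u^{2}_{x}=u^{3}-\frac{\gamma}{\epsilon_{3}}v_{x}$ I would rewrite it as $\frac{1}{\xi}\int_{0}^{L}|u^{3}|^{2}\,dx-\frac{\gamma}{\xi\epsilon_{3}}\Re(\int_{0}^{L}u^{3}v_{x}\,dx)$, bound $\int_{0}^{L}|u^{3}|^{2}\,dx$ by Lemma~\ref{CAS10} (which supplies $\mathcal{S}_{4}=\mathcal{M}_{3}=\frac{1}{c\epsilon_{3}}$, legitimate since $c\neq 0$ here), and split the remaining cross-term by Young's inequality with a parameter tuned so that the coefficient of $\int_{0}^{L}|v_{x}|^{2}\,dx$ becomes exactly $\alpha$ after multiplying through by $\xi\epsilon_{3}$; the same choice fixes the factor $\frac{\gamma^{2}}{4\xi\epsilon_{3}}$ of $\mathcal{S}_{4}$. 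Crucially, and unlike in Lemma~\ref{LEMMA4}, I must not replace $\int_{0}^{L}|v_{x}|^{2}\,dx$ by a clean multiple of $\|U\|_{\mathcal{H}}\|F\|_{\mathcal{H}}$, since Lemma~\ref{CA11} is unavailable when $b=0$; I keep it on the right as $\alpha\int_{0}^{L}|v_{x}|^{2}\,dx$. Collecting all contributions and multiplying by $\xi\epsilon_{3}$ then yields $\mu\int_{0}^{L}|u^{1}|^{2}\,dx\leq\alpha\int_{0}^{L}|v_{x}|^{2}\,dx+\mathcal{M}_{13}\|U\|_{\mathcal{H}}\|F\|_{\mathcal{H}}$ with the stated constant.

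The main obstacle is exactly this impossibility of closing in $\|U\|_{\mathcal{H}}\|F\|_{\mathcal{H}}$ alone: in the $b=0$ regime the clean velocity estimate $\alpha\int_{0}^{L}|v_{x}|^{2}\,dx\lesssim\|U\|_{\mathcal{H}}\|F\|_{\mathcal{H}}$ of Lemma~\ref{CA11} no longer holds and is replaced only by the $\lambda$-dependent bound of Lemma~\ref{LEMMA1}, so $\alpha\int_{0}^{L}|v_{x}|^{2}\,dx$ cannot be removed at this stage and must be carried through. The delicate bookkeeping is to balance the single free Young parameter so that this surviving term has coefficient precisely $\alpha$, which is what allows Lemma~\ref{LEMMA7} to be combined cleanly with Lemmas~\ref{LEMMA5} and~\ref{LEMMA6} in the final step recovering $\|U\|_{\mathcal{H}}\lesssim\|F\|_{\mathcal{H}}$.
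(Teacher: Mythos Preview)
Your approach is essentially the paper's: its proof of Lemma~\ref{LEMMA7} is the single line ``utilize a comparable methodology as in Lemma~\ref{LEMMA4}'', and you have correctly unpacked that, including the one genuine modification---keeping $\alpha\int_{0}^{L}|v_{x}|^{2}\,dx$ on the right because Lemma~\ref{CA11} is unavailable when $b=0$.

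One caution, though, which is really an artefact of the paper's stated constant rather than your reasoning: in this subsection $b=0$, so $\mathcal{S}_{3}=\dfrac{1}{b\xi\epsilon_{3}}$ is undefined, the Young parameter $r_{3}=\dfrac{\mu}{b\xi\epsilon_{3}}$ you invoke makes no sense, and Lemma~\ref{CAS10} does \emph{not} supply a bound on $\int_{0}^{L}|u^{2}|^{2}\,dx$. The $b$-cross-term simply vanishes here, and the remaining $\int_{0}^{L}|u^{2}|^{2}\,dx$ must instead be controlled via Lemma~\ref{LEMMA5} (or directly via the compatibility relation and Poincar\'e), which feeds an additional contribution into the surviving $\int_{0}^{L}|v_{x}|^{2}\,dx$ term rather than producing an $\mathcal{S}_{3}$ piece. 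The structure of your argument is right; just replace the appeal to an ``available bound on $\int_{0}^{L}|u^{2}|^{2}\,dx$'' by the one that actually holds in the $b=0$ regime.
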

\begin{proof}
For the proof of this lemma, we utilize a comparable methodology as that used in the lemma \ref{LEMMA4} to derive an estimate for $\mu \|u^{1}\|^{2}$.
\end{proof}
\begin{proof}[Proof of theorem \ref{THEOREM2}]
First, we will prove the first condition $(\mathrm{\mathbf{C1}})$ by using a contradiction argument. Suppose that the condition $(\mathrm{\mathbf{C1}})$ doesn't hold, then there exists $l\in \mathbb{R}$ such that $il\notin\rho(\mathcal{A})$, it follows that there exists a sequence $\{(\lambda_{n},U_{n})_{n\geq1}\}\subset \mathbb{R}\times\mathcal{D}(\mathcal{A})$ such that $|\lambda_{n}|<l$,  $|\lambda_{n}|\rightarrow l$ as $n\rightarrow \infty $ with $\|U_{n}\|_{\mathcal{H}}=1$ and 
\begin{equation*}
i\lambda_{n}U_{n}-\mathcal{A}U_{n}=F_{n}\rightarrow 0, \;\; in \;\; \mathcal{H}, \; as \;\; n \rightarrow \infty. 
\end{equation*}
We take $U=U_{n}$, $F=F_{n}$ and $\lambda_{n}=\lambda$. From lemma \ref{CAS10}, one has 
\begin{equation*}
\int_{0}^{L}|w_{x}|^{2}dx\rightarrow 0,\;\;\; \int_{0}^{L}|u^{3}|^{2}dx\rightarrow 0. 
\end{equation*}
Since  $|\lambda_{n}|<l$, $\|U_{n}\|_{\mathcal{H}}=1$ and $\|F_{n}\|_{\mathcal{H}}\rightarrow 0$, then according to lemma \ref{LEMMA1}, we get
\begin{equation*}
\mathbb{B}(U_{n},F_{n})\rightarrow |l|^{\frac{1}{2}}+1, \;\; as\;\;\; n\rightarrow \infty,
\end{equation*}
it follows that 
\begin{equation*}
\int_{0}^{L}|v^{n}_{x}|^{2}dx\rightarrow 0,\;\; as\;\;\; n\rightarrow \infty.
\end{equation*}
Using this later with the fact that we have $\|F_{n}\|\rightarrow 0$ in lemma \ref{LEMMA5}, lemma \ref{LEMMA6} and lemma \ref{LEMMA7}, we obtain 
\begin{equation*}
\int_{0}^{L}|u^{3}|^{2}dx\rightarrow 0,\;\;\; \int_{0}^{L}|z|^{2}dx\rightarrow 0, \;\;\; \int_{0}^{L}|u^{1}|^{2}dx\rightarrow 0.
\end{equation*}
Thus, we get $\|U_{n}\|\rightarrow 0$ as $n\rightarrow \infty$ which contradict $\|U_{n}\|=1$. 
Now, we will check the second one $(\mathrm{\mathbf{C2}})$. Using a contradiction argument, suppose that there exist a sequence $\{(\lambda_{n}, U_{n})\}_{n\geq1}\subset \mathbb{R}^{\ast}\times\mathcal{D}(\mathcal{A})$ , such that
with $ |\lambda|\geq1$ and $|\lambda| \rightarrow \infty$,  such that
\begin{equation*} 
\begin{tabular}{ll}
$\|U_{n}\|_{\mathcal{H}}= 1$, &\\
$i\lambda U_{n}-\mathcal{A}U_{n} =F_{n}:=
(f^{1}_{n},f^{2}_{n},f^{3}_{n},f^{4}_{n},f^{5}_{n},f^{6}_{n},f^{7}_{n},)^\top \rightarrow 0.$\;\;\; in\;\; $\mathcal{H}$. &
\end{tabular}
\end{equation*}
Using lemma \ref{CAS10} and taking into account that we have $|\lambda_{n}| \rightarrow \infty$, $\|U_{n}\|_{\mathcal{H}}= 1$ and $F_{n}\rightarrow 0 $ in $\mathcal{H}$, we get
\begin{equation}
\int_{0}^{L}|w_{n}|^{2}dx=o(1), \;\;\; \int_{0}^{L}|u^{3}_{n}|^{2}dx=o(1),\;\;\;  \int_{0}^{+\infty} \int_{0}^{L}\sigma(s)|\kappa_{x,n}|^{2}dxds=o(1).
\end{equation}
From lemma \ref{LEMMA1} and since  $|\lambda_{n}| \rightarrow \infty$, $\|U_{n}\|_{\mathcal{H}}= 1$ and $F_{n}\rightarrow 0 $ in $\mathcal{H}$, we obtain
\begin{equation*}
\mathbb{B}(U_{n},F_{n})= (|\lambda|^{\frac{1}{2}}+1)\|U\|^{\frac{1}{2}}_{\mathcal{H}}+ \|F\|^{\frac{1}{2}}_{\mathcal{H}}=o(|\lambda|^{\frac{1}{2}}),
\end{equation*}
hence
\begin{equation*}
  \int_{0}^{L}|v_{x,n}|^{2}=o(1).
\end{equation*}
From the earliest identity, lemma \ref{LEMMA5}, lemma \ref{LEMMA6} and \ref{LEMMA7}, we get
\begin{equation}
\int_{0}^{L}|u^{2}_{n}|^{2}dx=o(1),\;\;\; \int_{0}^{L}|z_{n}|^{2}dx=o(1), \;\;\; \int_{0}^{L}|u^{1}_{n}|^{2}dx=o(1).
\end{equation}
it follows $\|U_{n}\|_{\mathcal{H}}=o(1)$ which is a contradiction with $\|U_{n}\|_{\mathcal{H}}=1$. Hence, the condition $(\mathrm{\mathbf{C2}})$ holds.
\end{proof}
\subsection{The electrical field component in $x$-direction is damped 
$\mathbf{b\neq0} $ \textbf{and} $\mathbf{c=0}$}
We give the following result considering a piezoelectric beam with Coleman-Gurtin thermal 
law and the electrical field component in $x$-direction is damped :
\begin{thm}\label{THEOREM3}
For $m\in(0,1)$, under the hypothesis $(\mathrm{\mathbf{H}})$, the $\mathcal{C}_{0}-$semigroup of contractions $(e^{t\mathcal{A}})_{t\geq0}$ is exponentially stable, i.e. there exists $M\geq 1$ and $\epsilon >0$ such  that
\begin{equation*}
  \|e^{t\mathcal{A}}U_{0}\|_{\mathcal{H}}\leq Me^{-\epsilon t} \|U_{0}\|_{\mathcal{H}}, \;\; \mathrm{for} \;\; t \geq 0.
\end{equation*}
\end{thm}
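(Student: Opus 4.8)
The plan is to verify the Huang--Pr\"uss conditions $(\mathrm{\mathbf{C1}})$, i.e. $i\mathbb{R}\subset\rho(\mathcal{A})$, and $(\mathrm{\mathbf{C2}})$, i.e. $\sup_{\lambda\in\mathbb{R}}\|(i\lambda I-\mathcal{A})^{-1}\|_{\mathcal{L}(\mathcal{H})}=o(1)$, following the scheme of the proofs of Theorems \ref{THEOREM1} and \ref{THEOREM2}. First I would fix $(\lambda,U=(v,z,u^{1},u^{2},u^{3},w,\kappa)^{\top})\in\mathbb{R}^{\ast}\times\mathcal{D}(\mathcal{A})$ with $|\lambda|\geq1$ and $(i\lambda I-\mathcal{A})U=F:=(f^{1},\dots,f^{7})^{\top}\in\mathcal{H}$, written componentwise as in $(\ref{CA3})$--$(\ref{CA9})$ with $b\neq0$ and $c=0$; in particular the fifth equation becomes $i\lambda u^{3}+\frac{\mu}{\epsilon_{3}}u^{1}_{x}-\frac{\gamma}{\epsilon_{3}}z_{x}=f^{5}$ (no $cu^{3}$ term), while the fourth, $i\lambda u^{2}+\frac{\mu}{\xi\epsilon_{3}}u^{1}+bu^{2}=f^{4}$, keeps its $b$--damping. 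Pairing the resolvent identity with $U$ and using $(\mathrm{\mathbf{H}})$ with $m\in(0,1)$, exactly as in Lemma \ref{CA10}, gives the dissipation bounds for $\int_{0}^{L}|w_{x}|^{2}dx$, $\int_{0}^{L}|w|^{2}dx$, $\int_{0}^{L}|u^{2}|^{2}dx$, $\int_{0}^{+\infty}\!\int_{0}^{L}\sigma(s)|\kappa_{x}|^{2}dxds$ and $\int_{0}^{L}|\Lambda^{m}_{x}|^{2}dx$, each $\lesssim\|U\|_{\mathcal{H}}\|F\|_{\mathcal{H}}$, where now it is $\int_{0}^{L}|u^{2}|^{2}dx$ (and not $\int_{0}^{L}|u^{3}|^{2}dx$, since $c=0$) that is controlled.

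The key step, specific to this case, is the observation that in fact $u^{2}\equiv0$. Differentiating $i\lambda u^{2}+\frac{\mu}{\xi\epsilon_{3}}u^{1}+bu^{2}=f^{4}$ in $x$ yields $\frac{\mu}{\epsilon_{3}}u^{1}_{x}=\xi f^{4}_{x}-\xi(i\lambda+b)u^{2}_{x}$; inserting this together with $z_{x}=i\lambda v_{x}-f^{1}_{x}$ (from $(\ref{CA3})$) into the fifth equation and then using the compatibility condition $\xi u^{2}_{x}-u^{3}+\frac{\gamma}{\epsilon_{3}}v_{x}=0$, the terms carrying $i\lambda\xi u^{2}_{x}$ cancel and there remains $\xi b\,u^{2}_{x}=\xi f^{4}_{x}-f^{5}+\frac{\gamma}{\epsilon_{3}}f^{1}_{x}$, whose right--hand side vanishes because $F\in\mathcal{H}$ obeys the same compatibility relation; since $b\neq0$ and $u^{2}(0)=0$ this forces $u^{2}\equiv0$. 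Two consequences follow at once: the fourth equation gives $u^{1}=\frac{\xi\epsilon_{3}}{\mu}f^{4}$, hence $\mu\int_{0}^{L}|u^{1}|^{2}dx\lesssim\|F\|_{\mathcal{H}}^{2}$; and the compatibility condition collapses to $u^{3}=\frac{\gamma}{\epsilon_{3}}v_{x}$, hence $\epsilon_{3}\int_{0}^{L}|u^{3}|^{2}dx=\frac{\gamma^{2}}{\epsilon_{3}}\int_{0}^{L}|v_{x}|^{2}dx$. Everything is thereby reduced to estimating $v_{x}$ and $z$.

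For $v_{x}$ I would reproduce Lemma \ref{LEMMA1} essentially verbatim, since its proof is insensitive to which of $b,c$ vanishes: combining $(\ref{CA8})$ with $z_{x}=i\lambda v_{x}-f^{1}_{x}$ gives $i\lambda a\,v_{x}=-i\lambda w+d\Lambda^{m}_{xx}+f^{6}+af^{1}_{x}$; multiplying by $-i\lambda^{-1}\overline{v_{x}}$, integrating over $(0,L)$, integrating the $\Lambda^{m}_{xx}$--term by parts, estimating the boundary term $\Lambda^{m}_{x}(0)v_{x}(0)$ through the Gagliardo--Nirenberg inequality together with $\|\Lambda^{m}_{xx}\|\lesssim|\lambda|\|U\|_{\mathcal{H}}+\|F\|_{\mathcal{H}}$ (from the same identity) and $\|v_{xx}\|\lesssim(|\lambda|+1)\|U\|_{\mathcal{H}}+\|F\|_{\mathcal{H}}$ (from $(\ref{CA4})$ after inserting $(\ref{CA5})$), and then absorbing the $\int_{0}^{L}|v_{x}|^{2}dx$--term into the left side, I obtain an estimate for $\alpha\int_{0}^{L}|v_{x}|^{2}dx$ of exactly the form of Lemma \ref{LEMMA1}, controlled by $\|U\|_{\mathcal{H}}\|F\|_{\mathcal{H}}$ times bounded powers of $|\lambda^{-1}|$ and of $\mathbb{B}(U,F)$. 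For $z$ I would imitate Lemma \ref{LEMMA6}: multiplying $(\ref{CA4})$ by $-\rho i\lambda^{-1}\overline{z}$, using $(\ref{CA3})$, and bounding the cross term $\gamma\int_{0}^{L}u^{3}\overline{v_{x}}\,dx$ by Young's inequality via $u^{3}=\frac{\gamma}{\epsilon_{3}}v_{x}$, gives $\rho\int_{0}^{L}|z|^{2}dx\lesssim\alpha\int_{0}^{L}|v_{x}|^{2}dx+\|U\|_{\mathcal{H}}\|F\|_{\mathcal{H}}$.

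Collecting these estimates, every term of $\|U\|_{\mathcal{H}}^{2}$ is bounded by $\|U\|_{\mathcal{H}}\|F\|_{\mathcal{H}}$ up to the $\lambda$--dependent factors inherited from the $v_{x}$--estimate, and the argument closes exactly as in the proof of Theorem \ref{THEOREM2}: a contradiction argument for $(\mathrm{\mathbf{C1}})$ with a sequence $\lambda_{n}\to l$, $\|U_{n}\|_{\mathcal{H}}=1$, $F_{n}\to0$ forces $\|U_{n}\|_{\mathcal{H}}\to0$, while a contradiction argument for $(\mathrm{\mathbf{C2}})$ with $|\lambda_{n}|\to\infty$ forces $\|U_{n}\|_{\mathcal{H}}=o(1)$; the Huang--Pr\"uss theorem then yields the claimed exponential decay. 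The main obstacle is, as in Lemma \ref{LEMMA1}, the control of $v_{x}$: the boundary term produced by the integration by parts carries factors of $|\lambda|$, so the Gagliardo--Nirenberg interpolation must be balanced against the $|\lambda^{-1}|$ prefactor so that, in both contradiction arguments, the right--hand side stays $o(1)$; verifying that the fractional powers of $\|U\|_{\mathcal{H}}$, $\|F\|_{\mathcal{H}}$ and $|\lambda|$ combine correctly is the delicate part, whereas the reduction $u^{2}\equiv0$ makes the remainder of the argument shorter than in the earlier cases.
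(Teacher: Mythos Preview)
Your proposal is correct and follows the paper's overall scheme (Huang--Pr\"uss via the resolvent estimates for $w_x$, $\Lambda^m_x$, $v_x$, $z$), but your central algebraic step is sharper than what the paper does. The paper never observes the pointwise identity $u^{2}\equiv 0$; instead it runs two separate multiplier arguments: Lemma~\ref{LEMMA8} pairs the differentiated fourth equation and the fifth equation against $\bar u^{3}$ to obtain the \emph{integrated} identity $b\epsilon_{3}\!\int|u^{3}|^{2}=b\gamma\!\int v_{x}\bar u^{3}$ and then Young's inequality, while Lemma~\ref{LEMMA9} estimates $\mu\!\int|u^{1}|^{2}$ by the multiplier technique of Lemma~\ref{LEMMA4}. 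Your route performs the very same elimination but \emph{pointwise}: the compatibility relations for $U$ and $F$ kill both the $i\lambda$--term and the right--hand side simultaneously, leaving $\xi b\,u^{2}_{x}=0$; with $b\neq0$ and $u^{2}(0)=0$ this forces $u^{2}\equiv0$ exactly, whence $u^{1}=\tfrac{\xi\epsilon_{3}}{\mu}f^{4}$ and $u^{3}=\tfrac{\gamma}{\epsilon_{3}}v_{x}$ identically. This replaces Lemmas~\ref{LEMMA8} and \ref{LEMMA9} by a one--line computation and turns the inequality of Lemma~\ref{LEMMA8} into an equality; the remaining steps (the $v_x$--estimate via $(\ref{CA8})$/Gagliardo--Nirenberg and the $z$--estimate) are exactly those of Lemmas~\ref{CASE14} and \ref{CASE15} in the paper, which indeed make no use of a $c$--damping. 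So both arguments close the same way; yours is simply more economical.
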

Following Hung and Pr\"{u}ss in \cite{17,23}, we have to check if the following conditions hold:
\begin{equation*}
i\mathbb{R}\subset \rho(\mathcal{A}) \;\;\;\;\;\;\;\;\;\;\;\;\;\;\;\;\;\; \;\;\;\;\;\;\;\;\;\;\;\;\;\;\;\;\;\;\;\;\;\;\;\;\;\;\;\;\;\;(\mathrm{\mathbf{C1}})
\end{equation*}
and
\begin{equation*}
\sup_{\lambda \in \mathbb{R}}\|(i\lambda I-\mathcal{A})^{-1}\|_{\mathcal{L}(\mathcal{H})}=o(1). \;\;\;\;\;\;\;\;\;\;\;\;\;\;\;\;\;\; (\mathrm{\mathbf{C2}})
\end{equation*}
The proof of $(\mathrm{\mathbf{C2}})$ is presented below through the use of several lemmas.\\
  Let $(\lambda,U=(v,z,u^{1},u^{2},u^{3},w,\kappa)^\top)\in \mathbb{R}^{\ast}\times\mathcal{D}(\mathcal{A})$, with $|\lambda|\geq1$, such that
  \begin{equation}
  (i\lambda I- \mathcal{A})U=F:=(f^{1},f^{2},f^{3},f^{4},f^{5},f^{6},f^{7})^\top\in \mathcal{H},
  \end{equation}
  detailed as
  \begin{eqnarray}
    i\lambda v-z &=& f^{1}, \label{CASE3} \\
    i\lambda z-\frac{\alpha}{\rho}v_{xx}-\frac{\gamma}{\rho}u^{3}_{x}+\frac{a}{\rho}w_{x} &=& f^{2},\label{CASE4} \\
    i\lambda u^{1}-u^{2}+u^{3}_{x} &=& f^{3},\label{CASE5} \\
    i\lambda u^{2}+\frac{\mu}{\xi \epsilon_{3}}u^{1}+bu^{2} &=& f^{4}, \label{CASE6}\\
    i\lambda u^{3}+\frac{\mu}{\epsilon_{3}}u^{1}_{x}-\frac{\gamma}{\epsilon_{3}}z_{x}&=& f^{5}, \label{CASE7}\\
    i\lambda w-\lambda \Lambda^{m}_{xx}+az_{x} &=& f^{6}, \label{CASE8}\\
    i\lambda k+k_{s}-w &=& f^{7}.\label{CASE9}
  \end{eqnarray}
We start with the following lemma
\begin{lem}\label{CASE10}
For $m\in (0,1)$, assume that the hypothesis  $(\mathrm{\mathbf{H}})$ holds. The solution $U\in \mathcal{D}(\mathcal{A})$ satisfy the following estimates
\begin{eqnarray}
\int_{0}^{L}|w_{x}|^{2}dx&\leq&\mathcal{L}_{1}\|U\|_{\mathcal{H}}\|F\|_{\mathcal{H}},\\
\int_{0}^{L}|w|^{2}dx&\leq&\mathcal{L}_{2}\|U\|_{\mathcal{H}}\|F\|_{\mathcal{H}},\\
\int_{0}^{L}|u^{2}|^{2}dx&\leq&\mathcal{L}_{3}\|U\|_{\mathcal{H}}\|F\|_{\mathcal{H}},
\end{eqnarray}
and
\begin{eqnarray}
  \int_{0}^{+\infty} \int_{0}^{L}\sigma(s)|\kappa_{x}|^{2}dxds&\leq& \mathcal{L}_{4}\|U\|_{\mathcal{H}}\|F\|_{\mathcal{H}}, \\
  \int_{0}^{L}|\Lambda_{x}^{m}|^{2}dx &\leq& \mathcal{L}_{5}\|U\|_{\mathcal{H}}\|F\|_{\mathcal{H}},
\end{eqnarray}
with
\begin{eqnarray*}
\mathcal{L}_{1}&=&\dfrac{1}{(1-m)d},\;\; \mathcal{L}_{2}=\dfrac{C_{P}}{(1-m)d},\;\; \mathcal{L}_{3}=\dfrac{1}{b\xi\epsilon_{3}},\\
 \mathcal{L}_{4}&=&\frac{2}{mdd_{\sigma}},\;\; \mathcal{L}_{5}=\dfrac{2(1-m)}{d}+\dfrac{4g(0)}{mdd_{\sigma}}.
\end{eqnarray*}
\end{lem}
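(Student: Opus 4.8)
The plan is to follow the proof of Lemma \ref{CA10} essentially verbatim; the only structural change is that when $c=0$ the dissipation carried by $u^{3}$ vanishes, so there is no analogue of the $\int_{0}^{L}|u^{3}|^{2}dx$ estimate here and we keep only the five bounds stated.

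First I would take the $\mathcal{H}$-inner product of the resolvent identity $(i\lambda I-\mathcal{A})U=F$ with $U\in\mathcal{D}(\mathcal{A})$ and take real parts. Since $\Re\langle i\lambda U,U\rangle_{\mathcal{H}}=0$, the dissipativity computation of Proposition \ref{PROPO1} specialized to $c=0$ gives
\begin{equation*}
b\xi\epsilon_{3}\int_{0}^{L}|u^{2}|^{2}dx+(1-m)d\int_{0}^{L}|w_{x}|^{2}dx-\frac{md}{2}\int_{0}^{L}\int_{0}^{\infty}\sigma^{\prime}(s)|\kappa_{x}|^{2}dsdx=-\Re\langle\mathcal{A}U,U\rangle_{\mathcal{H}}\leq\|F\|_{\mathcal{H}}\|U\|_{\mathcal{H}}.
\end{equation*}
Invoking the Dafermos condition $\sigma^{\prime}(s)\leq-d_{\sigma}\sigma(s)$ from $(\mathrm{\mathbf{H}})$ converts the memory term into a positive multiple $\frac{mdd_{\sigma}}{2}\int_{0}^{L}\int_{0}^{\infty}\sigma(s)|\kappa_{x}|^{2}dsdx$ of the $W$-norm; since $m\in(0,1)$ the three contributions on the left are all nonnegative, so bounding each in turn by $\|F\|_{\mathcal{H}}\|U\|_{\mathcal{H}}$ and dividing by the respective coefficients yields the estimates for $\int_{0}^{L}|w_{x}|^{2}dx$, $\int_{0}^{L}|u^{2}|^{2}dx$ and $\int_{0}^{L}\int_{0}^{\infty}\sigma(s)|\kappa_{x}|^{2}dsdx$ with the constants $\mathcal{L}_{1}$, $\mathcal{L}_{3}$, $\mathcal{L}_{4}$. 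The bound on $\int_{0}^{L}|w|^{2}dx$ then follows from Poincaré's inequality applied to $w\in H^{1}_{0}(0,L)$, with Poincaré constant $C_{P}$, giving $\mathcal{L}_{2}=C_{P}\mathcal{L}_{1}$.

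For the last estimate I would differentiate $\Lambda^{m}=(1-m)w+m\int_{0}^{\infty}\sigma(s)\kappa(s)ds$ in $x$, apply $2ab\leq a^{2}+b^{2}$ and the $\sigma$-weighted Cauchy--Schwarz inequality $\bigl(\int_{0}^{\infty}\sigma(s)|\kappa_{x}(s)|ds\bigr)^{2}\leq g(0)\int_{0}^{\infty}\sigma(s)|\kappa_{x}(s)|^{2}ds$ (using $\int_{0}^{\infty}\sigma=g(0)$), then integrate over $(0,L)$ to obtain $\int_{0}^{L}|\Lambda^{m}_{x}|^{2}dx\leq 2(1-m)^{2}\int_{0}^{L}|w_{x}|^{2}dx+2g(0)\int_{0}^{L}\int_{0}^{\infty}\sigma(s)|\kappa_{x}|^{2}dsdx$; inserting the two bounds already proved gives $\mathcal{L}_{5}=2(1-m)^{2}\mathcal{L}_{1}+2g(0)\mathcal{L}_{4}$.

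Since everything reduces to the single energy identity together with the Dafermos condition, Poincaré and two elementary inequalities, there is no genuine obstacle; the only point worth flagging is that, in contrast with Lemma \ref{CA10}, the $u^{3}$-dissipation is absent, so the estimate on $\int_{0}^{L}|u^{3}|^{2}dx$ is not available at this stage and must be recovered later directly from equations \eqref{CASE3}--\eqref{CASE9}, exactly as in the preceding subsections.
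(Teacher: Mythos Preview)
Your proposal is correct and matches the paper's own argument: the paper simply states that the proof uses the same steps as Lemma \ref{CA10}, and what you have written is precisely that computation specialized to $c=0$. Your closing remark that the $u^{3}$ estimate is unavailable here and must be recovered later from the resolvent equations is also accurate and consistent with how the subsequent lemmas of this subsection proceed.
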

\begin{proof}
For the proof of the lemma, we use the same steps as in Lemma 2 to arrive at our result.
\end{proof}
\begin{lem}\label{CASE14}
For $m\in (0,1)$, assume that the hypothesis $(\mathcal{\mathbf{H}})$ holds, then the solution $U\in \mathcal{D}(\mathcal{A})$ satisfy
\begin{eqnarray*}
 \alpha \int_{0}^{L}|v_{x}|^{2}dx &\lesssim&\mathcal{M}_{12} \left(|\lambda^{-1}|+1\right)\|U\|_{\mathcal{H}}\|F\|_{\mathcal{H}}\\
 &&+|\lambda^{-1}| \|U\|_{\mathcal{H}}^{\frac{1}{2}}\|F\|_{\mathcal{H}}^{\frac{1}{2}}\left(\left(|\lambda|+1\right)\|U\|_{\mathcal{H}}+\|F\|_{\mathcal{H}}\right)\\
&&+|\lambda^{-1}|\|F\|^{\frac{1}{4}}_{\mathcal{H}} \left( \mathbb{B}(U,F)^{2}\|U\|^{\frac{3}{4}}_{\mathcal{H}}
   +\mathbb{B}(U,F) \|U\|^{\frac{5}{4}}_{\mathcal{H}}\right),
 \end{eqnarray*}
 with $\mathbb{B}(U,F)=(|\lambda|^{\frac{1}{2}}+1)\|U\|^{\frac{1}{2}}_{\mathcal{H}}+\|F\|^{\frac{1}{2}}_{\mathcal{H}}$.
\end{lem}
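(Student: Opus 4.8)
The plan is to reproduce, essentially verbatim, the argument of Lemma~\ref{LEMMA1}. The crucial observation is that the resolvent identity for $v_x$ used there is obtained from the $w$-equation $(\ref{CASE8})$ together with $(\ref{CASE3})$, and neither of these involves the damping coefficients; likewise the representation of $v_{xx}$ that feeds the estimate comes only from $(\ref{CASE4})$ and $(\ref{CASE5})$, which are again insensitive to $b$ and $c$. Hence passing from the case $b=0,\ c\neq0$ to the present case $b\neq0,\ c=0$ does not affect this particular estimate at all, and the constants $\mathcal{M}_{12}$ and the quantity $\mathbb{B}(U,F)$ remain the same.

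Concretely, I would first eliminate $z_x=i\lambda v_x-f^1_x$ from $(\ref{CASE8})$ to obtain the analogue of $(\ref{CAS11})$, namely $i\lambda a v_x=-i\lambda w+d\Lambda^m_{xx}+f^6+af^1_x$. Pairing this with $-i\lambda^{-1}\overline{v_x}$, integrating over $(0,L)$ and integrating by parts once in the $\Lambda^m_{xx}$ term (which produces the boundary contribution $\Lambda^m_x(0)v_x(0)$), I reach the five-term inequality mirroring $(\ref{CAS20})$, with right-hand side controlled by $\int_0^L|w||v_x|\,dx$, $d|\lambda^{-1}|\int_0^L|\Lambda^m_x||v_{xx}|\,dx$, $d|\lambda^{-1}||\Lambda^m_x(0)||v_x(0)|$, $|\lambda^{-1}|\int_0^L|f^6||v_x|\,dx$ and $a|\lambda^{-1}|\int_0^L|f^1_x||v_x|\,dx$.

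Then I would bound each term exactly as in Lemma~\ref{LEMMA1}, invoking Lemma~\ref{CASE10} in place of Lemma~\ref{CAS10}: Young's inequality for the terms carrying $|w|$, $|f^6|$, $|f^1_x|$; for the term with $|v_{xx}|$ I substitute the representation of $v_{xx}$ from $(\ref{CASE4})$–$(\ref{CASE5})$ to get $\|v_{xx}\|\lesssim(|\lambda|+1)\|U\|_{\mathcal{H}}+\|F\|_{\mathcal{H}}$ and hence $\|\Lambda^m_{xx}\|\lesssim|\lambda|\|U\|_{\mathcal{H}}+\|F\|_{\mathcal{H}}$; and for the boundary term I control $|\Lambda^m_x(0)|$ and $|v_x(0)|$ via the Gagliardo--Nirenberg interpolation exactly as in $(\ref{CAS15})$ and $(\ref{CAS16})$, which yields the $\mathbb{B}(U,F)$-type bounds. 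Inserting everything and choosing $\delta_1=\tfrac{1}{2(a-\alpha)}$ to absorb the stray $\int_0^L|v_x|^2\,dx$ on the right-hand side gives the stated inequality.

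The only genuinely delicate point — and the step I would single out as the main obstacle — is the bookkeeping of the fractional powers of $\|U\|_{\mathcal{H}}$, $\|F\|_{\mathcal{H}}$ and $|\lambda|$ generated by the two Gagliardo--Nirenberg estimates for $|\Lambda^m_x(0)|$ and $|v_x(0)|$ (the source of the $\|F\|_{\mathcal{H}}^{1/4}$ and $\|U\|_{\mathcal{H}}^{3/4},\|U\|_{\mathcal{H}}^{5/4}$ exponents). Everything else is a routine transcription of the computation already done in Lemma~\ref{LEMMA1}, and the only thing one must actively verify is that no intermediate step implicitly uses $c\neq0$ — which it does not.
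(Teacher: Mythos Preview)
Your proposal is correct and matches the paper's own treatment: the paper simply states that the proof is given similarly as in Lemma~\ref{LEMMA1}, and you have spelled out precisely that transcription, correctly observing that none of the ingredients (the identity for $v_x$ from $(\ref{CASE8})$--$(\ref{CASE3})$, the representation of $v_{xx}$ from $(\ref{CASE4})$--$(\ref{CASE5})$, and the dissipation estimates of Lemma~\ref{CASE10}) involve the parameter $c$.
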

\begin{proof}
For the proof of this lemma is given similarly as in lemma \ref{LEMMA1}.
\end{proof}
\begin{lem}\label{LEMMA8}
For $m \in (0,1)$, assume that the condition $(\mathrm{\mathbf{H}})$ holds. Then the solution $U\in \mathcal{D}(\mathcal{A})$ satisfy
\begin{equation*}
  \epsilon_{3}\int_{0}^{L}|u^{3}|^{2}dx\leq \dfrac{\gamma^{2}}{\epsilon_{3}}\int_{0}^{L}|v_{x}|^{2}dx,
\end{equation*}
\end{lem}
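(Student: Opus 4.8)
The plan is to show that in this case $u^{3}$ is not merely controlled by $v_{x}$ but in fact equals $\frac{\gamma}{\epsilon_{3}}v_{x}$ pointwise, so that the asserted inequality holds with equality. The identity comes purely from the resolvent equations (\ref{CASE3})--(\ref{CASE7}) together with the compatibility relation $\xi u^{2}_{x}-u^{3}+\frac{\gamma}{\epsilon_{3}}v_{x}=0$ built into $\mathcal{H}$, used both for $U$ and for $F$; no analytic estimate is needed.

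First I would record that $u^{2}\in H^{2}(0,L)$: the compatibility relation gives $u^{2}_{x}=\frac{1}{\xi}\bigl(u^{3}-\frac{\gamma}{\epsilon_{3}}v_{x}\bigr)$, and for $U\in\mathcal{D}(\mathcal{A})$ one has $u^{3}\in H^{1}(0,L)$ and $v\in H^{2}(0,L)$, whence $u^{2}_{x}\in H^{1}(0,L)$. This justifies differentiating (\ref{CASE6}) in $x$ (all other terms being in $L^{2}$ since $u^{1}\in H^{1}_{0}(0,L)$ and $f^{4}_{x}\in L^{2}(0,L)$ for $F\in\mathcal{H}$), which yields $i\lambda u^{2}_{x}+\frac{\mu}{\xi\epsilon_{3}}u^{1}_{x}+bu^{2}_{x}=f^{4}_{x}$. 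Substituting $\xi u^{2}_{x}=u^{3}-\frac{\gamma}{\epsilon_{3}}v_{x}$ and multiplying by $\xi$ gives
\begin{equation*}
(i\lambda+b)u^{3}+\frac{\mu}{\epsilon_{3}}u^{1}_{x}-(i\lambda+b)\frac{\gamma}{\epsilon_{3}}v_{x}=\xi f^{4}_{x}.
\end{equation*}

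Next I would subtract (\ref{CASE7}) (in which $c=0$): the $i\lambda u^{3}$ and $\frac{\mu}{\epsilon_{3}}u^{1}_{x}$ terms cancel, leaving
\begin{equation*}
bu^{3}-(i\lambda+b)\frac{\gamma}{\epsilon_{3}}v_{x}+\frac{\gamma}{\epsilon_{3}}z_{x}=\xi f^{4}_{x}-f^{5}.
\end{equation*}
I would then insert $z_{x}=i\lambda v_{x}-f^{1}_{x}$, obtained by differentiating (\ref{CASE3}); the two $i\lambda\frac{\gamma}{\epsilon_{3}}v_{x}$ contributions cancel and one is left with $bu^{3}-b\frac{\gamma}{\epsilon_{3}}v_{x}=\xi f^{4}_{x}-f^{5}+\frac{\gamma}{\epsilon_{3}}f^{1}_{x}$. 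The right-hand side is precisely the compatibility relation of $\mathcal{H}$ evaluated on $F$, hence vanishes; since $b\neq0$ this forces $u^{3}=\frac{\gamma}{\epsilon_{3}}v_{x}$, and therefore $\epsilon_{3}\int_{0}^{L}|u^{3}|^{2}\,dx=\frac{\gamma^{2}}{\epsilon_{3}}\int_{0}^{L}|v_{x}|^{2}\,dx$, which is even slightly stronger than the stated bound.

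I do not expect a real obstacle: the whole lemma is an algebraic consequence of (\ref{CASE3}), (\ref{CASE6}), (\ref{CASE7}) and the two compatibility relations, and one checks that the $\lambda$-dependence cancels identically, so the conclusion holds with no restriction on $\lambda$ and with no error term $\|U\|_{\mathcal{H}}\|F\|_{\mathcal{H}}$. The only points deserving a line of justification are the regularity $u^{2}\in H^{2}(0,L)$ (needed to differentiate (\ref{CASE6})) and careful sign bookkeeping when combining the differentiated equations.
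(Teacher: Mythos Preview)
Your argument is correct and in fact sharper than the paper's. The paper proceeds in integral form: it multiplies (\ref{CASE7}) by $-i\lambda^{-1}\epsilon_{3}\bar{u^{3}}$ and the differentiated (\ref{CASE6}) by $i\lambda^{-1}\xi\epsilon_{3}\bar{u^{3}}$, integrates, uses the compatibility relation for $U$ twice, sums, and after invoking the compatibility relation for $F$ obtains the integral identity $b\epsilon_{3}\int_{0}^{L}|u^{3}|^{2}\,dx=\gamma b\,\Re\int_{0}^{L}v_{x}\bar{u^{3}}\,dx$; Young's inequality then gives the stated bound. You instead combine the same equations pointwise and extract the stronger conclusion $u^{3}=\frac{\gamma}{\epsilon_{3}}v_{x}$, so that the lemma holds with equality. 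Your route is shorter and explains \emph{why} the inequality has no $\|U\|_{\mathcal{H}}\|F\|_{\mathcal{H}}$ remainder; the paper's route stays within the energy-estimate style used elsewhere in the article but obscures the underlying algebraic identity. The regularity check $u^{2}\in H^{2}(0,L)$ that you flag is indeed needed (and implicitly used in the paper as well when it differentiates (\ref{CASE6})).
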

\begin{proof}
Inserting the equation (\ref{CASE3}) in (\ref{CASE7}) to get the following equation
\begin{equation*}
i\lambda u^{3}+\dfrac{\mu}{\epsilon_{3}}u^{1}_{x}-i\lambda \dfrac{\gamma}{\epsilon_{3}}v_{x}+\dfrac{\gamma}{\epsilon_{3}}f^{1}_{x}=f^{5}.
\end{equation*}
Multiplying the previous equation by $-i\lambda^{-1}\epsilon_{3}\bar{u^{3}}$, then  we obtain
\begin{eqnarray}
\epsilon_{3}\int_{0}^{L}|u^{3}|^{2}dx-i\lambda^{-1}\mu \int_{0}^{L}u^{1}_{x}\bar{u^{3}}dx \notag\\
-\gamma \int_{0}^{L}v_{x}\bar{u^{3}}dx&=&-i\lambda^{-1}\epsilon_{3}\int_{0}^{L}f^{5}\bar{u^{3}}dx\notag
\\
&&+i\lambda^{-1}\gamma \int_{0}^{L}f^{1}_{x}\bar{u^{3}}dx. \label{CASE122}
\end{eqnarray}
We have from (\ref{CASE6}) the following equation
\begin{equation*}
i\lambda u^{2}_{x}+\dfrac{\mu}{\epsilon_{3}\xi}u^{1}_{x}+bu^{2}_{x}=f^{4}_{x},
\end{equation*}
Multiplying by $i\lambda^{-1}\xi \epsilon_{3}u^{3}$, the integrating over $(0,L)$, we obtain
\begin{eqnarray}\label{CASE16}
-\xi \epsilon_{3}\int_{0}^{L}u^{2}_{x}\bar{u^{3}}dx+i\lambda^{-1}\mu \int_{0}^{L}u^{1}_{x}\bar{u^{3}}dx\notag\\
+i\lambda^{-1} b \xi \epsilon_{3}\int_{0}^{L}u^{2}_{x}u^{3}dx&=&i\lambda^{-1}\xi \epsilon_{3}\int_{0}^{L}f^{4}_{x}\bar{u^{3}}dx.
\end{eqnarray}
Making use of  the compatibility condition $\xi u^{2}_{x}-u^{3}+\dfrac{\gamma}{\epsilon_{3}}v_{x}=0$ in the first term of (\ref{CASE16}), it follows
 \begin{eqnarray}\label{CASE17}
   -\epsilon_{3}\int_{0}^{L}|u^{3}|^{2}+\gamma \int_{0}^{L}v_{x}u^{3}dx\notag\\
   +i\lambda^{-1}\mu \int_{0}^{L}u^{1}_{x}\bar{u^{3}}dx
   +i\lambda^{-1} b \xi \epsilon_{3}\int_{0}^{L}u^{2}_{x}u^{3}dx&=&i\lambda^{-1}\xi \epsilon_{3}\int_{0}^{L}f^{4}_{x}\bar{u^{3}}dx.
 \end{eqnarray}
 Summing the results (\ref{CASE122}) and (\ref{CASE17}), one has
 \begin{equation}\label{CASE18}
 \xi \epsilon_{3} b\int_{0}^{L}u^{2}_{x}\bar{u^{3}}dx=\xi \epsilon_{3}\int_{0}^{L}f^{4}_{x}\bar{u^{3}}dx
 -\epsilon_{3}\int_{0}^{L}f^{5}\bar{u^{3}}dx+\gamma\int_{0}^{L}f^{1}_{x}u^{3}dx.
 \end{equation}
 Using the compatibility condition $\xi u^{2}_{x}-u^{3}+\dfrac{\gamma}{\epsilon_{3}}v_{x}=0$ in the first term of (\ref{CASE18}), we obtain
 \begin{equation*}
 \epsilon_{3} b\int_{0}^{L}|u^{3}|^{2}dx=\gamma b \int_{0}^{L}v_{x}u^{3}dx+\xi\epsilon_{3} \int_{0}^{L}f^{4}_{x}\bar{u^{3}}dx-\epsilon_{3}\int_{0}^{L}f^{5}\bar{u^{3}}dx+\gamma\int_{0}^{L}f^{1}_{x}u^{3}dx.
 \end{equation*}
We use the compatibility condition $\xi f^{4}_{x}-f^{5}+\dfrac{\gamma}{\epsilon_{3}}f^{1}_{x}=0$ for  $F\in \mathcal{H}$, in the previous equality to get the following result.
 \begin{equation*}
  b\epsilon_{3}\int_{0}^{L}|u^{3}|^{2}dx=\gamma b \int_{0}^{L}v_{x}u^{3}dx,
 \end{equation*}
 then,
 \begin{equation}\label{CASE22}
  \epsilon_{3} \int_{0}^{L}|u^{3}|^{2}dx \leq \gamma  \int_{0}^{L}|v_{x}||u^{3}|dx.
 \end{equation}
We estimate the term on the right hand side by using Young's inequality, we obtain
\begin{equation}\label{CASE21}
\gamma \int_{0}^{L}|v_{x}||u^{3}|dx \leq \dfrac{\gamma^{2}}{2\epsilon_{3}}\int_{0}^{L}|v_{x}|^{2}dx+\dfrac{\epsilon_{3}}{2}\int_{0}^{L}|u^{3}|^{2}dx.
\end{equation}
Inserting the inequality (\ref{CASE21}) in (\ref{CASE22}) we get our result.
\end{proof}
\begin{lem}\label{CASE15}
For $m \in (0,1)$, assume that the condition $(\mathrm{\mathbf{H}})$ holds. Then the solution $U\in \mathcal{D}(\mathcal{A})$ satisfy
\begin{eqnarray*}
  \rho\int_{0}^{L}|z|^{2}dx&\lesssim& \int_{0}^{L}|v_{x}|^{2}dx+\|U\|_{\mathcal{H}}\|F\|_{\mathcal{H}} +(|\lambda^{-1}|+1)\mathbb{A}(U,F)\|F\|_{\mathcal{H}},
\end{eqnarray*}
with $\mathbb{A}(U,F)=\|U\|_{\mathcal{H}}+\|F\|_{\mathcal{H}}$.
\end{lem}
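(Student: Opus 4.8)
The plan is to follow the proof of Lemma~\ref{LEMMA3} almost line for line, with a single substitution forced by the present case: since here we no longer have a direct bound of the form $\alpha\|v_x\|^{2}\lesssim\|U\|_{\mathcal H}\|F\|_{\mathcal H}$ available, the cross term $\gamma\int_{0}^{L}u^{3}\overline{v_x}\,dx$ that appears cannot simply be discarded but must be absorbed into $\int_{0}^{L}|v_x|^{2}\,dx$ by means of Lemma~\ref{LEMMA8} (this is precisely where the hypothesis $c=0$ enters), which is why $\int_{0}^{L}|v_x|^{2}\,dx$ must be kept on the right-hand side of the estimate.

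First I would multiply \eqref{CASE4} by $-\rho i\lambda^{-1}\overline{z}$ and integrate over $(0,L)$. Integrating by parts in the $v_{xx}$- and $u^{3}_{x}$-terms produces boundary contributions which vanish: at $x=0$ because $z(0)=i\lambda v(0)-f^{1}(0)=0$, and at $x=L$ because they collapse to $-i\lambda^{-1}\overline{z}(L)\bigl(\alpha v_x(L)+\gamma u^{3}(L)\bigr)=0$ by the coupling condition in $\mathcal D(\mathcal A)$. Exactly as in the derivation of \eqref{CA1A6} this yields
\begin{align*}
\rho\int_{0}^{L}|z|^{2}\,dx={}&i\lambda^{-1}\alpha\int_{0}^{L}v_x\,\overline{z_x}\,dx+i\lambda^{-1}\gamma\int_{0}^{L}u^{3}\,\overline{z_x}\,dx\\
&+i\lambda^{-1}a\int_{0}^{L}w_x\,\overline{z}\,dx-i\lambda^{-1}\rho\int_{0}^{L}f^{2}\,\overline{z}\,dx.
\end{align*}
Differentiating \eqref{CASE3} in $x$ gives $i\lambda v_x-z_x=f^{1}_x$, i.e.\ $\overline{z_x}=-i\lambda\overline{v_x}-\overline{f^{1}_x}$; inserting this into the first two integrals turns the leading contribution into $\alpha\int_{0}^{L}|v_x|^{2}\,dx$, and taking moduli I obtain
\begin{align*}
\rho\int_{0}^{L}|z|^{2}\,dx\le{}&\alpha\int_{0}^{L}|v_x|^{2}\,dx+\gamma\int_{0}^{L}|u^{3}||v_x|\,dx\\
&+|\lambda^{-1}|\Bigl(\alpha\int_{0}^{L}|v_x||f^{1}_x|\,dx+\gamma\int_{0}^{L}|u^{3}||f^{1}_x|\,dx+a\int_{0}^{L}|w_x||z|\,dx+\rho\int_{0}^{L}|f^{2}||z|\,dx\Bigr).
\end{align*}

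Next I would estimate the groups of terms on the right. For the cross term, Young's inequality together with Lemma~\ref{LEMMA8} gives $\gamma\int_{0}^{L}|u^{3}||v_x|\le\tfrac{\gamma}{2}\int_{0}^{L}|u^{3}|^{2}+\tfrac{\gamma}{2}\int_{0}^{L}|v_x|^{2}\lesssim\int_{0}^{L}|v_x|^{2}\,dx$. Using the embedding inequalities $\sqrt{\alpha}\|v_x\|\le\|U\|_{\mathcal H}$, $\sqrt{\alpha}\|f^{1}_x\|\le\|F\|_{\mathcal H}$, $\sqrt{\epsilon_{3}}\|u^{3}\|\le\|U\|_{\mathcal H}$, $\sqrt{\rho}\|z\|\le\|U\|_{\mathcal H}$ and $\sqrt{\rho}\|f^{2}\|\le\|F\|_{\mathcal H}$, the two $f^{1}_x$-integrals and the $f^{2}z$-integral are all bounded by $\lesssim|\lambda^{-1}|\,\|U\|_{\mathcal H}\|F\|_{\mathcal H}$, as in \eqref{CAS25}. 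For $a\int_{0}^{L}|w_x||z|$ I would apply Young's inequality with the weight $r=a/\rho$ exactly as in \eqref{1}, so that $a\int_{0}^{L}|w_x||z|\le\tfrac{a^{2}}{2\rho}\int_{0}^{L}|w_x|^{2}+\tfrac{\rho}{2}\int_{0}^{L}|z|^{2}$; Lemma~\ref{CASE10} then controls the first summand by $\tfrac{a^{2}}{2\rho}\mathcal{L}_{1}\|U\|_{\mathcal H}\|F\|_{\mathcal H}$, while the second, after multiplication by $|\lambda^{-1}|\le1$, is reabsorbed into the left-hand side. Collecting everything one arrives at $\tfrac{\rho}{2}\int_{0}^{L}|z|^{2}\,dx\lesssim\int_{0}^{L}|v_x|^{2}\,dx+(1+|\lambda^{-1}|)\|U\|_{\mathcal H}\|F\|_{\mathcal H}$, and since $\|U\|_{\mathcal H}\|F\|_{\mathcal H}\le\mathbb{A}(U,F)\|F\|_{\mathcal H}$ this is exactly the claimed inequality.

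The only points that genuinely require care are the vanishing of the boundary terms in the first step — which uses both $z(0)=0$ and the mixed condition $\alpha v_x(L)+\gamma u^{3}(L)=0$ — and the choice of the Young weight in the $w_x z$-term so that $\int_{0}^{L}|z|^{2}\,dx$ can be reabsorbed; everything else is routine. I would also stress, as in the proof of Theorem~\ref{THEOREM2}, that this estimate is not meant to be used in isolation: it is to be combined with Lemmas~\ref{CASE10}, \ref{CASE14} and \ref{LEMMA8}, where the $\lambda$-growth on the right-hand side of Lemma~\ref{CASE14} is ultimately compensated by the decay $\|F_{n}\|_{\mathcal H}\to0$ along the sequences used in the contradiction arguments for $(\mathrm{\mathbf{C1}})$ and $(\mathrm{\mathbf{C2}})$.
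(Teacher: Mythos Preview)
Your argument is correct. It differs slightly from the paper's proof of this particular lemma: the paper first substitutes $z=i\lambda v-f^{1}$ into \eqref{CASE4} and multiplies by $\overline{v}$, obtaining $\rho\int_{0}^{L}|\lambda v|^{2}\,dx$ on the left; this produces a term $\rho|\lambda|\int_{0}^{L}|f^{1}||v|\,dx$ whose estimate genuinely needs the combination $(\|U\|_{\mathcal H}+\|F\|_{\mathcal H})\|F\|_{\mathcal H}=\mathbb{A}(U,F)\|F\|_{\mathcal H}$, and one must then pass back from $\|\lambda v\|$ to $\|z\|$. You instead multiply by $-\rho i\lambda^{-1}\overline{z}$, exactly as the paper does in Lemmas~\ref{LEMMA3} and~\ref{LEMMA6}, and land directly on $\rho\int_{0}^{L}|z|^{2}\,dx$. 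Your route is in fact a bit cleaner: it gives the sharper inequality $\rho\int_{0}^{L}|z|^{2}\,dx\lesssim\int_{0}^{L}|v_x|^{2}\,dx+(1+|\lambda^{-1}|)\|U\|_{\mathcal H}\|F\|_{\mathcal H}$, from which the stated bound follows immediately via $\|U\|_{\mathcal H}\|F\|_{\mathcal H}\le\mathbb{A}(U,F)\|F\|_{\mathcal H}$, and it avoids the detour through $\|\lambda v\|$. The paper's change of multiplier in this subsection is precisely what makes the quantity $\mathbb{A}(U,F)$ appear in the statement of the lemma.
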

\begin{proof}
Inserting  (\ref{CASE3}) in (\ref{CASE4}), then multiplying by $v$ and integrating by parts over $(0,L)$ to get
\begin{eqnarray*}
\rho\int_{0}^{L}|\lambda v|^{2}dx&=&\alpha \int_{0}^{L}|v_{x}|^{2}dx-\gamma \int_{0}^{L}u^{3}v_{x}dx\\
&&+a \int_{0}^{L}wv_{x}dx
-i\lambda\rho\int_{0}^{L}f^{1}vdx-\rho \int_{0}^{L}f^{2}vdx,
\end{eqnarray*}
then
\begin{eqnarray}\label{CASE26}
\rho\int_{0}^{L}|\lambda v|^{2}dx&\leq&\alpha \int_{0}^{L} |v_{x}|^{2}dx+\gamma \int_{0}^{L}|u^{3}||v_{x}|dx+a \int_{0}^{L}|w||v_{x}|dx \notag\\
&&+|\lambda|\int_{0}^{L}|f^{1}||v|dx+\rho \int_{0}^{L}|f^{2}||v|dx.
\end{eqnarray}
Make use of lemma \ref{CASE10} and  the inequality $ab\leq a^{2}+\dfrac{b^{2}}{4}$, it follows
\begin{eqnarray}\label{CASE23}
a \int_{0}^{L}|w||v_{x}|dx&\leq& \int_{0}^{L}|w|^{2}dx+\dfrac{a^{2}}{4}\int_{0}^{L}|v_{x}|^{2}dx \notag\\
&\leq& \mathcal{L}_{2} \|U\|_{\mathcal{H}}\|F\|_{\mathcal{H}}+\dfrac{a^{2}}{4}\int_{0}^{L}|v_{x}|^{2}dx.
\end{eqnarray}
Using Poincaré's inequality, we can estimate $\|\lambda v\|$ as follow
\begin{equation*}
  \|\lambda v\| \leq \dfrac{1}{\sqrt{\rho}}\|U\|_{\mathcal{H}}+\dfrac{C_{P}}{\sqrt{\alpha}}\|f_{x}^{1}\|
  \leq\max\left(\dfrac{1}{\sqrt{\rho}},\dfrac{C_{P}}{\sqrt{\alpha}}\right)\left(\|U\|_{\mathcal{H}}+
  \|F\|_{\mathcal{H}}\right).
\end{equation*}
On the other side, we have $\rho \int_{0}^{L}|f^{2}|^{2}dx \leq\|F\|^{2}_{\mathcal{H}}$ and $\alpha \int_{0}^{L}|f_{x}^{1}|^{2}dx\leq \|F\|^{2}_{\mathcal{H}}$, then we estimate as follow

\begin{eqnarray}\label{2}
  \rho \int_{0}^{L}|f^{2}||v|dx &\leq&\sqrt{\rho} \|F\|_{\mathcal{H}}\|v\| \notag\\
  &\leq& \dfrac{1}{|\lambda|\sqrt{\rho}}\max\left(\dfrac{1}{\sqrt{\rho}}, \dfrac{C_{P}}{\sqrt{\alpha}}\right)\left(\|F\|_{\mathcal{H}}\|U\|_{\mathcal{H}}+\|F\|_{\mathcal{H}}^{2}\right)
\end{eqnarray}
and

\begin{eqnarray}\label{CASE25}
  \rho |\lambda|\int_{0}^{L}|f^{1}||v|dx &\leq &\rho C_{P} \|f_{x}^{1}\|\|\lambda v\|\notag\\
 &\leq&\rho C_{P}\max\left(\dfrac{1}{\sqrt{\rho}\sqrt{\alpha}},\dfrac{C_{P}}{\alpha}\right)\left(\|U\|_{\mathcal{H}}+\|F\|_{\mathcal{H}}\right)\|F\|_{\mathcal{H}}.
\end{eqnarray}

Using Young's inequality  and lemma \ref{LEMMA8}  to estimate the following 
\begin{eqnarray*}
\gamma \int_{0}^{L}|u^{3}||v_{x}|dx &\leq&  \dfrac{\gamma}{2} \int_{0}^{L}|u^{3}|^{2}dx+\dfrac{\gamma}{2} \int_{0}^{L}|v_{x}|^{2}dx \\
&\leq& \dfrac{\gamma^{3}}{2\epsilon_{3}}\|F\|_{\mathcal{H}} \|U\|_{\mathcal{H}}+\dfrac{\gamma}{2}\int_{0}^{L}|v_{x}|^{2}dx.
\end{eqnarray*}
Then, using the previous inequality, the inequalities (\ref{CASE23}), (\ref{2}) and (\ref{CASE25}) in (\ref{CASE26}) we conclude the result.
\end{proof}
\begin{lem}\label{LEMMA9}
For $m \in (0,1)$, assume that the hypothesis $(\mathrm{\mathbf{H}})$ holds. Then
\begin{eqnarray*}
  \mu\int_{0}^{L}|u^{1}|^{2}dx\lesssim\int_{0}^{L}|v_{x}|^{2}dx+\|U\|_{\mathcal{H}}\|F\|_{\mathcal{H}}.
\end{eqnarray*}
\end{lem}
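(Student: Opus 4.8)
The plan is to mirror the argument of Lemma~\ref{LEMMA4} (equivalently Lemma~\ref{LEMMA7}), exploiting the two equations in which $u^{1}$ and $u^{2}$ are directly coupled, namely (\ref{CASE5}) and (\ref{CASE6}). First I would multiply (\ref{CASE6}) by $\overline{u^{1}}$ and integrate over $(0,L)$, and separately multiply (\ref{CASE5}) by $\overline{u^{2}}$ and integrate over $(0,L)$, integrating by parts in $\int_{0}^{L}u^{3}_{x}\overline{u^{2}}\,dx$ (the boundary terms vanish because $u^{2}(0)=u^{2}(L)=0$). Adding the two resulting identities, the combination $i\lambda\big(\int_{0}^{L}u^{2}\overline{u^{1}}\,dx+\int_{0}^{L}u^{1}\overline{u^{2}}\,dx\big)$ is purely imaginary and hence drops out upon taking real parts, leaving
\[
\frac{\mu}{\xi\epsilon_{3}}\int_{0}^{L}|u^{1}|^{2}\,dx=\int_{0}^{L}|u^{2}|^{2}\,dx-\Re\!\Big(b\!\int_{0}^{L}\!u^{2}\overline{u^{1}}\,dx\Big)+\Re\!\Big(\int_{0}^{L}\!u^{3}\overline{u^{2}_{x}}\,dx\Big)+\Re\!\Big(\int_{0}^{L}\!f^{4}\overline{u^{1}}\,dx\Big)+\Re\!\Big(\int_{0}^{L}\!f^{3}\overline{u^{2}}\,dx\Big).
\]

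I would then estimate the right-hand side term by term. The first term is bounded by Lemma~\ref{CASE10}. For $b\int_{0}^{L}u^{2}\overline{u^{1}}\,dx$ I apply Young's inequality with a parameter $r=\tfrac{b\xi\epsilon_{3}}{\mu}$ so that the arising $\tfrac{b}{2r}\int_{0}^{L}|u^{1}|^{2}\,dx$ equals half of the left-hand coefficient $\tfrac{\mu}{\xi\epsilon_{3}}$ and can be absorbed, while the companion $\tfrac{br}{2}\int_{0}^{L}|u^{2}|^{2}\,dx$ is again handled by Lemma~\ref{CASE10}. The two source terms are bounded by $\lesssim\|U\|_{\mathcal{H}}\|F\|_{\mathcal{H}}$ via Cauchy--Schwarz together with the embeddings $\sqrt{\xi\epsilon_{3}}\|f^{4}\|\le\|F\|_{\mathcal{H}}$, $\sqrt{\mu}\|f^{3}\|\le\|F\|_{\mathcal{H}}$, $\sqrt{\mu}\|u^{1}\|\le\|U\|_{\mathcal{H}}$ and $\sqrt{\xi\epsilon_{3}}\|u^{2}\|\le\|U\|_{\mathcal{H}}$.

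The term that distinguishes this case from Lemma~\ref{LEMMA4} is $\Re\!\big(\int_{0}^{L}u^{3}\overline{u^{2}_{x}}\,dx\big)$. Using the compatibility condition $\xi u^{2}_{x}-u^{3}+\tfrac{\gamma}{\epsilon_{3}}v_{x}=0$ to substitute for $u^{2}_{x}$ and then one more Young's inequality, it is bounded by a constant times $\int_{0}^{L}|u^{3}|^{2}\,dx+\int_{0}^{L}|v_{x}|^{2}\,dx$. Since $c=0$ here, there is no dissipation on $u^{3}$ allowing $\int_{0}^{L}|u^{3}|^{2}\,dx$ to be absorbed into $\|U\|_{\mathcal{H}}\|F\|_{\mathcal{H}}$; instead I invoke Lemma~\ref{LEMMA8}, which gives $\int_{0}^{L}|u^{3}|^{2}\,dx\lesssim\int_{0}^{L}|v_{x}|^{2}\,dx$, so this whole contribution is $\lesssim\int_{0}^{L}|v_{x}|^{2}\,dx$---which is precisely why the statement of the lemma retains an explicit $\int_{0}^{L}|v_{x}|^{2}\,dx$ on the right-hand side. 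Collecting the estimates, moving the absorbed $u^{1}$-term to the left (legitimate since $\tfrac{\mu}{\xi\epsilon_{3}}>0$), and incorporating all constants into the implied constant of $\lesssim$ yields $\mu\int_{0}^{L}|u^{1}|^{2}\,dx\lesssim\int_{0}^{L}|v_{x}|^{2}\,dx+\|U\|_{\mathcal{H}}\|F\|_{\mathcal{H}}$. The only genuine difficulty is bookkeeping: one must make sure every $\int_{0}^{L}|u^{3}|^{2}\,dx$ contribution is routed through Lemma~\ref{LEMMA8} rather than treated as in the damped-$u^{3}$ situation. No idea beyond Lemmas~\ref{LEMMA4} and~\ref{LEMMA8} is needed.
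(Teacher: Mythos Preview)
Your proposal is correct and follows exactly the approach the paper intends: the paper's own proof of this lemma reads in full ``For the proof of this theorem, we proceed same as in lemma~\ref{LEMMA4},'' and you have spelled out precisely those steps, including the one necessary modification---routing the $\int_{0}^{L}|u^{3}|^{2}\,dx$ contribution through Lemma~\ref{LEMMA8} in place of the $c$-damping estimate---that accounts for the surviving $\int_{0}^{L}|v_{x}|^{2}\,dx$ on the right-hand side.
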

\begin{proof}
For the proof of this theorem, we proceed same as in lemma \ref{LEMMA4}.
\end{proof}
\begin{proof}[Proof of theorem \ref{THEOREM3}]
First, we will prove the first condition $(\mathrm{\mathbf{C1}})$ by using a contradiction argument. Suppose that the condition $(\mathrm{\mathbf{C1}})$ doesn't hold, then there exists $l\in \mathbb{R}$ such that $il\notin\rho(\mathcal{A})$, it follows that there exists a sequence $\{(\lambda_{n},U_{n})_{n\geq1}\}\subset \mathbb{R}\times\mathcal{D}(\mathcal{A})$ such that $|\lambda_{n}|<l$,  $|\lambda_{n}|\rightarrow l$ as $n\rightarrow \infty $ with $\|U_{n}\|_{\mathcal{H}}=1$ and 
\begin{equation*}
i\lambda_{n}U_{n}-\mathcal{A}U_{n}=F_{n}\rightarrow 0, \;\; in \;\; \mathcal{H}, \; as \;\; n \rightarrow \infty. 
\end{equation*}
We take $U=U_{n}$, $F=F_{n}$ and $\lambda_{n}=\lambda$. From lemma \ref{CASE10}, one has 
\begin{equation*}
\int_{0}^{L}|w_{x}|^{2}dx\rightarrow 0,\;\;\; \int_{0}^{L}|u^{3}|^{2}dx\rightarrow 0. 
\end{equation*}
Since  $|\lambda_{n}|<l$, $\|U_{n}\|_{\mathcal{H}}=1$ and $\|F_{n}\|_{\mathcal{H}}\rightarrow 0$, then according to lemma \ref{CASE14}, we get
\begin{equation*}
\mathbb{B}(U_{n},F_{n})\rightarrow |l|^{\frac{1}{2}}+1, \;\; as\;\;\; n\rightarrow \infty,
\end{equation*}
it follows that 
\begin{equation}\label{F3}
\int_{0}^{L}|v^{n}_{x}|^{2}dx\rightarrow 0,\;\; as\;\;\; n\rightarrow \infty.
\end{equation}
Using this later  in lemma \ref{LEMMA8},  we obtain 
\begin{equation*}
\int_{0}^{L}|u_{n}^{3}|^{2}dx\rightarrow 0,  \;\; as \;\; n\rightarrow \infty.
\end{equation*}
Now, using (\ref{F3}) and the fact that we have $|\lambda_{n}|<k$, $\|U_{n}\|_{\mathcal{H}}=1$ and $\|F_{n}\|_{\mathcal{H}}\rightarrow 0$, in lemma \ref{CASE15}, we get 
\begin{equation*}
\int_{0}^{L}|z_{n}|^{2}dx\rightarrow 0, \;\; as \;\; n\rightarrow \infty.
\end{equation*}
we finish the proof of $(\mathrm{\mathbf{C1}})$, by showing that 
\begin{equation*}
\int_{0}^{L}|u^{1}|^{2}dx\rightarrow 0, \;\; as\;\; n\rightarrow \infty.
\end{equation*}
using (\ref{F3}) and the fact that we have $\|U_{n}\|_{\mathcal{H}}=1$ and $\|F_{n}\|_{\mathcal{H}}\rightarrow 0$.
thus condition $(\mathrm{\mathbf{C1}})$ is satisfied. 
Then, it suffices to check the second one $(\mathrm{\mathbf{C2}})$. Using a contradiction argument, suppose that there exist a sequence $\{(\lambda_{n}, U_{n})\}_{n\geq1}\subset \mathbb{R}^{\ast}\times\mathcal{D}(\mathcal{A})$ that satisfy $ |\lambda_{n}|\geq1$ with $|\lambda_{n}| \rightarrow \infty$,  such that
\begin{equation*}  
\begin{tabular}{ll}
$\|U_{n}\|_{\mathcal{H}}= 1$, &\\
$i\lambda_{n} U_{n}-\mathcal{A}U_{n} =F_{n}:=
(f^{1}_{n},f^{2}_{n},f^{3}_{n},f^{4}_{n},f^{5}_{n},f^{6}_{n},f^{7}_{n},) \rightarrow 0.$ \;\;\; in \;\; $\mathcal{H}$&
\end{tabular}
\end{equation*}
Using lemma \ref{LEMMA1} and taking into account that we have $|\lambda_{n}| \rightarrow \infty$, $\|U_{n}\|_{\mathcal{H}}= 1$ and $F_{n}\rightarrow 0 $ in $\mathcal{H}$, we get
\begin{equation}
\int_{0}^{L}|w_{n}|^{2}dx=o(1), \;\;\; \int_{0}^{L}|u^{2}_{n}|^{2}dx=o(1),\;\;\;  \int_{0}^{+\infty} \int_{0}^{L}\sigma(s)|\kappa_{x,n}|^{2}dxds=o(1).
\end{equation}
From lemma \ref{LEMMA5} and since  $|\lambda_{n}| \rightarrow \infty$, $\|U_{n}\|_{\mathcal{H}}= 1$ and $F_{n}\rightarrow 0 $ in $\mathcal{H}$, we obtain
\begin{equation*}
\mathbb{B}(U_{n},F_{n})= (|\lambda|^{\frac{1}{2}}+1)\|U_{n}\|^{\frac{1}{2}}_{\mathcal{H}}+ \|F_{n}\|^{\frac{1}{2}}_{\mathcal{H}}=o(|\lambda|^{\frac{1}{2}}),
\end{equation*}
hence
\begin{equation*}
  \int_{0}^{L}|v_{x,n}|^{2}=o(1).
\end{equation*}
From the previous identity, lemma \ref{LEMMA8}, lemma \ref{CASE15} and \ref{LEMMA9}, we arrive to claim that
\begin{equation*}
\int_{0}^{L}|u^{3}_{n}|^{2}dx=o(1),\;\;\; \int_{0}^{L}|z_{n}|^{2}dx=o(1), \;\;\; \int_{0}^{L}|u^{1}_{n}|^{2}dx=o(1),
\end{equation*}
it follows $\|U_{n}\|_{\mathcal{H}}=o(1)$ which is a contradiction with $\|U_{n}\|_{\mathcal{H}}=1$. Thus, the condition $(\mathrm{\mathbf{C2}})$ holds.
\end{proof}
\subsection{The electrical field component in $x$- and $z$- direction are not damped $\mathbf{b=0} $ \textbf{and} $\mathbf{c=0}$}
We give the following result considering a piezoelectric beam with Coleman-Gurtin thermal 
law and the electrical field components are not damped:
\begin{thm}\label{THEOREM4}
For $m\in(0,1)$, under the hypothesis $(\mathrm{\mathbf{H}})$, the $\mathcal{C}_{0}-$semigroup of contractions $(e^{t\mathcal{A}})_{t\geq0}$ is exponentially stable, i.e. there exists $M\geq 1$ and $\epsilon >0$ such  that
\begin{equation*}
  \|e^{t\mathcal{A}}U_{0}\|_{\mathcal{H}}\leq Me^{-\epsilon t} \|U_{0}\|_{\mathcal{H}}, \;\; \mathrm{for} \;\; t \geq 0.
\end{equation*}
\end{thm}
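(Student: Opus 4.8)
The plan is to follow the Huang--Pr\"uss frequency-domain criterion, exactly as for Theorems \ref{THEOREM1}--\ref{THEOREM3}: since $(e^{t\mathcal{A}})_{t\ge 0}$ is a $\mathcal{C}_0$-semigroup of contractions, exponential stability is equivalent to the two conditions $i\mathbb{R}\subset\rho(\mathcal{A})$ $(\mathrm{\mathbf{C1}})$ and $\sup_{\lambda\in\mathbb{R}}\|(i\lambda I-\mathcal{A})^{-1}\|_{\mathcal{L}(\mathcal{H})}=O(1)$ $(\mathrm{\mathbf{C2}})$, and both are established by contradiction: one supposes a sequence $(\lambda_n,U_n)\subset\mathbb{R}\times\mathcal{D}(\mathcal{A})$ with $\|U_n\|_{\mathcal{H}}=1$ and $F_n:=(i\lambda_n I-\mathcal{A})U_n\to 0$ in $\mathcal{H}$ --- with $|\lambda_n|\to l$ finite for $(\mathrm{\mathbf{C1}})$ and $|\lambda_n|\to\infty$ for $(\mathrm{\mathbf{C2}})$ --- and derives $\|U_n\|_{\mathcal{H}}\to 0$. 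The point distinguishing this case from the three previous ones is that, for $b=c=0$, the terms $b\xi\epsilon_3\int_0^L|u^2|^2dx$ and $c\epsilon_3\int_0^L|u^3|^2dx$ are absent from $\Re\langle\mathcal{A}U,U\rangle$, so the only active dissipation is the Coleman--Gurtin/memory term and every remaining component of $U$ has to be controlled by transporting that single dissipation across the couplings of (\ref{C2}).

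First I would take the inner product of the resolvent identity with $U$ and use the Dafermos condition in $(\mathrm{\mathbf{H}})$; as in Lemma \ref{CASE10} this bounds $\int_0^L|w_x|^2dx$, $\int_0^L|w|^2dx$, $\int_0^{+\infty}\!\!\int_0^L\sigma(s)|\kappa_x|^2dxds$ and $\int_0^L|\Lambda_x^m|^2dx$ by $C\|U\|_{\mathcal{H}}\|F\|_{\mathcal{H}}$, the constant staying finite precisely because $m\in(0,1)$ gives $(1-m)d>0$. Next, the identity $i\lambda a v_x=-i\lambda w+d\Lambda^m_{xx}+f^6+af^1_x$ coming from the fifth equation of (\ref{C2}) together with $(\ref{CASE3})$ involves neither $b$ nor $c$, so the argument of Lemma \ref{CASE14}/Lemma \ref{LEMMA1} carries over verbatim and gives an estimate of $\alpha\int_0^L|v_x|^2dx$ of the same mixed shape: $\|U\|_{\mathcal{H}}\|F\|_{\mathcal{H}}$ plus $|\lambda^{-1}|$-weighted cross terms plus a Gagliardo--Nirenberg boundary contribution carrying positive powers of $|\lambda|$. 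At this stage $\|v_x\|$ and $\|w\|$ are small on the contradicting sequences, and $\|z\|$ too, since $z=i\lambda v-f^1$ and $\|v\|\le C_P\|v_x\|$.

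The core of the proof is then to recover the electromagnetic triple $(u^1,u^2,u^3)$, which in the damped cases came for free from $1/b$ or $1/c$. Here one must use that $(\ref{CASE5})$, $(\ref{CASE6})$ (now without $bu^2$) and $(\ref{CASE7})$ (now without $cu^3$), together with the compatibility condition $\xi u^2_x-u^3+\frac{\gamma}{\epsilon_3}v_x=0$ and its $F$-analogue, close the loop only after several combined identities. Concretely, eliminating $u^2$ between $(\ref{CASE5})$ and $(\ref{CASE6})$ gives $(\frac{\mu}{\xi\epsilon_3}-\lambda^2)u^1=f^4+i\lambda f^3-i\lambda u^3_x$; differentiating this and inserting $(\ref{CASE7})$ reduces $u^3$ to a Helmholtz-type boundary value problem $u^3_{xx}+\omega(\lambda)^2u^3=\mathcal{G}$ on $(0,L)$, whose forcing $\mathcal{G}$ is controlled by $\|v_x\|$, $\|v_{xx}\|$ and $\|F\|_{\mathcal{H}}$ (with $\|v_{xx}\|$ itself bounded through $(\ref{CASE4})$ as in Lemma \ref{LEMMA1}), and whose boundary data are constrained by $u^2(0)=u^2(L)=0$ (equivalently $\int_0^L u^3\,dx=\frac{\gamma}{\epsilon_3}v(L)$) and by $\alpha v_x(L)+\gamma u^3(L)=0$. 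Solving this problem and feeding the resulting control of $u^3$ back into the compatibility condition yields $u^2$, and into $(\ref{CASE6})$ yields $u^1$; the velocity estimate $\rho\int_0^L|z|^2dx$ is then closed exactly as in Lemma \ref{CASE15}/Lemma \ref{LEMMA3} by multiplying $(\ref{CASE4})$ by $-\rho i\lambda^{-1}\bar z$ and inserting $(\ref{CASE3})$.

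Chaining $w,\kappa\rightarrow v_x\rightarrow(u^1,u^2,u^3)\rightarrow z$ gives $\|U\|_{\mathcal{H}}^2\lesssim\|U\|_{\mathcal{H}}\|F\|_{\mathcal{H}}+\mathcal{R}$, where $\mathcal{R}$ collects the $|\lambda|$-dependent remainders; on the contradicting sequences $\|U_n\|_{\mathcal{H}}=1$, $\|F_n\|_{\mathcal{H}}\to 0$, and the Gagliardo--Nirenberg exponents are chosen so that $\mathcal{R}$ is $o(1)$ both for bounded $|\lambda_n|$ and, after dividing by the appropriate power of $|\lambda_n|$, for $|\lambda_n|\to\infty$, whence $\|U_n\|_{\mathcal{H}}\to 0$ in both regimes --- a contradiction --- and Huang--Pr\"uss yields the exponential decay. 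I expect the genuine obstacle to be the third step: with no electromagnetic damping the estimates on $u^1,u^2,u^3$ are mutually circular if one only chains one-line multiplier inequalities, so one must exploit the structure of the Helmholtz reduction --- in particular ruling out the resonant values of $\omega(\lambda)^2$ lying in the spectrum of $-\partial_{xx}$ under the relevant boundary conditions, where coupling through $v_x$ alone would not control $u^3$, and verifying that there the compatibility constraints together with the memory-induced smallness of $\|v_x\|$ still force $u^3\to 0$.
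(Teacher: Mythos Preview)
Your overall frequency-domain strategy, the dissipation estimates on $w,\kappa,\Lambda^m$, and the $v_x$-estimate via the thermal equation all coincide with what the paper does. The divergence is entirely in how the electromagnetic triple $(u^1,u^2,u^3)$ is handled, and there your route is genuinely different from the paper's.

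The paper does \emph{not} set up any Helmholtz reduction. It simply reuses the one-line multiplier identities of the previous subsections verbatim: it claims ``proceed as in Lemma~\ref{LEMMA1}, Lemma~\ref{LEMMA8} and Lemma~\ref{CASE15}'' to obtain $\|v_x\|,\|u^3\|,\|z\|=o(1)$, then feeds $\|u^3\|$ and $\|v_x\|$ into the compatibility identity (as in Lemma~\ref{LEMMA5}) to get $\|u^2_x\|=o(1)$, applies Poincar\'e for $\|u^2\|$, and finishes with Lemma~\ref{LEMMA4} for $\|u^1\|$. So in the paper the chain is $v_x\rightarrow u^3\rightarrow (z,u^2,u^1)$, all by the same short multiplier computations already written out for $b\neq0$ or $c\neq0$.

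Your instinct that this step hides the real difficulty is correct. Lemma~\ref{LEMMA8}'s derivation produces the identity $b\,\epsilon_3\int|u^3|^2=\gamma b\int v_x\bar u^3$ and then divides by $b$; for $b=0$ the identity is $0=0$ and yields nothing. (Indeed, combining the differentiated $u^2$-equation, the $u^3$-equation and the two compatibility conditions at $b=c=0$ collapses to a tautology.) Likewise the paper's displayed inequality for $u^2_x$ carries the constant $\mathcal{M}_3=1/(c\epsilon_3)$, which is meaningless at $c=0$ unless $\|u^3\|$ has already been controlled by other means. The paper does not address this; it asserts the conclusions. So the obstacle you flagged --- that without $b$ or $c$ the electromagnetic estimates are no longer algebraically immediate --- is exactly the point the paper glosses over, and your Helmholtz reduction is a more honest attempt to close it. What the paper's route buys, when it works, is brevity: no second-order boundary-value problem, no resonance analysis. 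What your route buys is an explicit mechanism (the over-determined boundary data $u^2(0)=u^2(L)=0$ together with $\alpha v_x(L)+\gamma u^3(L)=0$) that can actually rule out the resonant modes you worry about.

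One small correction: the remark ``$\|z\|$ too, since $z=i\lambda v-f^1$ and $\|v\|\le C_P\|v_x\|$'' only gives $\|z\|\le |\lambda|C_P\|v_x\|+\|f^1\|$, which is fine for bounded $|\lambda|$ but not for $|\lambda|\to\infty$ because the $v_x$-estimate is merely $o(1)$, not $o(|\lambda|^{-1})$. You need the multiplier identity of Lemma~\ref{CASE15}/\ref{LEMMA3} (which you do invoke later) in both regimes; there the $\gamma\int|u^3||v_x|$ term is harmless since $\|u^3\|\le\epsilon_3^{-1/2}\|U\|_{\mathcal H}=O(1)$ and $\|v_x\|=o(1)$, so $z$ can in fact be controlled \emph{before} $u^3$.
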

Following Hung and Pr\"{u}ss in \cite{17,23}, we have to check if the following conditions hold:
\begin{equation*}
i\mathbb{R}\subset \rho(\mathcal{A}) \;\;\;\;\;\;\;\;\;\;\;\;\;\;\;\;\;\; \;\;\;\;\;\;\;\;\;\;\;\;\;\;\;\;\;\;\;\;\;\;\;\;\;\;\;\;\;\;(\mathrm{\mathbf{C1}})
\end{equation*}
and
\begin{equation*}
\sup_{\lambda \in \mathbb{R}}\|(i\lambda I-\mathcal{A})^{-1}\|_{\mathcal{L}(\mathcal{H})}=o(1). \;\;\;\;\;\;\;\;\;\;\;\;\;\;\;\;\;\; (\mathrm{\mathbf{C2}})
\end{equation*}
\begin{proof}
We start with the second condition $(\mathrm{\mathbf{C2}})$. Using a contradiction argument, suppose that there exist a sequence $\{(\lambda_{n}, U_{n})\}_{n\geq1}\subset \mathbb{R}^{\ast}\times\mathcal{D}(\mathcal{A})$ , such that
with $ |\lambda_{n}|\rightarrow \infty$,  such that
\begin{equation*} 
\begin{tabular}{ll}
$\|U_{n}\|_{\mathcal{H}}= 1$, &\\
$i\lambda U_{n}-\mathcal{A}U_{n} =F_{n}:=
(f^{1}_{n},f^{2}_{n},f^{3}_{n},f^{4}_{n},f^{5}_{n},f^{6}_{n},f^{7}_{n},)^\top \rightarrow 0.$\;\;\; in \;\; $\mathcal{H}$. &
\end{tabular}
\end{equation*}
For clarity we drop the index $n$. Since for $U\in \mathcal{D}(\mathcal{A})$ and  for $F \in \mathcal{H}$,  we have
 \begin{eqnarray*}
(1-m)d\int_{0}^{L}|w_{x}|^{2}dx \notag\\
-\frac{md}{2}\int_{0}^{L}\int_{0}^{\infty}\sigma^{\prime}(s)|\kappa_{x}|^{2}dsdx=-\Re(\mathcal{A}U,U)_{\mathcal{H}}&\leq& \|F\|_{\mathcal{H}}\|U\|_{\mathcal{H}}.
\end{eqnarray*}
Then from this later and Poincaré's inequality, we obtain
\begin{equation*}
\int_{0}^{L}|w_{x}|^{2}dx=o(1),\;\;\;  \int_{0}^{L}|w|^{2}dx=o(1),\;\;\; \int_{0}^{L}\int_{0}^{\infty}\sigma^{\prime}(s)|\kappa_{x}|^{2}dsdx=o(1). \label{CASE27}
\end{equation*}
Then, using the hypothesis $(\mathcal{\mathbf{H}})$ and the proceeding of lemma \ref{CA10} to conclude
\begin{eqnarray}
  \int_{0}^{+\infty} \int_{0}^{L}\sigma(s)|\kappa_{x}|^{2}dxds&=& o(1),\;\;\;
  \int_{0}^{L}|\Lambda_{x}^{m}|^{2}dx = o(1).\label{CASE27}
\end{eqnarray}
We proceed as in lemma \ref{LEMMA1}, lemma \ref{LEMMA8} and lemma \ref{CASE15} to get the following estimation, taking into account $|\lambda_{n}|\rightarrow \infty$ and $\mathcal{F}_{n}\rightarrow 0$ in the Hilbert space $\mathcal{H}$
\begin{equation}\label{CASE28}
\int_{0}^{L}|v_{x}|^{2}dx=o(1), \;\; \; \int_{0}^{L}|u^{3}|^{2}dx=o(1),\;\;\; \int_{0}^{L}|z|^{2}dx=o(1).
\end{equation}
Now,  we want to estimate $\int_{0}^{L}|u^{2}|^{2}dx$, we claim as in lemma \ref{LEMMA5} to  get
\begin{equation*}
   \xi \epsilon_{3}\int_{0}^{L}|u^{2}_{x}|^{2}dx\leq \dfrac{\gamma^{2}(\epsilon_{3}+1)}{\xi \epsilon_{3}} \int_{0}^{L}|v_{x}|^{2}dx
   +\dfrac{\epsilon_{3}+1}{2\xi}\mathcal{M}_{3}\|U\|_{\mathcal{H}}\|F\|_{\mathcal{H}}.
\end{equation*}
Then using Poincaré's inequality, the identities in (\ref{CASE28}) to the above inequality taking account that we have $\mathcal{F} \rightarrow 0$ in $\mathcal{H}$ to write
\begin{equation*}
\int_{0}^{L}|u^{2}|^{2}dx=o(1).
\end{equation*}
Similarly as in lemma \ref{LEMMA4}, we show that $\|u^{1}\|=o(1)$. We proceed similarly by using a contradiction argument as in before, to conclude easily the condition $(\mathrm{\mathbf{C1}})$. Hence, the proof is completed.
\end{proof}
\begin{rem}
Using the same justifications as in the preceding cases, we built up an exponential stability result for each of the cases of the piezoelectric Fourier law (that is the case when  $m = 0$).  
\end{rem}
\section{Polynomial stability result for piezoelectric beam with Gurtin-Pipkin thermal law }
In this section, we will consider the asymptotic behavior of solution for the piezoelectric beam  subjected 
to Gurtin-Pipkin thermal law $m=1$ and the electric field components are damped, using Borichev and Tomilov theorem (\cite{8}), we prove a polynomial decay  of solution of rate $t^{-1}$.  \\
First, for simplification we denote by $\mathcal{B}$ the unbounded linear operator in case of $m=1$ instead of $\mathcal{A}$. then we define in the Hilbert space $\mathcal{H}$, the operator $\mathcal{B}$ by
\begin{eqnarray*}
\mathcal{B}U=
 \left [\begin{array}{c}
  z\\
\frac{\alpha}{\rho}v_{xx}+\frac{\gamma}{\rho}u_{x}^{3}-\frac{a}{\rho}w_{x}\\
 u^{2}-u_{x}^{3}\\
   -\frac{\mu }{\xi \epsilon_{3}}u^{1}-bu^{2}\\
    -\frac{\mu}{\epsilon_{3}}u^{1}_{x}+\frac{\gamma}{\epsilon_{3}}z_{x}-cu^{3}\\
    d\int_{0}^{\infty}\sigma(s)\kappa_{xx}(s)ds-az_{x}\\
    -\kappa_{s}+w
\end{array}
\right ]
\end{eqnarray*}
in a domain $\mathcal{D}(\mathcal{B})$ defined as,
\begin{eqnarray*}
\mathcal{D}(\mathcal{B})&:=& \{ U=(v,z,u^{1},u^{2},u^{3},w,k)^\top\in \mathcal{H} \\ \notag
&& z\in H^{1}_{L}(0,L),\;\; v \in H^{2}(0,L)\cap H^{1}_{L}(0,L),\;\; u^{1},u^{2}\in H^{1}_{0}(0,L), \\ \notag
&&u^{3}\in H^{1}(0,L),\;\;w \in H^{1}_{0}(0,L),\;\;  \alpha v_{x}(L)+\gamma u^{3}(L)=0,\\ \notag
&&\int_{0}^{\infty}\sigma(s)\kappa_{x}(s)ds\in H^{1}(0,L), \; \kappa_{s}\in W, \; \kappa(.,0)=0\}
\end{eqnarray*}
Then, we give the polynomial stability result of the problem (\ref{C2}) in case $m=1$, in the following theorem
\begin{thm}\label{THEOREM5}
For $m=1$, assume that the hypothesis $(\mathrm{\mathbf{H}})$ holds, the $\mathcal{C}_{0}-$semigroup of contractions $(e^{t\mathcal{B}})_{t\geq0}$ is polynomially stable, i.e. there exists $M\geq 0$  such  that
\begin{equation*}
  \|e^{t\mathcal{B}}U_{0}\|_{\mathcal{H}}\leq \dfrac{M}{\sqrt{t}} \|U_{0}\|_{\mathcal{H}}, \;\; \mathrm{for} \;\; t \geq 0.
\end{equation*}
  \end{thm}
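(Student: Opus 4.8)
The plan is to apply the Borichev--Tomilov theorem (\cite{8}). Since $\mathcal{B}$ (the operator $\mathcal{A}$ of Proposition~\ref{PROPO1} taken at $m=1$) generates a bounded $\mathcal{C}_{0}$-semigroup, a decay of order $t^{-1/2}$ on $\mathcal{D}(\mathcal{B})$ is equivalent to
\begin{equation*}
i\mathbb{R}\subset\rho(\mathcal{B})\qquad\text{and}\qquad\limsup_{|\lambda|\to\infty}\frac{1}{\lambda^{2}}\,\|(i\lambda I-\mathcal{B})^{-1}\|_{\mathcal{L}(\mathcal{H})}<\infty .
\end{equation*}
The first is standard: $0\in\rho(\mathcal{B})$ by the surjectivity of $-\mathcal{B}$ from the proof of Proposition~\ref{PROPO1} (the case $(b,c)\neq(0,0)$ with $m=1$, valid since $\tilde{m}=d\int_{0}^{\infty}s\sigma(s)\,ds>0$) together with the injectivity that cascades from $\Re\langle\mathcal{B}U,U\rangle_{\mathcal{H}}=0$, and $i\lambda\in\rho(\mathcal{B})$ for $\lambda\neq0$ by a contradiction argument as for $(\mathrm{\mathbf{C1}})$ in the proof of Theorem~\ref{THEOREM1}, using the resolvent estimates below whose constants are finite for each fixed $\lambda\neq0$ (the boundedness of $\lambda$ makes the loss of powers of $\lambda$ harmless there).

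For the resolvent bound I argue by contradiction: assume there are $\lambda_{n}\in\mathbb{R}$ with $|\lambda_{n}|\to\infty$ and $U_{n}\in\mathcal{D}(\mathcal{B})$ with $\|U_{n}\|_{\mathcal{H}}=1$ and $\lambda_{n}^{2}(i\lambda_{n}I-\mathcal{B})U_{n}\to0$ in $\mathcal{H}$; equivalently $(i\lambda_{n}I-\mathcal{B})U_{n}=\widetilde{F}_{n}$ with $\|\widetilde{F}_{n}\|_{\mathcal{H}}=o(|\lambda_{n}|^{-2})$. Dropping the index $n$ and writing the resolvent system (the analogue of \eqref{CA3}--\eqref{CA9}, with sixth equation $i\lambda w-d\int_{0}^{\infty}\sigma(s)\kappa_{xx}(s)\,ds+az_{x}=f^{6}$), the inner product with $U$ gives, as in Lemma~\ref{CA10},
\begin{equation*}
b\xi\epsilon_{3}\int_{0}^{L}|u^{2}|^{2}dx+c\epsilon_{3}\int_{0}^{L}|u^{3}|^{2}dx+\frac{dd_{\sigma}}{2}\int_{0}^{L}\!\!\int_{0}^{\infty}\sigma(s)|\kappa_{x}|^{2}ds\,dx\le\|U\|_{\mathcal{H}}\|\widetilde{F}\|_{\mathcal{H}},
\end{equation*}
so (recall $b,c>0$) $\int_{0}^{L}|u^{2}|^{2}dx$, $\int_{0}^{L}|u^{3}|^{2}dx$ and $\int_{0}^{L}\int_{0}^{\infty}\sigma(s)|\kappa_{x}|^{2}ds\,dx$ are all $o(|\lambda|^{-2})$.

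The crucial step is to recover $\int_{0}^{L}|w_{x}|^{2}dx$ from the memory dissipation, which replaces the term $(1-m)d\int_{0}^{L}|w_{x}|^{2}dx$ present in $\Re\langle\mathcal{A}U,U\rangle_{\mathcal{H}}$ but absent at $m=1$. Solving the transport equation $i\lambda\kappa+\kappa_{s}-w=f^{7}$, $\kappa(\cdot,0)=0$, in the variable $s$ yields $\kappa_{x}(x,s)=w_{x}(x)\dfrac{1-e^{-i\lambda s}}{i\lambda}+\psi_{x}(x,s)$, where $\psi_{x}$ solves $\psi_{xs}+i\lambda\psi_{x}=f^{7}_{x}$, $\psi_{x}(\cdot,0)=0$; multiplying this by $\sigma(s)\overline{\psi_{x}}$, integrating over $(0,L)\times(0,\infty)$ and using the Dafermos condition (the boundary term at $s=\infty$ vanishing by the decay encoded in $W$) gives $\int_{0}^{L}\int_{0}^{\infty}\sigma(s)|\psi_{x}|^{2}ds\,dx\le\frac{4}{dd_{\sigma}^{2}}\|\widetilde{F}\|_{\mathcal{H}}^{2}$. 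Since
\begin{equation*}
\int_{0}^{\infty}\sigma(s)\,\Bigl|\tfrac{1-e^{-i\lambda s}}{i\lambda}\Bigr|^{2}ds=\frac{2}{\lambda^{2}}\Bigl(g(0)-\int_{0}^{\infty}\sigma(s)\cos(\lambda s)\,ds\Bigr)\ge\frac{g(0)}{\lambda^{2}}\quad\text{for }|\lambda|\text{ large}
\end{equation*}
(the inequality because $\bigl|\int_{0}^{\infty}\sigma(s)\cos(\lambda s)\,ds\bigr|\le\sigma(0)/|\lambda|$ after an integration by parts), a Cauchy--Schwarz splitting of $\int_{0}^{\infty}\sigma(s)|\kappa_{x}|^{2}ds$ yields
\begin{equation*}
\frac{g(0)}{2\lambda^{2}}\int_{0}^{L}|w_{x}|^{2}dx\le\int_{0}^{L}\!\!\int_{0}^{\infty}\sigma(s)|\kappa_{x}|^{2}ds\,dx+\int_{0}^{L}\!\!\int_{0}^{\infty}\sigma(s)|\psi_{x}|^{2}ds\,dx .
\end{equation*}
Both terms on the right are $o(|\lambda|^{-2})$, so $\int_{0}^{L}|w_{x}|^{2}dx\le\frac{2\lambda^{2}}{g(0)}\,o(|\lambda|^{-2})=o(1)$ and, by Poincaré, $\int_{0}^{L}|w|^{2}dx=o(1)$.

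It then remains to run the cascade of Section~3, whose other lemmas have constants that do not degenerate at $m=1$. Lemma~\ref{CA11} (its proof uses only the compatibility condition together with \eqref{CA3}, \eqref{CA6}, \eqref{CA7} and involves neither $w$ nor $\kappa$) gives $\alpha\int_{0}^{L}|v_{x}|^{2}dx\lesssim\int_{0}^{L}|u^{3}|^{2}dx=o(|\lambda|^{-2})$; the argument of Lemma~\ref{LEMMA3} then gives $\rho\int_{0}^{L}|z|^{2}dx=o(1)$, the only term needing re-examination being $a|\lambda|^{-1}\int_{0}^{L}|w_{x}||z|\,dx\le a|\lambda|^{-1}\|w_{x}\|\,\|z\|=o(|\lambda|^{-1})$; and Lemma~\ref{LEMMA4} (again $w$-free) gives $\mu\int_{0}^{L}|u^{1}|^{2}dx=o(|\lambda|^{-2})$. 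Summing, $\|U_{n}\|_{\mathcal{H}}\to0$, contradicting $\|U_{n}\|_{\mathcal{H}}=1$; this establishes the resolvent bound, and Borichev--Tomilov delivers the stated $t^{-1/2}$ decay. The main obstacle is precisely the coercivity step above: passing from the memory dissipation to a bound on $w_{x}$ costs a factor $\lambda^{2}$, because the symbol $\int_{0}^{\infty}\sigma(s)|(1-e^{-i\lambda s})/(i\lambda)|^{2}ds$ decays like $\lambda^{-2}$ at high frequencies; this is exactly what degrades the exponential stability of Section~3 into the present polynomial rate and fixes the $\lambda^{2}$ growth of the resolvent.
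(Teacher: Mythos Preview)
Your proof is correct and follows the same overall architecture as the paper: Borichev--Tomilov, dissipation to control $u^{2},u^{3},\kappa$, recovery of $w_{x}$ from the memory, and then the cascade of Lemmas~\ref{CA11}, \ref{LEMMA3}, \ref{LEMMA4} (which the paper restates as Lemmas~\ref{LEMMA12} and~\ref{LEMMA11}) to reach the contradiction. The one substantive difference is the $w_{x}$ step. The paper simply cites Lemma~4.9 of \cite{2} and records $\int_{0}^{L}|w_{x}|^{2}dx=o(\lambda^{-2})$, while you supply a self-contained argument via the explicit solution $\kappa_{x}=w_{x}\,(1-e^{-i\lambda s})/(i\lambda)+\psi_{x}$ of the transport equation, together with the high-frequency lower bound $\int_{0}^{\infty}\sigma(s)|(1-e^{-i\lambda s})/(i\lambda)|^{2}ds\ge g(0)/\lambda^{2}$, to obtain $\int_{0}^{L}|w_{x}|^{2}dx=o(1)$. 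Your bound is weaker than the one the paper asserts, but it is exactly what the downstream estimates need (indeed the paper itself only uses $a|\lambda^{-1}|\int|w_{x}||z|\,dx=o(|\lambda|^{-1})$ in Lemma~\ref{LEMMA11}), and your derivation makes transparent \emph{why} the rate is $t^{-1/2}$: the symbol costs precisely a factor $\lambda^{2}$, which is the point you emphasize at the end. Your remark that Lemmas~\ref{CA11} and~\ref{LEMMA4} are $w$-free and hence carry over verbatim at $m=1$, and that only the $w_{x}$ term in Lemma~\ref{LEMMA3} needs the modified bound, is also the paper's implicit reasoning.
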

Following Borichev and Tomilov (\cite{8}), then we have to check  the following conditions :
\begin{equation*}
i\mathbb{R}\subset \rho(\mathcal{B}) \;\;\;\;\;\;\;\;\;\;\;\;\;\;\;\;\;\; \;\;\;\;\;\;\;\;\;\;\;\;\;\;\;\;\;\;\;\;\;\;\;\;\;\;\;\;\;\;(\mathrm{\mathbf{P1}})
\end{equation*}
and
\begin{equation*}
\limsup_{|\lambda| \rightarrow \infty}\dfrac{1}{|\lambda|^{2}}\|(i\lambda I-\mathcal{B})^{-1}\|_{\mathcal{L}(\mathcal{H})}<\infty. \;\;\;\;\;\;\;\;\;\;\;\;\;\;\;\;\;\; (\mathrm{\mathbf{P2}})
\end{equation*}
To prove $(\mathrm{\mathbf{P1}})$, we use contradiction argument. Indeed, suppose that $(\mathrm{\mathbf{P1}})$ is false.  Now, since we have proved that $0 \in \rho(\mathcal{B})$ in Proposition \ref{PROPO1},  then there exist $l\in \mathbb{R}$ such that $il\notin \rho(\mathcal{B})$, using remark A.3 in appendix A (See \cite{2}), it turns out that there exists a sequence $\{(\lambda_{n},U_{n})\}_{n\geq1}\subset \mathbb{R}\times \mathcal{D}(\mathcal{A})$ such that $|\lambda_{n}|\rightarrow l$  as $n\rightarrow 0$ and $|\lambda_{n}|<|l|$ with $\|U_{n}\|_{\mathcal{H}}=1$ and 
\begin{equation}\label{F1}
i\lambda_{n}U_{n}-\mathcal{B}U_{n}=F_{n}\rightarrow 0, \;\;\; in \;\; \mathcal{H},\;\;  as\;\;  n\rightarrow 0.
\end{equation} 
To get a contradiction with $\|U_{n}\|_{\mathcal{H}}=1$, we will show that $\|U_{n}\|_{\mathcal{H}}\rightarrow 0$. For simplification we drop the index n. \\
\textbf{Step.1:} First, taking the inner product of (\ref{F1}) with $U$ in $\mathcal{H}$, we get
\begin{eqnarray*}
b\xi \epsilon_{3}\int_{0}^{L}|u^{2}|^{2}dx
+c\epsilon_{3}\int_{0}^{L}|u^{3}|^{2}dx \notag\\
-\frac{d}{2}\int_{0}^{L}\int_{0}^{\infty}\sigma^{\prime}(s)|\kappa_{x}|^{2}dsdx=-\Re(\mathcal{B}U,U)_{\mathcal{H}}&\leq& \|F\|_{\mathcal{H}}\|U\|_{\mathcal{H}}\rightarrow 0.
\end{eqnarray*}
It follows
\begin{equation}\label{F2}
\xi \epsilon_{3}\int_{0}^{L}|u^{2}|^{2}dx \rightarrow 0,\;\;\; \epsilon_{3}\int_{0}^{L}|u^{3}|^{2}dx\rightarrow 0.
\end{equation}
and 
\begin{equation*}
\int_{0}^{L}\int_{0}^{\infty}\sigma^{\prime}(s)|\kappa_{x}|^{2}dsdx \rightarrow 0.
\end{equation*}
Using the hypothesis $(\mathrm{\mathbf{H}})$ in this later, we obtain 
\begin{equation*}
\int_{0}^{L}\int_{0}^{\infty}\sigma(s)|\kappa_{x}|^{2}dsdx \rightarrow 0.
\end{equation*}
Proceeding similarly as in lemma 4.4 see \cite{2}, we get 
\begin{equation*}
\int_{0}^{L}|w_{x}|^{2}\rightarrow 0,\;\;\;\; \int_{0}^{L}|w|^{2}\rightarrow 0.
\end{equation*}
Now, we proceed as similar as lemma \ref{CA11}, lemma \ref{LEMMA3} and  lemma \ref{LEMMA4}, we obtain 
\begin{equation*}
\alpha \int_{0}^{L}|v_{x}|^{2}dx \rightarrow 0,\;\;\; \rho \int_{0}^{L}|z|^{2}dx \rightarrow 0,\;\;\; \mu \int_{0}^{L}|u^{1}|^{2}dx \rightarrow 0.
\end{equation*}
Hence, a contradiction follows and the condition ($\mathrm{\mathbf{P1}}$) is satisfied.\\
By contradiction argument, we will prove the second condition $(\mathrm{\mathbf{P2}})$. Assume that the condition $(\mathrm{\mathbf{P2}})$ does not hold, then there exist a sequence  $(\lambda_{n},U_{n}=(v_{n},z_{n},u^{1}_{n},u^{2}_{n},u^{3}_{n},w_{n},\kappa_{n}))\in \mathbb{R}^{\ast}\times\mathcal{D}(\mathcal{B})$ such that $\|U_{n}\|=1$ and $|\lambda_{n}|\rightarrow \infty$ that satisfy the following
  \begin{equation}\label{SE15}
  \lambda^{2}_{n}(i\lambda_{n} I- \mathcal{B})U_{n}=F_{n}
  :=(f_{n}^{1},f_{n}^{2},f_{n}^{3},f_{n}^{4},f_{n}^{5},f_{n}^{6},f_{n}^{7}(\cdot,s))^{T}\rightarrow 0 \;\;\; \in \mathcal{H}.
  \end{equation}
  detailed as (For sake of clarity we drop the index $n$)
  \begin{eqnarray}
    i\lambda v-z &=& \lambda^{-2}f^{1}, \label{SEC3} \\
    i\lambda z-\frac{\alpha}{\rho}v_{xx}-\frac{\gamma}{\rho}u^{3}_{x}+\frac{a}{\rho}w_{x} &=& \lambda^{-2}f^{2},\label{SEC4} \\
    i\lambda u^{1}-u^{2}+u^{3}_{x} &=&\lambda^{-2} f^{3},\label{SEC5} \\
    i\lambda u^{2}+\frac{\mu}{\xi \epsilon_{3}}u^{1}+bu^{2} &=& \lambda^{-2}f^{4}, \label{SEC6}\\
    i\lambda u^{3}+\frac{\mu}{\epsilon_{3}}u^{1}_{x}-\frac{\gamma}{\epsilon_{3}}z_{x}+cu^{3}&=& \lambda^{-2}f^{5}, \label{SEC7}\\
    i\lambda w-\lambda \int_{0}^{\infty}\sigma(s)\kappa_{xx}(s)ds+az_{x} &=&\lambda^{-2} f^{6}, \label{SEC8}\\
    i\lambda k+k_{s}-w &=& \lambda^{-2}f^{7}.\label{SEC9}
\end{eqnarray}
 We present the proof of ($\mathrm{\mathbf{P2}}$) through the use of several lemmas given as follows \\
\begin{lem}\label{LEMMA10}
For $m=1$, assume that the hypothesis $(\mathrm{\mathbf{H}})$ holds, then the solution $U=(v,z,u^{1},u^{2},u^{3},w,\kappa)\in \mathcal{D}(\mathcal{A})$ satisfies
\begin{equation*}
\int_{0}^{L}|u^{2}|^{2}dx=o(\lambda^{-2}), \;\;\;  \int_{0}^{L}|u^{3}|^{2}dx=o(\lambda^{-2})
\end{equation*}
and
\begin{equation*}
\int_{0}^{\infty}\int_{0}^{L}\sigma^{\prime}(s)|\kappa_{x}|^{2}dxds=o(\lambda^{-2}),\;\;\; \int_{0}^{\infty}\int_{0}^{L}\sigma(s)|\kappa_{x}|^{2}dxds=o(\lambda^{-2}).
\end{equation*}
\end{lem}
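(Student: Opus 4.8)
The plan is to obtain all four estimates directly from the dissipation identity for $\mathcal{B}$, applied to the rescaled resolvent equation (\ref{SE15}). First I would rewrite (\ref{SE15}) as $(i\lambda I-\mathcal{B})U=\lambda^{-2}F$, take the $\mathcal{H}$-inner product with $U$, and pass to real parts. The term $i\lambda\|U\|_{\mathcal{H}}^{2}$ is purely imaginary and drops out, so, using $\|U\|_{\mathcal{H}}=1$ and Cauchy--Schwarz,
\[
  -\Re\langle\mathcal{B}U,U\rangle_{\mathcal{H}}=\lambda^{-2}\Re\langle F,U\rangle_{\mathcal{H}}\le\lambda^{-2}\|F\|_{\mathcal{H}}.
\]

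Next I would evaluate $-\Re\langle\mathcal{B}U,U\rangle_{\mathcal{H}}$ exactly as in Proposition \ref{PROPO1}, now with $m=1$, so that the term $(m-1)d\int_{0}^{L}|w_{x}|^{2}dx$ disappears — this is the same computation already carried out in Step 1 of the proof of $(\mathrm{\mathbf{P1}})$ — and arrive at
\[
  b\xi\epsilon_{3}\int_{0}^{L}|u^{2}|^{2}dx+c\epsilon_{3}\int_{0}^{L}|u^{3}|^{2}dx-\frac{d}{2}\int_{0}^{\infty}\int_{0}^{L}\sigma^{\prime}(s)|\kappa_{x}|^{2}dxds\le\lambda^{-2}\|F\|_{\mathcal{H}}.
\]
By the Dafermos condition in $(\mathrm{\mathbf{H}})$ one has $-\sigma^{\prime}(s)\ge d_{\sigma}\sigma(s)\ge0$, hence all three summands on the left are non-negative, so each is bounded by $\lambda^{-2}\|F\|_{\mathcal{H}}$. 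Since $b,c>0$ in the damped case treated here and $\|F\|_{\mathcal{H}}\to0$, dividing by $b\xi\epsilon_{3}$, by $c\epsilon_{3}$ and by $d/2$ respectively yields $\int_{0}^{L}|u^{2}|^{2}dx=o(\lambda^{-2})$, $\int_{0}^{L}|u^{3}|^{2}dx=o(\lambda^{-2})$ and $\int_{0}^{\infty}\int_{0}^{L}\sigma^{\prime}(s)|\kappa_{x}|^{2}dxds=o(\lambda^{-2})$. For the remaining estimate I would invoke the Dafermos inequality once more, namely $\frac{dd_{\sigma}}{2}\int_{0}^{\infty}\int_{0}^{L}\sigma(s)|\kappa_{x}|^{2}dxds\le-\frac{d}{2}\int_{0}^{\infty}\int_{0}^{L}\sigma^{\prime}(s)|\kappa_{x}|^{2}dxds\le\lambda^{-2}\|F\|_{\mathcal{H}}$, which forces $\int_{0}^{\infty}\int_{0}^{L}\sigma(s)|\kappa_{x}|^{2}dxds=o(\lambda^{-2})$.

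I do not expect a real obstacle in this lemma: it is merely the $\lambda^{2}$-rescaled analogue of Lemma \ref{CA10} and of the opening step of the $(\mathrm{\mathbf{P1}})$ argument. The only point that needs care is to keep the factor $\lambda^{-2}$ through the Cauchy--Schwarz bound, so that the conclusions come out as $o(\lambda^{-2})$ rather than merely $O(\lambda^{-2})$; this is exactly where the normalization $\|U\|_{\mathcal{H}}=1$ together with $F\to0$ in $\mathcal{H}$ enters. The genuinely delicate estimates — the bounds for $\int_{0}^{L}|v_{x}|^{2}dx$, $\int_{0}^{L}|z|^{2}dx$, $\int_{0}^{L}|u^{1}|^{2}dx$ and $\int_{0}^{L}|w|^{2}dx$, which cannot be $o(\lambda^{-2})$ and must instead be controlled by $\|U\|_{\mathcal{H}}\|F\|_{\mathcal{H}}$ or by $O(1)$ quantities — are deferred to the lemmas that follow.
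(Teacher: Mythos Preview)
Your proposal is correct and follows essentially the same approach as the paper's proof: take the inner product of the rescaled resolvent equation with $U$, extract the real part to obtain the dissipation identity with right-hand side bounded by $\lambda^{-2}\|F\|_{\mathcal{H}}\|U\|_{\mathcal{H}}$, and then use the Dafermos condition from $(\mathrm{\mathbf{H}})$ to pass from the $\sigma'$-weighted integral to the $\sigma$-weighted one. Your write-up is in fact slightly more careful than the paper's, which contains a typo ($|\lambda^{-1}|$ in place of $|\lambda^{-2}|$) in the intermediate inequality.
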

\begin{proof}
Taking the inner product  of (\ref{SE15}) with $U\in \mathcal{D}(\mathcal{A})$ we obtain
\begin{eqnarray}
b\xi \epsilon_{3}\int_{0}^{L}|u^{2}|^{2}dx
+c\epsilon_{3}\int_{0}^{L}|u^{3}|^{2}dx\notag\\
-\frac{d}{2}\int_{0}^{L}\int_{0}^{\infty}\sigma^{\prime}(s)|\kappa_{x}|^{2}dsdx=-\Re(\mathcal{B}U,U)_{\mathcal{H}}&\leq& |\lambda^{-1}|\|F\|_{\mathcal{H}}\|U\|_{\mathcal{H}}.
\end{eqnarray}
Since $F\rightarrow 0$ in $\mathcal{H}$,  then we get
\begin{equation*}
\int_{0}^{L}|u^{2}|^{2}dx=o(\lambda^{-2}), \;\;\; \int_{0}^{L}|u^{3}|^{2}dx=o(\lambda^{-2}), \;\;\; \int_{0}^{L}\int_{0}^{\infty}\sigma^{\prime}(s)|\kappa_{x}|^{2}dsdx=o(\lambda^{-2}).
\end{equation*}
Using the hypothesis $(\mathcal{\mathbf{H}})$ in this later we conclude
\begin{equation*}
\int_{0}^{L}\int_{0}^{\infty}\sigma(s)|\kappa_{x}|^{2}dsdx=o(\lambda^{-2}).
\end{equation*}
Hence the result.
\end{proof}
\begin{lem}
For $m=1$, assume that the hypothesis $(\mathcal{\mathbf{H}})$ holds. Then for a solution \\ $U=(v,z,u^{1},u^{2},u^{3},w,\kappa)^\top \in \mathcal{D}(\mathcal{A})$ we have
\begin{equation*}
\int_{0}^{L}|w_{x}|^{2}=o(\lambda^{-2}),\;\;\; \int_{0}^{L}|w|^{2}=o(\lambda^{-2}).
\end{equation*}
\end{lem}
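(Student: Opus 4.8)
The plan is to recover $w$ from the history variable $\kappa$ by means of the Dafermos equation (\ref{SEC9}) together with the dissipation supplied by Lemma \ref{LEMMA10}. First I would differentiate (\ref{SEC9}) in $x$, giving $i\lambda\kappa_{x}+\kappa_{xs}-w_{x}=\lambda^{-2}f^{7}_{x}$, then multiply by $\sigma(s)$ and integrate over $s\in(0,\infty)$. Since $\kappa(\cdot,0)=0$, hence $\kappa_{x}(\cdot,0)=0$, and $\sigma$ decays exponentially under $(\mathrm{\mathbf{H}})$, an integration by parts in $s$ replaces $\int_{0}^{\infty}\sigma(s)\kappa_{xs}(s)\,ds$ by $-\int_{0}^{\infty}\sigma'(s)\kappa_{x}(s)\,ds$, whence
\begin{equation*}
g(0)\,w_{x}=i\lambda\int_{0}^{\infty}\sigma(s)\kappa_{x}(s)\,ds-\int_{0}^{\infty}\sigma'(s)\kappa_{x}(s)\,ds-\lambda^{-2}\int_{0}^{\infty}\sigma(s)f^{7}_{x}(s)\,ds .
\end{equation*}

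Next I would estimate the $L^{2}(0,L)$-norm of each term on the right-hand side. Cauchy--Schwarz in $s$, with weights $\sigma$ and $|\sigma'|$ respectively, gives
\begin{equation*}
\Big\|\int_{0}^{\infty}\sigma\kappa_{x}\,ds\Big\|^{2}\le g(0)\int_{0}^{L}\int_{0}^{\infty}\sigma|\kappa_{x}|^{2}\,ds\,dx,\qquad \Big\|\int_{0}^{\infty}\sigma'\kappa_{x}\,ds\Big\|^{2}\le \sigma(0)\int_{0}^{L}\int_{0}^{\infty}|\sigma'||\kappa_{x}|^{2}\,ds\,dx,
\end{equation*}
while the $f^{7}$-term is bounded by $\|F\|_{\mathcal{H}}$ through the definition of the norm of $W$. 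By Lemma \ref{LEMMA10} both of these $|\kappa_{x}|^{2}$-integrals are $o(\lambda^{-2})$ --- this order being exactly what the $\lambda^{2}$-scaling $\lambda^{2}(i\lambda I-\mathcal{B})U=F$ in (\ref{SE15}) produces --- so, using the Dafermos inequality $\sigma'\le-d_{\sigma}\sigma$ to pass from the $\sigma$- to the $|\sigma'|$-weighted dissipation, and equation (\ref{SEC8}), which expresses $i\lambda w$ through $\int_{0}^{\infty}\sigma\kappa_{xx}\,ds$ and $z_{x}$, to handle the factor $\lambda$ in front of the memory integral, one obtains $\int_{0}^{L}|w_{x}|^{2}\,dx=o(\lambda^{-2})$. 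Since $w\in H^{1}_{0}(0,L)$, Poincaré's inequality then gives $\int_{0}^{L}|w|^{2}\,dx\le C_{P}\int_{0}^{L}|w_{x}|^{2}\,dx=o(\lambda^{-2})$, which is the claim.

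The step I expect to be the main obstacle is precisely the control of $i\lambda\int_{0}^{\infty}\sigma(s)\kappa_{x}(s)\,ds$: because of the explicit factor $\lambda$, a crude use of Lemma \ref{LEMMA10} only yields $o(1)$, so extracting the sharp order $o(\lambda^{-2})$ forces one to combine the $\sigma$-weighted dissipation with equation (\ref{SEC8}) and with the $|\sigma'|$-weighted dissipation through the Dafermos condition --- the same mechanism as in Lemma~4.4 of \cite{2} and, structurally, in Lemma \ref{LEMMA10} above. A secondary but routine point is absorbing the $\sigma$-weighted tail of the $f^{7}$ contribution, which is harmless since $\|F\|_{\mathcal{H}}\to0$.
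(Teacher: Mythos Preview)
Your identity $g(0)\,w_{x}=i\lambda\int_{0}^{\infty}\sigma\kappa_{x}\,ds-\int_{0}^{\infty}\sigma'\kappa_{x}\,ds-\lambda^{-2}\int_{0}^{\infty}\sigma f^{7}_{x}\,ds$ is correct and is the standard device here; the paper itself gives no details and simply defers to Lemma~4.9 of \cite{2}, so your route is in the same spirit. The gap is at the point you yourself flag. Using Lemma~\ref{LEMMA10} and Cauchy--Schwarz, the term $i\lambda\int\sigma\kappa_{x}\,ds$ has $L^{2}(0,L)$-norm at most $|\lambda|\,g(0)^{1/2}\big(\int_{0}^{L}\!\int_{0}^{\infty}\sigma|\kappa_{x}|^{2}\big)^{1/2}=|\lambda|\cdot o(\lambda^{-1})=o(1)$, so the identity yields only $\int_{0}^{L}|w_{x}|^{2}\,dx=o(1)$, not $o(\lambda^{-2})$.

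Your proposed remedy via (\ref{SEC8}) does not close this gap. Equation (\ref{SEC8}) expresses $i\lambda w$ --- not $i\lambda\int\sigma\kappa_{x}$ --- through $d\int\sigma\kappa_{xx}$ and $az_{x}$. If one multiplies your identity by $\bar w_{x}$, integrates by parts in $x$ and substitutes (\ref{SEC8}), the result is $g(0)\|w_{x}\|^{2}-\tfrac{\lambda^{2}}{d}\|w\|^{2}$ on the left, where the new term has the wrong sign and dominates for large $|\lambda|$, while the right picks up $\tfrac{ia\lambda}{d}\int z\,\bar w_{x}$ with $\|z\|$ only $O(1)$ at this stage of the argument. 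The Dafermos inequality alone cannot recover the missing power either: solving (\ref{SEC9}) explicitly gives $\kappa_{x}(s)=w_{x}(1-e^{-i\lambda s})/(i\lambda)+O(\lambda^{-2})$, whence $\int\!\!\int\sigma|\kappa_{x}|^{2}\sim c\lambda^{-2}\|w_{x}\|^{2}$ with $c>0$, which together with Lemma~\ref{LEMMA10} again returns $\|w_{x}\|^{2}=o(1)$. That weaker bound is in fact all that the downstream Lemmas~\ref{LEMMA12} and \ref{LEMMA11} actually use (only $|\lambda^{-1}\!\int w_{x}\bar z|=o(\lambda^{-1})$ and $\|w\|=o(1)$ are needed), so the overall contradiction survives; but as written you have not established the stated order $o(\lambda^{-2})$, and the mechanism you invoke from (\ref{SEC8}) does not produce it.
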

\begin{proof}
The proof of this lemma follows similar steps as given as in Lemma $4.9.$ (Akil in \cite{2})
\end{proof}
\begin{lem}\label{LEMMA12}
For $m=1$, assume that the hypothesis $(\mathcal{\mathbf{H}})$ holds. Then for \\$U=(v,z,u^{1},u^{2},u^{3},w,\kappa)^\top \in \mathcal{D}(\mathcal{A})$ we have
\begin{equation*}
\int_{0}^{L}|v_{x}|^{2}=o(\lambda^{-1}). 
\end{equation*}
\end{lem}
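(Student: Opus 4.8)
The plan is to transcribe the argument of Lemma~\ref{CA11} (which estimated $\|v_x\|$ in the exponential, damped case) to the present setting, being careful to track the powers of $\lambda$ so that the forcing terms come out with the factor $\lambda^{-2}$ and are then killed by the compatibility relation satisfied by $F$. Throughout we are in the contradiction setup of $(\mathrm{\mathbf{P2}})$, with the equations $(\ref{SEC3})$--$(\ref{SEC9})$, $\|U\|_{\mathcal H}=1$, $|\lambda|\to\infty$, $F\to0$ in $\mathcal H$, and we use that Lemma~\ref{LEMMA10} already gives $\|u^{3}\|=o(\lambda^{-1})$.

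Concretely, I would first read off $z_x=i\lambda v_x-\lambda^{-2}f^{1}_x$ from $(\ref{SEC3})$ and insert it into $(\ref{SEC7})$; multiplying the resulting identity by $i\lambda^{-1}\overline{v_x}$ and integrating over $(0,L)$ produces a term $\frac{\gamma}{\epsilon_3}\int_0^L|v_x|^2\,dx$. Then I would use the compatibility condition $\xi u^{2}_x-u^{3}+\frac{\gamma}{\epsilon_3}v_x=0$ inside the resulting term $\int_0^L u^{3}\overline{v_x}\,dx$; this cancels the $|v_x|^2$–term and leaves $-\xi\int_0^L u^{2}_x\overline{v_x}\,dx$ plus terms carrying a factor $\lambda^{-1}$ or $\lambda^{-3}$. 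Next, differentiating $(\ref{SEC6})$ in $x$, multiplying by $-i\xi\lambda^{-1}\overline{v_x}$, integrating, and adding to the previous identity makes the $u^{1}_x$– and $u^{2}_x$–integrals cancel exactly (no boundary terms arise). A last use of $\xi u^{2}_x=u^{3}-\frac{\gamma}{\epsilon_3}v_x$ in the surviving $u^{2}_x$–integral reintroduces a $|v_x|^2$–term, now with coefficient $\sim ib\lambda^{-1}$. Dividing by $ib\lambda^{-1}$ (legitimate since $b\neq0$, as the electric field is damped), the forcing terms assemble into $\frac{\lambda^{-2}}{b}\int_0^L\bigl(\xi f^{4}_x-f^{5}+\frac{\gamma}{\epsilon_3}f^{1}_x\bigr)\overline{v_x}\,dx$, which vanishes because $F\in\mathcal H$ satisfies $\xi f^{4}_x-f^{5}+\frac{\gamma}{\epsilon_3}f^{1}_x=0$. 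One is left with
\begin{equation*}
\frac{\gamma}{\epsilon_3}\int_0^L|v_x|^2\,dx=\frac{b-c}{b}\int_0^L u^{3}\,\overline{v_x}\,dx .
\end{equation*}
To conclude, I would bound the right-hand side by Cauchy--Schwarz, $\bigl|\int_0^L u^{3}\overline{v_x}\,dx\bigr|\le\|u^{3}\|\,\|v_x\|\le\frac{1}{\sqrt{\alpha}}\|u^{3}\|$, using $\sqrt{\alpha}\|v_x\|\le\|U\|_{\mathcal H}=1$; since $\|u^{3}\|=o(\lambda^{-1})$ by Lemma~\ref{LEMMA10}, this gives $\int_0^L|v_x|^2\,dx=o(\lambda^{-1})$, which is the claim.

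The main obstacle is the bookkeeping of the powers of $\lambda$ combined with the observation that the individual forcing contributions — in particular the one containing $f^{4}_x$, whose $L^2$–norm is \emph{not} controlled by $\|F\|_{\mathcal H}$ — are harmless only after being combined and annihilated by the compatibility identity built into the space $\mathcal H$. It is exactly this cancellation, rather than any crude smallness bound, that reduces the right-hand side to the single term $\int_0^L u^{3}\overline{v_x}\,dx$ and thereby produces the sharp rate $o(\lambda^{-1})$; getting one sign or $\lambda$–power wrong would either spoil the cancellation or leave an uncontrolled term, so the computation has to be carried out with care.
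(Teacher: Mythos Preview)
Your proposal is correct and follows essentially the same route as the paper: insert (\ref{SEC3}) into (\ref{SEC7}), multiply by $i\lambda^{-1}\overline{v_x}$, use the compatibility condition $\xi u^{2}_x-u^{3}+\tfrac{\gamma}{\epsilon_3}v_x=0$, combine with the differentiated equation (\ref{SEC6}) tested against $-i\xi\lambda^{-1}\overline{v_x}$, and then invoke the compatibility relation $\xi f^{4}_x-f^{5}+\tfrac{\gamma}{\epsilon_3}f^{1}_x=0$ for $F\in\mathcal H$ to kill the forcing terms, arriving at $b\tfrac{\gamma}{\epsilon_3}\int_0^L|v_x|^2\,dx=(b-c)\int_0^L u^{3}\overline{v_x}\,dx$; the conclusion then follows from Cauchy--Schwarz and Lemma~\ref{LEMMA10}. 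Your observation that the $f^{4}_x$--term is not individually controlled by $\|F\|_{\mathcal H}$ and must be annihilated by the compatibility identity is exactly the point, and the paper uses it in the same way.
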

\begin{proof}
Inserting the equation (\ref{SEC3}) in (\ref{SEC7}), we get
\begin{equation}
  i\lambda u^{3}+\frac{\mu}{\epsilon_{3}}u^{1}_{x}-i\lambda \frac{\gamma}{\epsilon_{3}}v_{x}+\lambda^{-2}\frac{\gamma}{\epsilon_{3}}f^{1}_{x}+cu^{3}=\lambda^{-2}f^{5}. \label{CA12}
\end{equation}
Multiplying the result (\ref{CA12}) by $i\lambda^{-1}\bar{v_{x}}$, we integrate over $(0,L)$ we obtain
\begin{eqnarray}
  -\int_{0}^{L}u^{3}v_{x}dx+i\frac{\mu}{\lambda \epsilon_{3}}\int_{0}^{L}u^{1}_{x}v_{x}dx+\frac{\gamma}{\epsilon_{3}}\int_{0}^{L}|v_{x}|^{2}dx\\ \notag
  +i\lambda^{-3}\frac{\gamma}{\epsilon_{3}}\int_{0}^{L}f^{1}_{x}v_{x}dx+\frac{i}{\lambda}c\int_{0}^{L}u^{3}v_{x}dx
  &=&i\lambda^{-3}\int_{0}^{L}f^{5}v_{x}dx. \label{SEC13}
\end{eqnarray}
Using the compatibility condition $\xi u^{2}_{x}-u^{3}+\frac{\gamma}{\epsilon_{3}}v_{x}=0$  in the first integral of  (\ref{CA13}), we write
\begin{eqnarray}
  -\xi\int_{0}^{L}u^{2}_{x}\bar{v_{x}}dx+i\frac{\mu}{\lambda \epsilon_{3}}\int_{0}^{L}u^{1}_{x}v_{x}dx \notag\\
  +i\lambda^{-3}\frac{\gamma}{\epsilon_{3}}\int_{0}^{L}f^{1}v_{x}dx+\frac{i}{\lambda}c\int_{0}^{L}u^{3}v_{x}dx
  &=&i\lambda^{-3}\int_{0}^{L}f^{5}v_{x}dx.
  \label{SEC14}
\end{eqnarray}
We have from (\ref{SEC6}), the following equation
\begin{equation*}
i\lambda u^{2}_{x}+\frac{\mu}{\xi\epsilon_{3}}u^{1}_{x}+bu^{2}_{x}=\lambda^{-2}f_{x}^{4},
\end{equation*}
multiplying this later by $-i\xi \lambda^{-1}\bar{v_{x}}$, we integrate over $(0,L)$ then summing with (\ref{SEC14}),  one has
\begin{eqnarray*}
-b\xi\int_{0}^{L}u^{2}_{x}v_{x}dx+c\int_{0}^{L}u^{3}v_{x}dx+\lambda^{-2}\frac{\gamma}{\epsilon_{3}}\int_{0}^{L}f^{1}v_{x}dx
&=&\lambda^{-2}\int_{0}^{L}f^{5}v_{x}dx\\
&&-\lambda^{-2}\xi\int_{0}^{L}f^{4}_{x}v_{x}dx.
\end{eqnarray*}
Using again the compatibility condition $\xi u^{2}_{x}=u^{3}-\frac{\gamma}{\epsilon_{3}}v_{x}$, one gets
\begin{eqnarray}\label{SEC15}
b \frac{\gamma}{\epsilon_{3}}\int_{0}^{L}|v_{x}|^{2}dx&=&(b-c)\int_{0}^{L}u^{3}v_{x}dx
-\frac{\gamma}{\epsilon_{3}}\int_{0}^{L}f^{1}_{x}v_{x}dx\\
&&-\int_{0}^{L}f^{5}v_{x}dx-\xi \int_{0}^{L}f_{x}^{4}v_{x}dx.
\end{eqnarray}
We use the compatibility condition for $F\in \mathcal{H}$, 
$\xi f^{4}_{x}-f^{5}+\frac{\gamma}{\epsilon_{3}}f^{1}_{x}=0$,
 in (\ref{SEC15}), to get
\begin{equation}
b \dfrac{\gamma}{\epsilon_{3}}\int_{0}^{L}|v_{x}|^{2}dx=(b-c)\int_{0}^{L}u^{3}v_{x}dx,
\end{equation}
hence by using Cauchy-Schwartz inequality and lemma \ref{LEMMA10}, we conclude our  result.
\end{proof}
\begin{lem}\label{LEMMA11}
For $m=1$, assume that the hypothesis $(\mathcal{\mathbf{H}})$ holds. Then for $U \in \mathcal{D}(\mathcal{A})$ we have
\begin{equation*}
\int_{0}^{L}|z|^{2}=o(1), \;\;\; \int_{0}^{L}|u^{1}|^{2}dx=o(1).
\end{equation*}
\end{lem}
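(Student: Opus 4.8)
The plan is to prove the two bounds separately, adapting the arguments of Lemma~\ref{LEMMA3} and Lemma~\ref{LEMMA4} while keeping careful track of the extra negative powers of $\lambda$ introduced by the scaling in (\ref{SE15}). For $\int_0^L|z|^2dx$, I would multiply (\ref{SEC4}) by $-\rho i\lambda^{-1}\bar z$, integrate over $(0,L)$, and integrate by parts in $\int_0^L v_{xx}\bar z\,dx$ and $\int_0^L u^3_x\bar z\,dx$; the boundary contributions collapse to $i\lambda^{-1}\left[(\alpha v_x+\gamma u^3)\bar z\right]_0^L$, which vanishes because $z(0)=0$ (since $z\in H^1_L(0,L)$) and $\alpha v_x(L)+\gamma u^3(L)=0$ in $\mathcal{D}(\mathcal{B})$. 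Substituting $\bar z_x=-i\lambda\bar v_x-\lambda^{-2}\overline{f^1_x}$ from (\ref{SEC3}) then yields
\[
\rho\int_0^L|z|^2dx=\alpha\int_0^L|v_x|^2dx+\gamma\int_0^L u^3\bar v_x\,dx+i\lambda^{-1}a\int_0^L w_x\bar z\,dx-i\lambda^{-3}\!\int_0^L\!\big(\alpha v_x+\gamma u^3\big)\overline{f^1_x}\,dx-i\lambda^{-3}\rho\int_0^L f^2\bar z\,dx .
\]
Here $\alpha\|v_x\|^2=o(\lambda^{-1})$ by Lemma~\ref{LEMMA12}; the term $\gamma\int_0^L u^3\bar v_x\,dx$ is controlled by $\gamma\|u^3\|\,\|v_x\|=o(\lambda^{-1})\cdot O(1)$ using Lemma~\ref{LEMMA10} and $\sqrt\alpha\,\|v_x\|\le\|U\|_{\mathcal H}=1$; and each remaining term carries a factor $|\lambda^{-1}|$ or $|\lambda^{-3}|$ together with $\|F\|_{\mathcal H}\to0$ or the bound $\|w_x\|=o(\lambda^{-1})$. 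Thus every term is $o(1)$ and $\int_0^L|z|^2dx=o(1)$.

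For $\int_0^L|u^1|^2dx$, I would multiply (\ref{SEC6}) by $\bar u^1$ and (\ref{SEC5}) by $\bar u^2$, integrate over $(0,L)$, integrate by parts in $\int_0^L u^3_x\bar u^2\,dx$ (its boundary term vanishes since $u^2\in H^1_0(0,L)$), add the two identities and take real parts. The terms $i\lambda\int_0^L u^2\bar u^1\,dx$ and $i\lambda\int_0^L u^1\bar u^2\,dx$ are complex conjugates of one another, so their sum is purely imaginary and drops out, leaving
\[
\frac{\mu}{\xi\epsilon_3}\int_0^L|u^1|^2dx=\int_0^L|u^2|^2dx-\Re\Big(b\int_0^L u^2\bar u^1\,dx\Big)+\Re\Big(\int_0^L u^3\,\overline{u^2_x}\,dx\Big)+\lambda^{-2}\Re\Big(\int_0^L\big(f^4\bar u^1+f^3\bar u^2\big)\,dx\Big).
\]
Then I would use the compatibility relation $\xi u^2_x=u^3-\frac{\gamma}{\epsilon_3}v_x$ to write $\int_0^L u^3\,\overline{u^2_x}\,dx=\frac1\xi\int_0^L|u^3|^2dx-\frac{\gamma}{\xi\epsilon_3}\int_0^L u^3\bar v_x\,dx$, and estimate by Cauchy--Schwarz: $\|u^2\|^2$ and $\|u^3\|^2$ are $o(\lambda^{-2})$ by Lemma~\ref{LEMMA10}, while $\|u^1\|\le\mu^{-1/2}$ and $\|v_x\|\le\alpha^{-1/2}$ follow from $\|U\|_{\mathcal H}=1$, and the last pair carries $|\lambda^{-2}|\,\|F\|_{\mathcal H}$; hence every term is $o(1)$ and $\int_0^L|u^1|^2dx=o(1)$.

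The computations are routine; the one point requiring genuine care is that the derivative-order quantities $v_{xx}$ and $u^2_x$ are not controlled by $\|U\|_{\mathcal H}$ and must be eliminated before any crude estimate is applied — $v_{xx}$ through the integration by parts combined with (\ref{SEC3}), and $u^2_x$ through the compatibility condition. Combined with Lemma~\ref{LEMMA10}, the lemma bounding $\|w_x\|$ and $\|w\|$, and Lemma~\ref{LEMMA12}, the two estimates of this lemma give $\|U\|_{\mathcal H}^2=o(1)$, which contradicts $\|U\|_{\mathcal H}=1$; this establishes $(\mathrm{\mathbf{P2}})$ and, together with $(\mathrm{\mathbf{P1}})$, completes the proof of Theorem~\ref{THEOREM5} via the Borichev--Tomilov theorem.
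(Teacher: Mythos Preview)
Your proposal is correct and follows essentially the same route as the paper: for $\|z\|^2$ you multiply (\ref{SEC4}) by $-i\rho\lambda^{-1}\bar z$, integrate by parts, and substitute via (\ref{SEC3}); for $\|u^1\|^2$ you multiply (\ref{SEC6}) by $\bar u^1$ and (\ref{SEC5}) by $\bar u^2$, add, and take real parts --- exactly the identities the paper derives. Your treatment is in fact slightly more careful than the paper's, since you make explicit why the boundary terms vanish and use the compatibility condition to justify the bound on $\int_0^L u^3\,\overline{u^2_x}\,dx$, which the paper states without detail.
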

\begin{proof}
Multiplying (\ref{SEC4}) by $-\rho i\lambda^{-1}\bar{z}$, integrating over $(0,L)$,
\begin{eqnarray*}
  \rho \int_{0}^{L}|z|^{2}dx+i\lambda^{-1}\alpha \int_{0}^{L}v_{xx}z dx+i \lambda^{-1}\gamma\int_{0}^{L}u_{x}^{3}\bar{z}dx\\
 -i\lambda^{-1}a\int_{0}^{L}w_{x}z dx&=&-i\lambda^{-3}\rho \int_{0}^{L}f^{2}zdx,
\end{eqnarray*}
it follows
\begin{eqnarray}
\rho \int_{0}^{L}|z|^{2}dx&=&i\lambda^{-1}\alpha \int_{0}^{L}v_{x}z_{x}dx+i \lambda^{-1}\gamma\int_{0}^{L}u^{3}\bar{z_{x}}dx \notag\\
&&+i\lambda^{-1}a\int_{0}^{L}w_{x}z dx
-i\lambda^{-3}\rho \int_{0}^{L}f^{2}zdx.\label{CA16}
\end{eqnarray}
Using (\ref{SEC3}) in (\ref{CA16}), we obtain
\begin{eqnarray}
  \rho \int_{0}^{L}|z|^{2}dx &=& \alpha \int_{0}^{L}|v_{x}|^{2}dx-i\lambda^{-3}\alpha \int_{0}^{L}v_{x}f_{x}^{1}dx+\gamma\int_{0}^{L}u^{3}v_{x}dx\notag\\
  &&-i\gamma \lambda^{-3}\int_{0}^{L}u^{3}f_{x}^{1}dx
  +ia\lambda^{-1}\int_{0}^{L}w_{x}zdx\notag\\
  &&-i\lambda^{-3}\rho \int_{0}^{L}f^{2}zdx. \label{SEC10}
\end{eqnarray}
We estimate easily using Cauchy-Schwartz inequality, Lemma \ref{LEMMA10}, lemma \ref{LEMMA12} and lemma \ref{LEMMA11} as follow
\begin{eqnarray*}
\alpha|\lambda^{-3}|\left| \int_{0}^{1}v_{x}f_{x}^{1}dx\right|&=&o(\lambda^{-3}), \;\;\; \left|\gamma\int_{0}^{L}u^{3}v_{x}dx\right|=o(\lambda^{-2}),\\ \left|\gamma\int_{0}^{L}u^{3}f_{x}^{1}dx\right|&=&o(\lambda^{-3}), \;\;\; \left|a\lambda^{-1}\int_{0}^{L}w_{x}zdx\right|=o(\lambda^{-1}),\\
\left|\lambda^{-3}\rho \int_{0}^{L}f^{2}zdx\right|&=&o(\lambda^{-3}).
\end{eqnarray*}
Inserting these previous estimations in (\ref{SEC10}), we obtain the first result
\begin{equation*}
  \int_{0}^{L}|z|^{2}dx=o(1).
\end{equation*}\\
 Now, Multiplying (\ref{SEC6}) by $\bar{u_{1}}$, then integrating over $(0,L)$, we get
\begin{equation}\label{A123}
  i\lambda \int_{0}^{L}u^{2}\bar{u^{1}}dx+\dfrac{\mu}{\xi \epsilon_{3}}\int_{0}^{L}|u^{1}|^{2}dx+b \int_{0}^{L}u^{2}\bar{u^{1}}dx=\int_{0}^{L}f^{4}\bar{u^{1}}dx.
\end{equation}
Multiplying (\ref{SEC5}) by $\bar{u^{2}}$ integrating over $(0,L)$, we get
\begin{equation}\label{A1234}
  i\lambda \int_{0}^{L}u^{1}\bar{u^{2}}dx-\int_{0}^{L}|u^{2}|^{2}dx-\int_{0}^{L}u^{3}\bar{u^{2}_{x}}dx=\int_{0}^{L}f^{3}\bar{u^{2}}dx.
\end{equation}
Summing the results (\ref{A123}), (\ref{A1234}) then taking the real part of the two identities, we obtain
\begin{eqnarray}\label{SEC19}
\dfrac{\mu}{\xi \epsilon_{3}}\int_{0}^{L}|u^{1}|^{2}dx&=&\int_{0}^{L}|u^{2}|^{2}dx-\Re{\left(b\int_{0}^{L}u^{2}\bar{u^{1}}dx\right)}+
\Re{\left(\int_{0}^{L}u^{3}\bar{u_{x}^{2}}dx\right)}\\
&&+\Re{\left(\int_{0}^{L}f^{4}\bar{u^{1}}dx\right)}+\Re{\left(\int_{0}^{L}f^{3}\bar{u^{2}}dx\right)}.
\end{eqnarray}
We estimate as follow
\begin{eqnarray*}
\left|\Re{\left(b\int_{0}^{L}u^{2}\bar{u^{1}}dx\right)}\right|&=&o(\lambda^{-1}), \;\;\; \left|\Re{\left(\int_{0}^{L}u^{3}\bar{u_{x}^{2}}dx\right)}\right|=o(\lambda^{-1}),\\
\left|\Re{\left(\int_{0}^{L}f^{4}\bar{u^{1}}dx\right)}\right|&=&o(1),\;\;\; \left|\Re{\left(\int_{0}^{L}f^{3}\bar{u^{2}}dx\right)}\right|
=o(\lambda^{-1}).
\end{eqnarray*}
Inserting the previous estimations and lemma \ref{LEMMA10} in  (\ref{SEC19}), we obtain 
\begin{equation*}
  \int_{0}^{L}|u^{1}|^{2}dx=o(1).
\end{equation*}
\end{proof}
\begin{proof}[Proof of theorem \ref{THEOREM5}]
From lemma \ref{LEMMA10}, lemma \ref{LEMMA12} and lemma \ref{LEMMA11}, we obtain $\|U_{n}\|_{\mathcal{H}}=o(1)$ which is a contraduction with the hypothesis  $\|U_{n}\|_{\mathcal{H}}=1$, thus the proof is completed.
\end{proof}
\section{Conlusion}
In this paper, we have established the stabilization of a piezoelectric beam with magnetic effect with Coleman-Gurtin or Gurtin-Pipkin law and under Lorenz gauge conditions, several cases are studied, we summarize the results in the following table:
We remark that there is no need to control the electrical fields to achieve an exponential decay rate of solution when a Coleman-Gurtin thermal law is considered. A polynomial decay of rate $t^{-1}$  with Gurtin-Pipkin thermal law since the electrical field components are damped is established. The optimality of the polynomial rate of decay is still an open problem.  

\Addresses

\end{document}